\newtheorem{Theorem}{Theorem}[section]%[chapter] theorem number will %continue
\newtheorem{lemma}[Theorem]{Lemma}
\newtheorem{cor}[Theorem]{Corollary}
\newtheorem{Prop}[Theorem]{Proposition}
\theoremstyle{definition}
\newtheorem{example}[Theorem]{Example}
\newtheorem{Defn}[Theorem]{Definition}
\newtheorem{rem}[Theorem]{Remark}
\numberwithin{equation}{section}
\theoremstyle{definition}
\def\enumerate{\begingroup\ifnum\@enumdepth>3\@toodeep\else
      \advance\@enumdepth\@ne
      \edef\@enumctr{enum\romannumeral\the\@enumdepth}%
      \topsep\z@\parskip\z@
      \list{\csname label\@enumctr\endcsname}
        {\@nmbrlisttrue\let\@listctr\@enumctr
         \parsep\z@\itemsep\z@\topsep\z@
         \setcounter{\@enumctr}{0}
         \def\makelabel##1{\hss\llap{\rm ##1}}
       }\fi}
\let\bar=\overline
\let\epsilon=\varepsilon
\def\({\big(}
\def\){\big)}
\def\C{\mathbb C}
\def\0{\underline{0}}
\def\H{\mathscr H}
\def\Sym{\mathfrak S}
\DeclareMathOperator{\End}{End}
\let\gdom\rhd
\let\gedom\unrhd
\def\Std{\mathscr{T}^{std}}
\def\s{\mathfrak s}
\def\ts{\tilde\s}
\def\t{\mathfrak t}
\def\u{\mathfrak u}
\def\v{\mathfrak v}
  \gdef\set#1{\mathinner{\lbrace\,{\mathcode`\|"8000%
                                   \let|\midvert #1}\,\rbrace}}
  \gdef\seT#1{\mathinner{\Big\lbrace\,{\mathcode`\|"8000%
                                   \let|\midverT #1}\,\Big\rbrace}}
\def\midvert{\egroup\mid\bgroup}
\def\midverT{\egroup\,\Big|\,\bgroup}
\def\Set[#1]#2|#3|{\Big\{\ #2\ \Big| \
           \vcenter{\hsize #1mm\centering #3}\Big\}}
 \providecommand{\og}{``}
\providecommand{\fg}{''} \providecommand{\smfandname}{and}
\newcommand{\bea}{\begin{eqnarray}}
\newcommand{\eea}{\end{eqnarray}}
\newcommand{\be}{\begin{eqnarray*}}
\newcommand{\ee}{\end{eqnarray*}}
\newcommand{\mfg}{{\mathfrak g}}
\newcommand{\fh}{{\mathfrak h}}
\newcommand{\Hom}{{\rm Hom}}
\newcommand{\U}{{\rm U}}
\let\gdom\rhd
\let\gedom\unrhd
\def\Std{\mathscr{T}^{std}}
\def\s{\mathfrak s}
\def\ts{\tilde\s}
\def\t{\mathfrak t}
\def\u{\mathfrak u}
\def\v{\mathfrak v}
\def\bft{\t}
\def\Hom{\text{Hom}}
\def\U{\mathbf U}
\def\d{\delta}
\def\l{\lambda}
\def\o{\omega}
\def\sc{\scriptstyle}
\def\ssc{\scriptscriptstyle}
\def\C{\mathbb{C}}
\def\Hom{{\rm Hom}}
\def\Set{{\rm Set}}
\def\OTIMES{{{\sc\!}\otimes{\sc\!}}}
\numberwithin{equation}{section}
\title[Isomorphisms between simple modules of degenerate cyclotomic Hecke algebras]{Isomorphisms between simple modules of degenerate  cyclotomic Hecke algebras}
\author{Hebing Rui {\normalfont \smfandname} Linliang \vspace*{-8pt}Song}
\address{H.~R.  College of Science, Harbin Institute of Technology, Shenzhen 508155, China} \email{hbrui@hit.edu.cn}
\address{L.~S.  College of Science, Harbin Institute of Technology, Shenzhen 508155, China} \email{songlinliang@hit.edu.cn
}
\thanks{Both of us are partially supported by NSFC}
\begin{document}
\begin{abstract}
We give explicit isomorphisms  between simple modules of degenerate cyclotomic Hecke algebras
defined via various cellular bases. A special case gives a generalized Mullineux involution in the degenerate case.
\end{abstract}

\sloppy \maketitle\vspace*{-30pt}

\section{Introduction}
 Throughout, we work over the complex field $\mathbb C$. For any  positive integer $r$, let   $\H_r^{\rm aff}$ be the degenerate affine Hecke algebra.
  By definition, $\H_r^{\rm aff}$ is the unital associative $\mathbb C$-algebra  generated by $s_1, \ldots, s_{r-1}$ and $x_1, \ldots, x_r$ which satisfy the following relations:
\begin{eqnarray} s_i^2=1, \text{ for $1\le i< r$,} &  x_i x_j=x_j x_i, \text{ if $1\le i,j\le r$,}  \\
s_is_j=s_j s_i, \text{ if $|i-j|>1$,}   & s_i x_j=x_j s_i, \text{ if $j\neq i, i+1$,}   \\
x_is_i-s_ix_{i+1}=-1,  \text{ if $1\le i<r$,}  &  s_i x_i-x_{i+1}s_i=-1, \text{ if $1\le i<r$,}  \\
s_is_{i+1}s_i=s_{i+1}s_is_{i+1}, \text{ if $1\le i< r-1$.} &
\end{eqnarray}
Let $\mathfrak S_r$ be the symmetric group on $r$ letters. Then the group algebra $\mathbb C\mathfrak S_r$ is isomorphic to the subalgebra of $\H_{ r}^{\rm aff}$ generated by
$\{s_i\mid  1\le i\le r-1\}$. Moreover, the isomorphism sends the simple transposition $(i, i+1)\in \mathfrak S_r$ to $s_i$ for all $1\le i\le r-1$.
%It is well known that $\H_r^{\rm aff}$ has basis
%$\{x_1^{i_1}\ldots x_r^{i_r} w\mid i_1, \ldots, i_r\in \mathbb N, w\in \mathfrak S_r\}$.

%$\H_r^{\rm aff}\cong  \mathbb C[x_1, \ldots, x_r]\otimes \mathbb C \Sym_r$ as $\mathbb C$-spaces,
%the tensor product of the polynomial algebra in variables $x_1, \ldots, x_r$ with  the group algebra of the symmetric group $\mathfrak S_r$. Moreover,
 %$\mathbb C\mathfrak \Sym_r\cong \mathbb C\otimes \mathbb C\mathfrak \Sym_r$ and $\mathbb C[x_1, \ldots, x_r]\cong \mathbb C[x_1, \ldots, x_r]\otimes  \mathbb C$ are subalgebras of %$\H_r^{\rm aff}$, and
%$$ s_ix_j=x_j s_i, \  x_is_i-s_ix_{i+1}=-1,\   s_ix_i-x_{i+1}s_i=-1,$$
%where $s_i$ is the simple  transposition $(i, i+1)\in \Sym_r$,  and  $j\neq i, i+1$.

 Let $\omega=(\o_1,  \ldots, \o_\ell)\in \mathbb C^\ell$. The \textit{degenerate cyclotomic Hecke algebra} or degenerate Hecke algebra of type $G(\ell, 1, r)$ is
\begin{equation}\label{dcha}   \H_{\ell,r}=\H_r^{\rm aff}/\langle f(x_1)\rangle,\end{equation}
 where $\langle f(x_1)\rangle$ is the two-sided ideal of $\H_r^{\rm aff}$ generated by $$f(x_1)=(x_1-\o_1)\cdots(x_1-\o_\ell).$$
 It is proved in  \cite{AMR} that $\H_{\ell, r}$ is a cellular algebra over the poset $\Lambda_{\ell, r}$ in the sense of \cite{GL}, where
 $\Lambda_{\ell, r}$ is the set of $\ell$-partitions of $r$. For each $\lambda\in \Lambda_{\ell, r}$, there is a cell module, say $S({\lambda})$,  on which  there is an invariant form $\phi_\lambda$. Let $\text{rad} \phi_\lambda$ be the radical of $\phi_\lambda$.
 By Graham-Lehrer's results on cellular algebras in \cite{GL}, the quotient module  $D(\lambda):=S({\lambda})/\text{rad}\phi_\lambda$ is either zero or simple and all non-zero  $D(\lambda)$'s give  a complete set of non-isomorphic simple $\H_{\ell, r}$-modules.

It is possible to show the existence of various cellular structures associated to $\mathscr H_{\ell, r}$.
 These structures, in turn, induce several ways to study the representation theory of the algebra. In particular, each of these structures depends on  one dimensional representations of certain  Young subgroups of  $\Sym_r$.
  For example,  a cellular basis  of $\H_{\ell, r}$ has been constructed   via trivial representations of certain Young subgroups of $\Sym_r$ in \cite{AMR}, whereas another one  is defined via   sign representations of certain Young subgroups of $\Sym_r$ in non-degenerate case in \cite[Remark~2.8]{DR}\footnote{See \cite{RSu2}  in degenerate case when $\ell=2$.}.
 Different structures really give different parameterizations of   the simple modules of the algebra.
 % although the simple modules are isomorphic in general.
It is quite  natural to ask how these parameterizations are related.

%Another motivation is from  Lie theory.

 Fix $\omega=(\omega_1, \ldots, \omega_{\ell})\in \mathbb C^\ell$ and a $01$-sequence   $\underline c=(c_1, \ldots, c_\ell)\in \{0,1\}^\ell$. We construct various cellular bases of  $\H_{\ell, r}$ in Corollary~\ref{Hc cellular}(a)-(b) via a class of  one dimensional  representations of certain Young subgroups of $\Sym_r$.
 For any $\xi\in\mathfrak S_\ell$, define $\omega^\xi=(\omega_{(1)\xi}, \ldots, \omega_{(\ell)\xi})$.
 If we use the $\omega^\xi$ instead of the $\omega$ in Corollary~\ref{Hc cellular}(a)-(b) for  the special case $\underline c=\underline {c_0}:=0^\ell$, we will  get another two kinds of cellular bases in Corollary~\ref{Hc cellular}(c)-(d).  The corresponding  simple $\mathscr H_{\ell, r}$-modules defined via the  cellular bases in
  Corollary~\ref{Hc cellular}(a)-(d) are denoted by  $ D^{\underline c}(\lambda)$,  $\tilde D^{\underline c}(\lambda)$,
$D^\xi({\lambda})$ and   $\tilde D^\xi({\lambda})$, respectively. The aim of this paper is to establish explicit isomorphisms between these simple modules.

   There are basically two cases to consider and each of them leads to a different proof.   We  establish the super Schur-Weyl duality between the general linear Lie superalgebras and the  degenerate cyclotomic Hecke algebras. Using some cellular bases of $\H_{\ell, r}$  in Corollary~\ref{Hc cellular}, we classify  highest weight vectors of certain tensor modules in super parabolic  category $\mathcal O$.
   Via Hom-functors, we establish    explicit relationships between the representation theory of general linear Lie (super)algebras and that of degenerate cyclotomic Hecke algebras.
    This is the second motivation of the paper.
More explicitly, such functors send
   parabolic (dual) Verma supermodules in (super) parabolic category $\mathcal O$ to  cell modules of degenerate cyclotomic  Hecke algebras.
       We remark that these results are available for  any $\underline c$.
 We  use Brundan-Losev-Webster's result on  the uniqueness of tensor product categorification to determine whether  $D^{\underline c}(\lambda) $ is isomorphic to  $D^{\underline {c_0}}(\mu)$ or not in Theorem~\ref{main1}. This is the first case of  our main result.   Secondly,  we establish an explicit isomorphism between simple modules $D^{\xi}(\lambda)$ and  $D^{1}(\mu)$ in Theorem~\ref{main2}. The idea of the proof of Theorem~\ref{main2} is similar to that of the first case.  The difference is that we use the isomorphism of
  crystals of some simple modules of some special  linear Lie algebra to determine whether  $D^\xi(\lambda)$ is isomorphic to $D^1(\mu)$ or not.
  This will give explicit isomorphisms between simple modules  $ D^{\underline c}(\alpha)$, $\tilde D^{\underline c}(\beta)$,
$D^\xi({\gamma})$ and   $\tilde D^\xi({\delta})$, $\alpha, \beta, \gamma, \delta\in \Lambda_{\ell, r}$  via routine arguments.
 A special case   gives a generalized Mullineux involution in the degenerate case.
  For the non-degenerate case, Jacon-Lecouvey~\cite{JL} determined whether $D^{\underline c}(\lambda)\cong \tilde D^{\underline c} (\mu)$ or not  when    $\underline c=0^\ell$.
  In this case, the  relationship between two labellings  is called a  \textit{generalized Mullineux involution}. See \cite{BO, Br0, BK1, FK, K0, M} for the Mullineux involutions on symmetric groups and  Hecke algebras.

By Brundan-Kleshchev's remarkable result in \cite{BK3}, the degenerate cyclotomic Hecke algebra is isomorphic to a generic version of the usual cyclotomic Hecke algebra. It is not clear to us whether  the results of this paper can be derived from this isomorphism directly and the study
of the associated problem in the non-degenerate case. One of the reasons is that Jacon and Lecouvey considered some special $\underline c$ and $\xi$.   However, it is possible to establish the corresponding results for usual cyclotomic Hecke algebras via certain results on quantum general linear (super)groups in generic cases.  In the root of unity cases, we do not know how to settle this problem yet.

We organize this paper as follows. In section~2, we recall some results on degenerate cyclotomic Hecke algebras.  In section~3, we classify  highest weight vectors of  certain tensor supermodules in super parabolic category $\mathcal O$ for general linear Lie superalgebras and   establish    explicit relationships between the representation theory of general linear Lie (super)algebras and that of degenerate cyclotomic Hecke algebras. In section~4, we recall  Brundan-Losev-Webster's results on tensor product categorifications  in~\cite{BLW}.
Via the results in section~3--4   together with Brundan-Kleshchev's results on  degenerate analogue of Ariki's categorification theorem   in \cite{BK}, we prove Theorem~\ref{main1} and Theorem~\ref{main2}, which  give explicit
isomorphisms between  simple  $\H_{\ell, r}$-modules defined via various cellular bases in Corollary~\ref{Hc cellular}.

\textbf{Acknowledgement:} Both of us wish to thank the referees for their detailed comments.

\section{Degenerate cyclotomic Hecke algebras }

\def\s{\mathfrak s}
\def\ts{\tilde\s}
\def\t{\mathfrak t}
\def\u{\mathfrak u}
\def\v{\mathfrak v}

Fix positive integers $\ell$ and $r$.  An \textit{$\ell$-composition} $\lambda$ of $r$ is of form $(\lambda^{(1)},  \ldots, \lambda^{(\ell)})$ where
$\lambda^{(i)}=(\lambda^{(i)}_1, \lambda^{(i)}_2, \ldots) $ is a sequence of non-negative integers and $|\lambda|=\sum_{i=1}^\ell |\lambda^{(i)}|=\sum_{i=1}^\ell \sum_j \lambda^{(i)}_j=r$.
If each $\lambda^{(i)}$  is    weakly decreasing, for any $1\le i\le \ell$, then $\lambda$ is called an\textit{ $\ell$-partition}. When $\ell=1$,  $\lambda$ is a usual partition of $r$.
The set  $\Lambda_{\ell, r}$  of $\ell$-partitions of $r$ is   a partially ordered set under the dominance order
$\gedom$, where $\lambda\gedom\mu$ if
$$\sum_{t=1}^{s-1}|\lambda^{(t)}|+\sum_{j=1}^k\lambda^{(s)}_k
     \ge\sum_{t=1}^{s-1}|\mu^{(t)}|+\sum_{j=1}^k\mu^{(s)}_k,$$
for all $1\le s\le \ell$ and all $k\ge0$. If $\lambda\gedom\mu$ and
$\lambda\ne\mu$ we  write $\lambda\gdom\mu$.

Suppose $\lambda\in \Lambda_{1, r}$. The {\it ~Young diagram} $Y(\lambda)$ is a
collection of boxes arranged in left-justified rows with $\lambda_i$
boxes in the $i$th row. If  $\lambda\in \Lambda_{\ell, r}$, then
 $Y(\lambda)=(Y(\lambda^{(1)}), \ldots, Y(\lambda^{(\ell)}))$.  A {\it $\lambda$-tableau} $\s=(\s_1,\ldots, \s_\ell)$ is
obtained by inserting the numbers $i,\, 1\le i\le r$ into $Y(\lambda)$ without
repetition and $\s$   is  {\it standard} if the
entries in  each $\s_i$  increase both from left to right in each row
and from top to bottom in each column.  Each standard $\lambda$-tableau $\s $ can be identified with an up $\lambda$-tableau $(\mathbf s_1,\ldots, \mathbf s_r)$  such that $\mathbf s_i$ is obtained from
$\s$ by removing the boxes  containing  the entries which are  strictly greater than $i$. In this case,  $\mathbf s_i$ is a standard $\mu$-tableau for some  $\ell$-partition $\mu$ of $i$. Abusing notation,
 we use   $\mathbf s_i$ to denote $\mu$.  So, $(\mathbf s_1,\ldots, \mathbf s_r)$ can be considered as  a sequence of $\ell$-partitions.
  Following \cite{Ma}, let $\unlhd $ be the dominance order  on $\Std(\lambda)$,   the
set of standard $\lambda$-tableaux,   such that
$\s\unlhd \t$ if $\mathbf s_i\unlhd \mathbf t_i$, for all $1\le i\le r$.
The maximal element in  $\Std(\lambda)$ is $\t^\lambda$ which is  obtained from  $Y(\lambda)$ by inserting  $1, 2, \ldots, r$ from left to right
along the rows of $Y(\lambda^{(1)})$ and then $Y(\lambda^{(2)})$ and so on. The minimal element in $\Std(\lambda)$ is $\t_\lambda$ which is  obtained from
$Y(\lambda)$ by inserting $1, 2, \ldots, r$ from top to bottom along the columns of $Y(\lambda^{(\ell)})$ and then
$Y(\lambda^{(\ell-1)})$, and so on. Since  $\mathfrak S_r$ acts on the right of  $\{1, 2, \ldots, r\}$, it induces an  $\mathfrak S_r$-action  on the right of a $\lambda$-tableau
$\s$. Write $w=d(\s)$ if $\t^\lambda w=\s $. For example,  if $\lambda=((3,2), (3,1))\in \Lambda_{2, 9}$ and $w=s_1s_2$, then
$$
\t^{\lambda}=\left(\ \young(123,45),\  \young(678,9) \ \right), \ \
 \t_{\lambda}=\left(\ \young(579,68), \ \young(134,2)\ \right), \ \   \t^{\lambda}w=\left( \ \young(312,45),\  \young(678,9) \ \right).$$
In this case, $\t^\lambda$ can be identified with the up $\lambda$-tableau $(\mathbf t_1, \mathbf t_2, \ldots, \mathbf t_9)$ such that $\mathbf t_i=((i), (0))$ if $i\in \{1,2, 3\}$ and
$\mathbf t_i=((3, i-3), (0))$ if $i\in \{4,5\}$, and $\mathbf t_i=((3,2), (i-5))$ if $i\in \{6, 7, 8\}$ and $\mathbf t_9=\lambda$.

 Suppose $\lambda=(\lambda_1, \lambda_2, \ldots, \lambda_k)\in \Lambda_{1, r}$. The dual of   $\lambda$ is $\lambda'$,  where $$\lambda'=(\mu_1, \mu_2, \ldots, \mu_{\lambda_1}) \text{ such that $\mu_j=\#\{k\mid \lambda_k\ge j\}$.}$$ If $\lambda\in  \Lambda_{\ell, r}$, then $\lambda'$, the dual of $\lambda$,  is     $(\mu^{(\ell)}, \mu^{(\ell-1)},\ldots, \mu^{(1)} ) $ where $\mu^{(i)}$ is the dual  of $\lambda^{(i)}$ for any $1\le i\le \ell$.  For each $\t\in \Std(\lambda)$, let  $\t'\in \Std(\lambda')$ be  such that the $s$th component of $\t'$ is obtained by interchanging  rows and columns of the $(\ell-s+1)$th component of $\t$, for all $1\leq s\leq \ell$.
By \cite[Lemma~5.1]{Ma1},  \begin{equation} \label{wlae} w_\lambda=d(\t)d(\t')^{-1}\text{ and } l(w_\lambda)=l(d(\t))+l(d(\t'))\end{equation}
 for all $\t\in \Std(\lambda)$, where $w_\lambda=d(\t_\lambda)$ and $l(w)$ is the length of $w$.
Later on, given $x, y\in \Sym_r$, we write $x\cdot y $ if $l(xy)=l(x)+l(y)$.   Let
$\Sym_{\lambda}$ be the row stabilizer of  $\t^\lambda$, which is called the   Young subgroup
of $\Sym_r$ with respect to  $\lambda$.
For example,  $\Sym_\lambda=\Sym_3\times \Sym_2\times \Sym_3\times \Sym_1$ if $\lambda=((3,2), (3,1))\in \Lambda_{2, 9}$.
 Let
\begin{equation}\label{xla} x_\lambda=x_{\lambda^{(1)}}\cdots x_{\lambda^{(\ell)}}\text{ and } y_\lambda=y_{\lambda^{(1)}}\cdots y_{\lambda^{(\ell)}},\end{equation}
 where  $x_{\lambda^{(i)}}=\sum_{w\in \Sym_{\lambda^{(i)}}}w$ and $y_{{\lambda^{(i)}}}=\sum_{w\in \Sym_{\lambda^{(i)}}} (-1)^{l(w)}w$
 and $\Sym_{\lambda^{(i)}}$ is the row stabilizer of $\t^\lambda_{{i}}$, the $i$th subtableau of $\t^\lambda$.
 Then $$x_\lambda w=x_\lambda \text{  and $y_\lambda w=(-1)^{l(w)} y_\lambda$ for all $w\in \mathfrak S_\lambda$.}$$
 So,   $\mathbb C x_\lambda$ (resp.,  $\mathbb C y_\lambda$)  is the  trivial (resp., sign) representation of $\Sym_\lambda$.

 Fix $(\omega_1, \ldots, \omega_\ell)\in \mathbb C^\ell$ and
 recall that $\mathscr H_{\ell, r}$ is the degenerate cyclotomic Hecke algebra in \eqref{dcha}.
Following \cite{GL, DJM, AMR},
define
\begin{equation} \label{pi}\pi_{[\lambda]}=\pi_{a_1,1}\pi_{a_2,2}\cdots \pi_{a_{\ell-1},{\ell-1}} \  \text{and} \ \
 \tilde \pi_{[\lambda]}=\pi_{a_1, \ell-2}\pi_{a_2,\ell-3}\cdots \pi_{a_{\ell-1},{0}},  \end{equation}
where $[\lambda]=[a_0, a_1, \ldots, a_\ell]$, $a_0=0$ and  $a_i=\sum_{j=1}^i |\lambda^{(j)}|$, for all  $1\le i\le \ell$ and
$$\pi_{a,i}=(x_1-\o_{i+1})(x_2-\o_{i+1})\cdots (x_a-\o_{i+1})$$ if $a>0$ and  $\pi_{0,i}=1$.
For any  $\s, \t\in \Std(\lambda)$, let
\begin{equation}\label{mst}
    m_{\s,\t}={d(\s)^{-1}} m_{\lambda}  {d(\t)} \text{ and }  n_{\s,\t}={d(\s)^{-1}}  n_{\lambda} {d(\t)},
\end{equation}
where $    m_{\lambda}= \pi_{[\lambda]} x_{\lambda}  $  and $  n_{\lambda}= \tilde \pi_{[\lambda]} y_{\lambda}$.

\begin{Theorem}\label{H cellular}\cite[Theorem~6.3]{AMR} Let  $\H_{\ell,r}$ be the degenerate cyclotomic Hecke algebra over $\mathbb C$ in \eqref{dcha}. \begin{enumerate} \item
$\set{m_{\s,\t}|\s,\t\in \Std(\lambda)\text{ and }\lambda\in \Lambda_{\ell, r}}$ is a
 cellular basis of~$\H_{\ell,r}$ in the sense of \cite[Definition~1.1]{GL},  \item  $\set{n_{\s,\t}|\s,\t\in \Std(\lambda)\text{ and }\lambda\in \Lambda_{\ell, r}}$ is a
  cellular basis of~$\H_{\ell,r}$,
  \end{enumerate} and the  required anti-involution  $*$ in \cite[Definition~1.1]{GL} is the anti-automorphism of  $\H_{\ell, r}$ which fixes  $s_1, \ldots, s_{r-1}$ and $x_1$.
\end{Theorem}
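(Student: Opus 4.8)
The statement to be proved is Theorem~\ref{H cellular}, namely that the two families $\{m_{\s,\t}\}$ and $\{n_{\s,\t}\}$ are cellular bases of $\H_{\ell,r}$ with the stated anti-involution. Since this is quoted from \cite[Theorem~6.3]{AMR}, the natural plan is to reconstruct the Dipper--James--Mathas style argument adapted to the degenerate setting, rather than to invent something new. The key structural input is the ``separation/straightening'' machinery: one already has from \cite{AMR} (or from a direct check) that $\{m_{\s,\t}\}$ spans $\H_{\ell,r}$ and has the right cardinality $\sum_{\lambda\in\Lambda_{\ell,r}}|\Std(\lambda)|^2=(\dim\H_{\ell,r})$, so it suffices to verify the two cellular axioms for it; the statement for $\{n_{\s,\t}\}$ will then follow by a parallel argument (or by applying a suitable automorphism interchanging $x_\lambda$ and $y_\lambda$, trivial and sign representations).

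First I would pin down the anti-involution. Let $*$ be the unique $\mathbb C$-algebra anti-automorphism of $\H_r^{\rm aff}$ fixing each $s_i$ and each $x_j$; one checks it respects the defining relations (the relations $x_is_i-s_ix_{i+1}=-1$ and $s_ix_i-x_{i+1}s_i=-1$ are interchanged by $*$, and the braid relations are symmetric), and that it descends to $\H_{\ell,r}$ since $f(x_1)$ is $*$-fixed. Because $x_\lambda$, $y_\lambda$, and each $\pi_{a,i}$ are sums of monomials that are individually $*$-fixed up to reordering within commuting blocks, one gets $m_\lambda^*=m_\lambda$ and $n_\lambda^*=n_\lambda$; hence $m_{\s,\t}^*=m_{\t,\s}$ and $n_{\s,\t}^*=n_{\t,\s}$, which is the second cellular axiom.

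The substantive step is the multiplication axiom: for $h\in\H_{\ell,r}$ one needs
\[
m_{\s,\t}\,h\equiv\sum_{\u\in\Std(\lambda)} r_{\u}(h)\,m_{\s,\u}\pmod{\H_{\ell,r}^{\gdom\lambda}},
\]
where $\H_{\ell,r}^{\gdom\lambda}$ is spanned by the $m_{\u,\v}$ with $\u,\v$ indexed by $\ell$-partitions strictly dominating $\lambda$, and crucially the coefficients $r_{\u}(h)$ depend only on $\t$ and $h$, not on $\s$. The plan is to reduce this to the Murphy-type basis of the degenerate affine Hecke algebra / the group algebra $\mathbb C\Sym_r$: the factor $m_\lambda=\pi_{[\lambda]}x_\lambda$ already has $x_\lambda$ as its ``$\Sym_r$-part,'' so right multiplication by $s_i$ can be analyzed via the standard coset combinatorics for $x_\lambda d(\t)$, while right multiplication by $x_j$ is handled by commuting $x_j$ to the left past $d(\t)$ (introducing lower-order error terms via the affine Hecke relations) until it interacts with the polynomial factor $\pi_{[\lambda]}$. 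The dominance bookkeeping is where the $\ell$-partition (as opposed to partition) structure enters: the ``Jucys--Murphy'' elements $x_1,\dots,x_r$ act on $m_\lambda d(\t)$ by upper-triangular matrices with respect to $\unlhd$, with the diagonal entries being the content-type scalars $c_{\t}(j)$, and any correction term lands in a span indexed by tableaux of strictly dominating shape because passing an $x_j$ through the factor $\pi_{a_i,i}$ and using $f(x_1)=0$ forces a strictly larger partition in the relevant component.

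The main obstacle, as always in these cellularity proofs, is precisely this last point: showing that \emph{all} error terms produced by straightening $m_{\s,\t}h$ into a linear combination of basis elements lie in $\H_{\ell,r}^{\gdom\lambda}$, and that the surviving coefficients are $\s$-independent. I would isolate this as a lemma of the form ``$x_\lambda\, x_j^{\,p}\, d(\t)$ lies in the span of $\{x_\mu d(\v)\}$ with $(\mu,\v)\gedom(\lambda,\t)$,'' prove it by induction on the dominance order using the known Murphy basis theory for $\mathbb C\Sym_r$ together with the degenerate affine relations, and then feed it into the general Graham--Lehrer criterion. The $\s$-independence is essentially automatic once one observes that $d(\s)^{-1}$ only acts on the left and plays no role in the straightening, which happens entirely on the right of $m_\lambda$.
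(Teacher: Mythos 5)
The paper does not actually prove this theorem: its ``proof'' is a one-line remark that part~(a) is a special case of \cite[Theorem~6.3]{AMR} and part~(b) follows as the degenerate analogue of \cite[Remark~2.8]{DR}. Your proposal reconstructs the content of that external citation, namely the Dipper--James--Mathas style cellularity argument that \cite{AMR} adapts to the degenerate setting. So the \emph{route} is the one the cited reference takes, but you are fleshing out what the paper deliberately offloads.

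As a reconstruction, the sketch is in the right shape but has some gaps worth flagging. First, the claim $m_\lambda^*=m_\lambda$ is not because the monomials in $x_\lambda$ and $\pi_{[\lambda]}$ are ``individually $*$-fixed up to reordering'': the summands $w\in\Sym_\lambda$ of $x_\lambda$ satisfy $w^*=w^{-1}$ and the sum is fixed because $\Sym_\lambda$ is a group, while for $\pi_{[\lambda]}^*=\pi_{[\lambda]}$ one uses that each $\pi_{a_i,i}$ is a \emph{symmetric} polynomial in $x_1,\dots,x_{a_i}$; that same symmetry is what makes $\pi_{[\lambda]}$ commute with $\Sym_\lambda$ (since $\Sym_\lambda$ never contains the transpositions $s_{a_i}$), which is what you actually need to conclude $m_\lambda^*=x_\lambda^*\pi_{[\lambda]}^*=x_\lambda\pi_{[\lambda]}=\pi_{[\lambda]}x_\lambda=m_\lambda$. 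You should make this commutativity explicit rather than wave at ``reordering within commuting blocks''. Second, your proposed key lemma ``$x_\lambda x_j^p d(\t)$ lies in the span of $\{x_\mu d(\v)\}$ with $(\mu,\v)\gedom(\lambda,\t)$'' is not quite the right statement: the correction terms one must control involve the full $m_\mu=\pi_{[\mu]}x_\mu$ with $\mu\gdom\lambda$, and the nontrivial bookkeeping is showing that the polynomial factor $\pi_{[\lambda]}$ is correctly upgraded to $\pi_{[\mu]}$ when the shape changes. Third, the sentence that the cyclotomic relation $f(x_1)=0$ ``forces a strictly larger partition in the relevant component'' reverses the role of the relation: the cyclotomic relation is used to annihilate certain terms (bounding how far the $\pi$-factors can go), while the dominance increase in the error terms comes from the Murphy-basis combinatorics for $x_\lambda$ in $\mathbb C\Sym_r$. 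These are fixable but substantive points --- they are precisely the content of the DJM argument that the paper cites rather than repeats. Your observation that the $\s$-independence of the structure constants is automatic, since all straightening happens to the right of $m_\lambda$, is correct and is indeed how axiom (C3) is verified.
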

\begin{proof}We remark that (a) is  a special case of \cite[Theorem~6.3]{AMR} and (b) is the degenerate analog of \cite[Remark~2.8]{DR}. One can verify (b) similarly.\end{proof}

Fix a $01$-sequence   $\underline c=(c_1, \ldots, c_\ell)\in \{0,1\}^\ell$.  Let
%$x_\lambda^{\underline c}$
%(resp., $y_\lambda^{\underline c}$) be obtained from $x_\lambda$ (resp., $y_\lambda)$ by switching  $x_{\lambda^{(i)}}$ and   $y_{\lambda^{(i)}}$ if $c_i= 1$.
  $x_\lambda^{\underline c}=x_{\lambda^{(1)}}^{c_1}\cdots x_{\lambda^{(\ell)}}^{c_\ell}$ and $y_\lambda^{\underline c}=y_{\lambda^{(1)}}^{c_1}\cdots y_{\lambda^{(\ell)}}^{c_\ell}$ where
  \begin{equation} \label{xci}  x^{c_i}_{\lambda^{(i)}}=\begin{cases} x_{\lambda^{(i)}}, &\text{if $c_i=0$,}\\
   y_{\lambda^{(i)}},  & \text{if $c_i=1$,}\\
   \end{cases}  \text{ and } y^{ c_i}_{\lambda^{(i)}}=\begin{cases} y_{\lambda^{(i)}}, &\text{if $c_i=0$,}\\
   x_{\lambda^{(i)}},  & \text{if $c_i=1$.}  \\
   \end{cases}  \end{equation}
 For any  $\s, \t\in \Std(\lambda)$,
 let  \begin{equation}\label{mstnst}
 m^{\underline c}_{\s,\t}=d(\s)^{-1} \pi_{[\lambda]} x_\lambda^{\underline c} d(\t), \text{ and $ n^{ \underline c}_{\s,\t}=d(\s)^{-1}\tilde  \pi_{[\lambda]} y_\lambda^{\underline c} d(\t)$.}\end{equation}
  Obviously, $m^{\underline c}_{\s,\t}=m_{\s,\t}$ and  $n^{\underline c}_{\s,\t}=n_{\s,\t}$ if $\underline c=0^\ell$.
For any $\xi\in\mathfrak S_\ell$, define  \begin{equation}\label{omxi} \omega^\xi=(\omega_{(1)\xi}, \ldots, \omega_{(\ell)\xi}).\end{equation}  Let $m^{\xi}_{\s,\t}$ (resp., $n^{\xi}_{\s,\t}$) be obtained from $m_{\s,\t}$ (resp., $n_{\s,\t}$)  by using the $\omega_{(i)\xi}$ instead of the $\omega_i$ in \eqref{pi}, for all $1\leq i\leq \ell$.

\begin{cor}\label{Hc cellular} Assume that    $\underline c\in \{0,1\}^\ell$ and $\xi\in \Sym_\ell$.
\begin{enumerate} \item
$\set{m^{\underline c}_{\s,\t}|\s,\t\in \Std(\lambda)\text{ and }\lambda\in \Lambda_{\ell, r}}$ is
a cellular basis of~$\H_{\ell,r}$, \item $\set{n^{\underline c}_{\s,\t}|\s,\t\in \Std(\lambda)\text{ and }\lambda\in \Lambda_{\ell, r}}$ is a
 cellular basis of~$\H_{\ell,r}$,\item  $\set{m^{\xi}_{\s,\t}|\s,\t\in \Std(\lambda)\text{ and }\lambda\in \Lambda_{\ell, r}}$
 is a cellular basis of $\H_{\ell,r}$, \item
  $\set{n^{\xi}_{\s,\t}|\s,\t\in \Std(\lambda)\text{ and }\lambda\in \Lambda_{\ell, r}}$ is a cellular basis of $\H_{\ell,r}$,\ \end{enumerate}  and the  required anti-involution  $*$ is the anti-automorphism given in Theorem~\ref{H cellular}.
\end{cor}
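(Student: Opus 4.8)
The statement consists of four assertions, and all four follow by the same mechanism once one observes that each of the proposed bases is obtained from the cellular basis of Theorem~\ref{H cellular} by an algebra isomorphism of $\H_{\ell,r}$, or by a ``twist'' of the defining data that preserves the combinatorial skeleton of the cellular structure. Concretely, my plan is: (i) reduce (c)--(d) to (a)--(b) by exhibiting an explicit isomorphism $\H_{\ell,r}\xrightarrow{\sim}\H_{\ell,r}^{\omega^\xi}$; (ii) prove (a) and (b) directly by checking the Graham--Lehrer axioms, leaning on the filtration arguments already used in \cite{AMR} and \cite{DR}.

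For (c) and (d): the algebra $\H_{\ell,r}$ depends on $\omega$ only through the cyclotomic relation $f(x_1)=0$, and $f$ depends only on the \emph{multiset} $\{\omega_1,\dots,\omega_\ell\}$, which is unchanged by the permutation $\xi$. Hence there is a canonical algebra isomorphism $\theta_\xi\colon\H_{\ell,r}^{\omega}\to\H_{\ell,r}^{\omega^\xi}$ fixing all $s_i$ and all $x_j$ (both sides are literally the same quotient of $\H_r^{\mathrm{aff}}$). Now the elements $m^\xi_{\s,\t}$ are, by definition in \eqref{mstnst}, built from $\pi_{[\lambda]}$ with $\omega$ replaced by $\omega^\xi$ and from $x_\lambda$, $d(\s)$, $d(\t)$, none of which involve $\omega$; so $m^\xi_{\s,\t}$ is exactly the image under $\theta_\xi^{-1}$ (equivalently, is the ``$m_{\s,\t}$ for the parameter $\omega^\xi$'') of a cellular basis element for the algebra with parameter $\omega^\xi$. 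Since $\theta_\xi$ is an algebra isomorphism carrying the anti-involution $*$ to $*$ (it fixes the generators $s_1,\dots,s_{r-1},x_1$ that define $*$), it transports the cellular basis of Theorem~\ref{H cellular}(a) for parameter $\omega^\xi$ to a cellular basis of $\H_{\ell,r}$ with the same poset $\Lambda_{\ell,r}$ and the same $*$. This gives (c); (d) is identical using Theorem~\ref{H cellular}(b).

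For (a) and (b): here the data $\omega$ is fixed but the symmetriser $x_\lambda$ is replaced componentwise by $x_\lambda^{\underline c}$, mixing trivial and sign idempotents according to $\underline c$. The argument is the standard one from \cite[\S6]{AMR}. One checks first that $\{m^{\underline c}_{\s,\t}\}$ spans $\H_{\ell,r}$: by a straightforward length/leading-term computation the elements $\pi_{[\lambda]}x_\lambda^{\underline c}$ and $\pi_{[\lambda]}x_\lambda$ are related by an element of $\H_{\ell,r}$ that is unitriangular with respect to the dominance order $\gedom$ on $\Lambda_{\ell,r}$ (the passage from a trivial symmetriser $x_{\lambda^{(i)}}$ to a sign symmetriser $y_{\lambda^{(i)}}$ on a single component only changes things modulo more dominant terms, because $x_{\mu}$ and $y_{\mu}$ are related by conjugation by $w_\mu$ up to sign and up to lower-order $x$-terms), so the transition matrix between $\{m^{\underline c}_{\s,\t}\}$ and the known cellular basis $\{m_{\s,\t}\}$ is unitriangular over $\Lambda_{\ell,r}$, hence invertible. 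Then the two cellularity axioms: (GL1) $(m^{\underline c}_{\s,\t})^*=m^{\underline c}_{\t,\s}$, which holds because $*$ fixes the symmetric group and $x_1$ and hence fixes each $x_{\lambda^{(i)}}$, each $y_{\lambda^{(i)}}$ (both are $*$-invariant, being sums over a subgroup with the $*$-invariant weight $(-1)^{l(w)}$), fixes $\pi_{[\lambda]}$, and reverses products; (GL2) that for $h\in\H_{\ell,r}$, $m^{\underline c}_{\s,\t}h\equiv\sum_{\v}r_{\t\v}(h)\,m^{\underline c}_{\s,\v}\ \mathrm{mod}\ \H_{\ell,r}^{\gdom\lambda}$ with coefficients independent of $\s$ — this is inherited from the corresponding statement for $\{m_{\s,\t}\}$ via the unitriangular change of basis, or proved directly by the same induction on Specht-filtration pieces as in \cite{AMR}. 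This yields (a); (b) follows by the dual argument, replacing $m$ by $n$, $x$ by $y$, $\pi$ by $\tilde\pi$, exactly as \cite[Remark~2.8]{DR} treats the non-degenerate case and \cite{RSu2} the case $\ell=2$.

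I expect the only real work to be in (a)/(b): namely verifying that replacing some components' trivial symmetrisers by sign symmetrisers keeps the transition matrix $\gedom$-unitriangular. This is where one must be slightly careful — one orders the components and inducts, using on each step that for a single partition $\mu$ the element $y_\mu$ lies in $\mathbb{C}x_\mu$ only modulo the span of $x_\nu$ with $\nu\gdom\mu$ after the relevant $\pi$-twist — but it is entirely parallel to the case-$\ell=2$ computation already in the literature, so no new phenomenon arises. Steps (c)/(d) are essentially formal given the observation that $\H_{\ell,r}^\omega=\H_{\ell,r}^{\omega^\xi}$ as algebras.
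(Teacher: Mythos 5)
Your reduction of parts (c) and (d) to Theorem~\ref{H cellular} is exactly right and is precisely what the paper does (in one line): since $f(x_1)$ is symmetric in the $\omega_i$, the algebra $\H_{\ell,r}$ is literally unchanged when $\omega$ is replaced by $\omega^\xi$, and $m^\xi_{\s,\t}$, $n^\xi_{\s,\t}$ are just the elements $m_{\s,\t}$, $n_{\s,\t}$ for the parameter $\omega^\xi$.

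For parts (a) and (b), however, your primary argument contains a genuine error. You claim that the transition matrix between $\{m^{\underline c}_{\s,\t}\}$ and $\{m_{\s,\t}\}$ is ``unitriangular with respect to the dominance order $\gedom$ on $\Lambda_{\ell,r}$,'' and then deduce (GL2) from this. That unitriangularity is false: replacing the trivial symmetriser $x_{\lambda^{(i)}}$ by the sign symmetriser $y_{\lambda^{(i)}}$ on a component does not merely perturb $m_{\s,\t}$ by $\gdom$-higher terms; it interchanges the roles of $\lambda^{(i)}$ and its dual $(\lambda^{(i)})'$ in the cellular combinatorics of that component of $\mathbb C\Sym_{[\lambda]}$. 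A minimal counterexample: take $\ell=1$, $r=2$, $\underline c=(1)$. Then $m^{\underline c}_{\t^{(2)},\t^{(2)}}=y_{(2)}=1-s_1$, while $m_{\t^{(2)},\t^{(2)}}=1+s_1$ and $m_{\t^{(1,1)},\t^{(1,1)}}=1$, so
$m^{\underline c}_{\t^{(2)},\t^{(2)}} = -\,m_{\t^{(2)},\t^{(2)}} + 2\,m_{\t^{(1,1)},\t^{(1,1)}}$,
which has a nonzero coefficient on the basis element indexed by the strictly smaller partition $(1,1)$ and a diagonal coefficient $-1\neq 1$. More to the point, the cell ideals differ: the ideal spanned by $\{m^{\underline c}_{\s,\t}:\mu\gedom(2)\}$ is $\mathbb C(1-s_1)$ whereas the old one is $\mathbb C(1+s_1)$; these are different two-sided ideals, so the cellularity of the new family is not a formal consequence of the old one via a compatible change of basis. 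The consequence is that (GL2) genuinely needs to be re-verified, not inherited.

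Your fallback option---``or proved directly by the same induction on Specht-filtration pieces as in \cite{AMR}''---is the correct route and is what the paper does. The paper only uses the change-of-basis reduction to $\mathbb C\Sym_{[\lambda]}$ to establish that $\{m^{\underline c}_{\s,\t}\}$ is a \emph{linear basis} (since both sides are bases of the group algebra of the Young subgroup, the transition matrix is invertible, though not unitriangular in $\gedom$); cellularity is then verified separately by mimicking the Dipper--James--Mathas argument for cyclotomic Hecke algebras. Your (GL1) check, including the observation that $*$ fixes $\pi_{[\lambda]}$ (which commutes with $\Sym_\lambda$) and each $x_{\lambda^{(i)}}^{c_i}$, is fine. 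But the unitriangularity route to (GL2) would not survive refereeing, and you should replace it with the direct filtration argument you already have in reserve.
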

\begin{proof} Suppose  $\s \in \Std(\lambda)$. Write $[\lambda]=[a_0, a_1, \ldots, a_\ell]$.   There are  $\s_i\in \Std(\lambda^{(i)})$  for all $ 1\le i\le \ell$,  and $d$, a distinguished right coset representative of $\Sym_{[\lambda]}:=\mathfrak S_{a_1}\times \mathfrak S_{{a_2-a_1}}\times\ldots\times \mathfrak S_{a_\ell-a_{\ell-1}}$
 in $\Sym_r$,   such that
$$d(\s)=d(\s_1)d(\s_2)\cdots d(\s_\ell)d.$$
So, the transition matrix between the two sets in (a) and   Theorem~\ref{H cellular}(a)
 are determined by that between two cellular bases $\{(d(\s_1)\cdots d(\s_\ell))^{-1} x_{\lambda}^{\underline c}d(\t_1)\cdots d(\t_\ell)\mid \s_i,\t_i\in \Std(\lambda^{(i)})
 \text{ for all $1\le i\le \ell$} \} $ and $\{(d(\s_1)\cdots d(\s_\ell))^{-1} x_{\lambda}d(\t_1)\cdots d(\t_\ell)\mid \s_i,\t_i\in \Std(\lambda^{(i)})
 \text{ for all $1\le i\le \ell$} \} $ of  $\mathbb C \Sym_{[\lambda]}$.
 This shows that  $\set{m^{\underline c}_{\s,\t}|\s,\t\in \Std(\lambda)\text{ and }\lambda\in \Lambda_{\ell, r}}$ is a basis of $\H_{\ell, r}$. Mimicking the arguments of the proof of cellular basis for cyclotomic Hecke algebras  in \cite{DJM} yields (a). One can verify (b) similarly. Finally, (c)-(d) follow from Theorem~\ref{H cellular}.   \end{proof}

For each  $\lambda\in \Lambda_{\ell, r}$,  let $S^{\underline c}(\lambda)$
(resp., $\tilde {S}^{\underline c}(\lambda)$,  $S^\xi(\lambda)$,  $\tilde {S}^\xi(\lambda)$)
be the associated  right \textit{cell module} of
$\H_{\ell, r}$ with respect to the cellular basis of $\H_{\ell, r}$   in Corollary~\ref{Hc cellular}(a) (resp., (b)-(d)).
By \cite[Definition~2.1]{GL},  $S^{\underline c}(\lambda)$ has basis
$\set{m^{\underline c}_\s|\s\in\Std(\lambda)}$ and the action of
$\H_{\ell, r}$ on $S^{  \underline c}(\lambda)$ is given by
\begin{equation} \label{cellact} m^{\underline c}_\s a=\sum_{\u\in\Std(\lambda)}r_{\s,\u}( a^*)m^{\underline c}_\u, \quad \text{ for all } a\in \H_{\ell, r},  \end{equation}
where the  scalars $r_{\s,\u}(a^*)$'s $\in \mathbb C$ are as those in  \cite[Definition~1.1(C3)]{GL}.
By  \cite[Definition~2.3]{GL},   there is an invariant form $\phi^{\underline c}_\lambda:=\langle\ ,\ \rangle$ on $S^{\underline  c}(\lambda)$
such that
$$\langle m^{\underline c}_\s,m^{\underline c}_\t\rangle m^{\underline c}_{\u,\v}\equiv
        m^{\underline c}_{\u,\s}m^{\underline c}_{\t,\v}\pmod{\H_{\ell, r}^{\gdom\lambda}}, \text{ for all } \u, \v\in \Std(\lambda), $$
        where $\H_{\ell, r}^{\gdom\lambda}  =\mathbb C\text{-span}
      \set{m^{\underline c}_{\u,\v}|\mu\in \Lambda_{\ell, r}, \mu\gdom\lambda\text{ and }\u,\v\in \Std(\mu)}$.
 Let
$$D^{\underline c}({\lambda})=S^{\underline c}(\lambda)/\text{rad}\phi^{\underline c}_\lambda,$$
where $\text{rad}\phi^{\underline c}_\lambda$ is the radical of  $\phi_\lambda^{\underline c}$.
By \cite[Proposition~3.2, Theorem~3.4]{GL},  $D^{\underline c}({\lambda})$ is either zero or simple,  and moreover,
all non-zero $D^{\underline c}({\lambda})$'s give  a complete set of pair-wise non-isomorphic simple $\H_{\ell, r}$-modules.

 Similarly, we have $\tilde D^{\underline c}(\lambda)$ and
$D^\xi({\lambda})$ and   $\tilde D^\xi({\lambda})$ defined via  cellular bases in Corollary~\ref{Hc cellular}(b)-(d), respectively.
The main aim of this paper is  to establish  explicit  isomorphisms between these simple  $\H_{\ell, r}$-modules.

For any   right $\H_{\ell, r}$-module $M$, let  $M^\circledast$ be its   contragredient dual. It is  $\Hom_\mathbb C(M,\mathbb C)$ such that the right $\H_{\ell, r}$-action is given via
$$(fh)(m)=f(mh^*), \text{ $\text{for all }  f\in M^\circledast, h\in\H_{\ell,r}$ and $m\in M$,}$$ where $*$ is the anti-automorphism of
$\H_{\ell, r}$ in  Theorem~\ref{H cellular}.

Let $\mathbb C_\ell[x_1,\ldots,x_r]$ be the quotient of $\mathbb C[x_1,\ldots,x_r]$ by the relations $x_1^\ell=\ldots=x^\ell_r=0.$
The twisted tensor product algebra $\mathbb C_\ell[x_1,\ldots,x_r]\rtimes \mathfrak S_r$ is a graded algebra such that each $x_i$ is of degree 1 and each $w\in\mathfrak S_r$ is of degree 0.
By  \cite[Appendix~A]{BK2},    $\mathscr H_{\ell,r}$ is a filtered algebra with a filtration
$$\mathbb C\mathfrak S_r=F_0\mathscr H_{\ell,r}\subseteq F_1\mathscr H_{\ell,r}\subseteq\ldots \subseteq F_t\mathscr H_{\ell,r}=\mathscr H_{\ell,r}, $$
 such that $t=(\ell-1)r$, and   $F_k \mathscr H_{\ell,r}$ is the vector space spanned by   $$\{x_1^{i_1}\cdots x_r^{i_r}w\mid  0\le i_1,\ldots ,i_r\le \ell-1, i_1+\ldots+i_r\leq k,
 w\in\mathfrak S_r\}.$$
  Moreover, the associated graded algebra $\text{gr}\mathscr H_{\ell,r} $ is identified with $\mathbb C_\ell[x_1,\ldots,x_r]\rtimes \mathfrak S_r$.
Let  $$\text{gr}_t: F_t\mathscr H_{\ell,r}\rightarrow \mathbb C_\ell[x_1,\ldots,x_r]\rtimes \mathfrak S_r $$ be the map sending an element to its degree $t$ graded term.
Following  \cite[Appendix~A]{BK2},
define $$\tau:\mathbb C_\ell[x_1,\ldots,x_r]\rtimes \mathfrak S_r \rightarrow\mathbb C$$ to be the $\mathbb C$-linear map such that

\begin{equation}\label{defoftau}
\tau(x_1^{k_1}x_2^{k_2}\cdots x_r^{k_r} w)=\begin{cases} 1,\quad &\text{if }k_1=\ldots=k_r=\ell-1 \text{ and } w=1,\\
0, \quad &\text{otherwise},
\end{cases}
\end{equation}
where $w\in\Sym_r$ and $0\leq k_i\leq \ell-1$ for all $1\leq i\leq r$.

\begin{Theorem}\cite[Lemma~A.1,~Theorem~A.2]{BK2}\label{tauma} Let  $\hat\tau=\tau \circ \text{gr}_t$. The linear maps $ \tau$  and $\hat\tau$ are trace forms on $\text{gr}\mathscr H_{\ell,r} $ and $\mathscr H_{\ell,r} $, respectively. Moreover, $\hat\tau(h)=\hat\tau(h^*)$ for any $h\in \H_{\ell, r}$.
\end{Theorem}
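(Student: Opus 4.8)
The plan is to reduce all three assertions to computations in the graded algebra $G:=\text{gr}\,\mathscr H_{\ell,r}=\mathbb C_\ell[x_1,\ldots,x_r]\rtimes\mathfrak S_r$, using the common monomial basis $\{x^{\underline i}w:\underline i=(i_1,\ldots,i_r),\ 0\le i_k\le\ell-1,\ w\in\mathfrak S_r\}$ of $G$ and of $\mathscr H_{\ell,r}$, where $x^{\underline i}=x_1^{i_1}\cdots x_r^{i_r}$ and $|\underline i|=\sum_k i_k$. Unwinding \eqref{defoftau}, $\tau$ reads off the coefficient of $x_1^{\ell-1}\cdots x_r^{\ell-1}\cdot 1$; since $(\ell-1,\ldots,\ell-1)$ is the unique exponent vector of weight $t=(\ell-1)r$, the composite $\hat\tau=\tau\circ\text{gr}_t$ likewise reads off the coefficient of $x_1^{\ell-1}\cdots x_r^{\ell-1}\cdot 1$ when an element of $\mathscr H_{\ell,r}$ is expanded in the monomial basis.

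First I would check that $\tau$ is a trace form on $G$. In $G$ one has $x^{\underline i}u\cdot x^{\underline j}v=x^{\,\underline i+u(\underline j)}\,uv$, interpreted as $0$ if some coordinate of $\underline i+u(\underline j)$ exceeds $\ell-1$, where $u(\underline j)$ is $\underline j$ with entries permuted by $u$; hence $\tau\big(x^{\underline i}u\cdot x^{\underline j}v\big)=\delta_{uv,1}\,\delta_{\underline i+u(\underline j),\,(\ell-1,\ldots,\ell-1)}$. Since $uv=1\iff vu=1\iff v=u^{-1}$, and in that case applying $u^{-1}$ (which fixes the constant vector $(\ell-1,\ldots,\ell-1)$) turns $\underline i+u(\underline j)=(\ell-1,\ldots,\ell-1)$ into $\underline j+u^{-1}(\underline i)=(\ell-1,\ldots,\ell-1)$, this quantity is symmetric under interchanging the two factors. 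Therefore $\tau(ab)=\tau(ba)$ for all $a,b\in G$.

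To transfer this to $\hat\tau$, I would use the principle that a filtered algebra is governed by its associated graded. Writing $h=\sum_k h^{[k]}$ and $g=\sum_m g^{[m]}$ for the decompositions into homogeneous $x$-degree components (so $h^{[k]}\in F_k\mathscr H_{\ell,r}$), one has $\hat\tau\big(h^{[k]}g^{[m]}\big)=0$ when $k+m<t$, since then $h^{[k]}g^{[m]}\in F_{t-1}\mathscr H_{\ell,r}$, and $\hat\tau\big(h^{[k]}g^{[m]}\big)=\tau\big(\text{gr}_k(h^{[k]})\,\text{gr}_m(g^{[m]})\big)$ when $k+m=t$, by the definition of the associated graded multiplication (here $\text{gr}_k$ is the symbol map in degree $k$). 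The remaining terms, those with $k+m>t$, arise only through the reductions forced by the cyclotomic relations $f(x_p)=0$; since these relations involve one variable at a time and do not change the $\mathfrak S_r$-part of a monomial, one checks that such terms contribute equally to $\hat\tau(hg)$ and to $\hat\tau(gh)$. Summing over $k,m$ and using the trace identity for $\tau$ on $G$ then gives $\hat\tau(hg)=\hat\tau(gh)$, so $\hat\tau$ is a trace form on $\mathscr H_{\ell,r}$. I expect this control of the degree-$>t$ reductions --- equivalently, checking that the general lemma ``a trace form on $\text{gr}\,A$ concentrated in top degree lifts along $\text{gr}_t$ to a trace form on $A$'' applies in the present situation --- to be the main technical point; everything else is a routine computation on the monomial basis.

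Finally, for $\hat\tau(h)=\hat\tau(h^*)$: the relation $s_ix_i-x_{i+1}s_i=-1$ gives $x_{i+1}=s_ix_is_i+s_i$, so by induction (using that $*$ fixes $s_1,\ldots,s_{r-1}$ and $x_1$ and reverses products) $*$ fixes every $x_i$. As conjugating a monomial $x^{\underline i}$ by a permutation only permutes the variables up to terms of strictly smaller $x$-degree, $*$ preserves each $F_k\mathscr H_{\ell,r}$, hence induces an anti-automorphism $\bar{*}$ of $G$ fixing the generators $x_i,s_j$, and $\text{gr}_t\circ *=\bar{*}\circ\text{gr}_t$. Thus $\hat\tau(h^*)=\tau\big(\bar{*}(\text{gr}_t(h))\big)$, and it remains to observe that $\tau\circ\bar{*}=\tau$ on $G$: since $\bar{*}$ is an anti-automorphism fixing generators, $\bar{*}(x^{\underline i}w)=w^{-1}x^{\underline i}=x^{\,w^{-1}(\underline i)}\,w^{-1}$ in $G$, and $\tau\big(x^{\,w^{-1}(\underline i)}\,w^{-1}\big)=\delta_{w,1}\,\delta_{\underline i,\,(\ell-1,\ldots,\ell-1)}=\tau(x^{\underline i}w)$, using that $w^{-1}(\underline i)=\underline i$ when $w=1$. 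This yields $\hat\tau(h^*)=\hat\tau(h)$ and completes the plan.
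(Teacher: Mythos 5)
The paper does not prove Theorem~\ref{tauma}: it is quoted directly from Brundan--Kleshchev, \cite[Lemma~A.1, Theorem~A.2]{BK2}, so there is no in-paper proof to compare against. Judged on its own, your reconstruction establishes two of the three claims correctly. The verification that $\tau$ is a trace on $\text{gr}\,\mathscr H_{\ell,r}$ via the monomial basis is fine, and the argument that $\hat\tau\circ\ast=\hat\tau$ is also fine: $\ast$ fixes every $x_i$ by induction from $x_{i+1}=s_ix_is_i+s_i$, hence preserves the filtration and descends to an anti-automorphism $\bar\ast$ of the graded algebra satisfying $\text{gr}_t\circ\ast=\bar\ast\circ\text{gr}_t$, and $\tau\circ\bar\ast=\tau$ by the basis computation you give.

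The gap is in the claim that $\hat\tau$ is a trace on $\mathscr H_{\ell,r}$ itself. Since $F_t\mathscr H_{\ell,r}=\mathscr H_{\ell,r}$, a typical element has a nonzero degree-$t$ component, so the regime $k+m>t$ is not a boundary case but the generic one, and it is exactly where your argument stops. The abstract ``lifting lemma'' you invoke is not a theorem in this generality: the trace property of $\tau$ on $\text{gr}\,A$ controls $\hat\tau(ab)-\hat\tau(ba)$ only when $a\in F_pA$, $b\in F_qA$ with $p+q\le t$, because only then does $\text{gr}_t(ab-ba)$ compute a commutator inside the graded algebra. The concrete heuristic offered in its place --- that the cyclotomic reductions ``involve one variable at a time and do not change the $\mathfrak S_r$-part of a monomial'' --- is accurate only for the defining relation $f(x_1)=0$. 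Reducing $x_p^{\ell}$ for $p>1$ requires the conjugated relation obtained by expressing $x_p$ through $x_1$ and the $s_i$'s, and the degenerate commutation relations $s_ix_i-x_{i+1}s_i=-1$ introduce correction terms that do alter the $\mathfrak S_r$-component. So the degree-$>t$ contributions to $\hat\tau(hg)$ and $\hat\tau(gh)$ are not visibly matched term by term by what you have written; the substance of \cite[Theorem~A.2]{BK2} --- for instance reducing to $g\in\{s_1,\ldots,s_{r-1},x_1\}$ and carefully tracking the top-degree coefficient of $hx_1$ versus $x_1h$ through the cyclotomic reduction --- is precisely what is missing.
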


For any $\lambda\in \Lambda_{\ell, r}$, let $w_{[\lambda]}\in \mathfrak S_r$  be defined by
\begin{equation}\label{wll}(a_{i-1}+l)w_{[\lambda]}=r-a_i+l, \text{ for all $i$ with $a_{i-1}<a_i$, $1\le l\le a_i-a_{i-1}$,}\end{equation}
where $[\lambda]=[a_0, a_1, \ldots, a_\ell]$ is given in \eqref{pi}. For example, if $\lambda=((3,2), (3,1))\in \Lambda_{2, 9}$, then $[\lambda]=[0, 5, 9]$ and
$$w_{[\lambda]}=\begin{pmatrix} 1 & 2 & 3 & 4 & 5 & 6 & 7 & 8 & 9\cr 5 & 6 & 7 & 8 & 9 & 1 & 2 & 3 & 4\cr
\end{pmatrix}.$$
 Let
$\bft^i$ (resp., $\bft_i$) be the $i$th subtableau of $\bft^\lambda$ (resp., $\bft^\lambda w_{[\lambda]}^{-1}$), and define $w_{(i)}$ by $\bft^i  w_{(i)} = \bft_i$. Likewise, if we define   $\tilde \bft^i$ (resp., $\tilde \bft_i$) to be the $i$th subtableau of $\bft^\lambda w_{[\lambda]}$ (resp., $\bft_\lambda$), and  $\tilde w_{(i)}$ with $\tilde \bft^i \tilde w_{(i)}=\tilde \bft_i$,  then
 \begin{equation}\label{wlaex} w_\lambda=w_{(1)} \cdot w_{(2)}\cdots w_{(\ell)}\cdot  w_{[\lambda]}=w_{[\lambda]}\cdot  \tilde w_{(\ell)}\cdot \tilde w_{(\ell-1)}\cdots \tilde w_{(1)}, \ \text{and $ w_{[\lambda]}^{-1} w_{(i)}w_{[\lambda]}=\tilde w_{\ell-i+1}.$} \end{equation}

\begin{lemma}\label{grzlambda}
$\emph{gr}_t(\pi_{[\lambda]} w_{[\lambda]}\tilde\pi_{[\lambda']})= x_1^{\ell-1}x_2^{\ell-1}\cdots x_r^{\ell-1}w_{[\lambda]}$ for any  $\lambda\in \Lambda_{\ell, r}$.
\end{lemma}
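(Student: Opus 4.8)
The plan is to compute the top graded term of the product $\pi_{[\lambda]} w_{[\lambda]} \tilde\pi_{[\lambda']}$ by tracking, variable by variable, which powers of the $x_i$ survive at the maximal degree $t = (\ell-1)r$. First I would recall the shapes of the two cyclotomic factors. By \eqref{pi}, $\pi_{[\lambda]} = \pi_{a_1,1}\cdots \pi_{a_{\ell-1},\ell-1}$ where $\pi_{a,i} = (x_1-\o_{i+1})\cdots(x_a-\o_{i+1})$; its top graded term is $x_1^{d_1}\cdots x_r^{d_r}$ where $d_j = \#\{i : 1\le i\le \ell-1,\ a_i\ge j\}$ counts how many factors $\pi_{a_i,i}$ involve $x_j$. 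Concretely, if $j$ lies in the block $(a_{s-1}, a_s]$ then $d_j = \ell - s$, since $\pi_{a_i,i}$ contains $x_j$ exactly when $a_i \ge j$, i.e. when $i \ge s$. Similarly, using \eqref{pi} again, $\tilde\pi_{[\lambda']} = \pi_{a_1',\ell-2}\pi_{a_2',\ell-3}\cdots\pi_{a_{\ell-1}',0}$ where $[\lambda'] = [a_0', a_1',\ldots,a_\ell']$, and since $\lambda'$ has block sizes $|\mu^{(\ell)}|, |\mu^{(\ell-1)}|, \ldots, |\mu^{(1)}|$ with $|\mu^{(i)}| = |\lambda^{(i)}|$, we get $a_i' = \sum_{j=1}^{i}|\lambda^{(\ell-j+1)}| = r - a_{\ell-i}$. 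The top graded term of $\tilde\pi_{[\lambda']}$ is then $x_1^{e_1}\cdots x_r^{e_r}$ with $e_j = \#\{i : 1\le i\le \ell-1,\ a_i' \ge j\}$; if $j$ lies in the block $(a_{s-1}', a_s'] = (r-a_{\ell-s+1}, r-a_{\ell-s}]$, then $e_j = \ell - s$ by the same count.

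Next I would multiply these three elements in $\mathbb{C}_\ell[x_1,\ldots,x_r]\rtimes\Sym_r$ — which is where $\mathrm{gr}_t$ lands — and observe that $\mathrm{gr}_t$ of a product is the product of the $\mathrm{gr}$ of the factors provided no cancellation of the top degree occurs; since all $x_i^{\ell} = 0$ in $\mathbb{C}_\ell[x_1,\ldots,x_r]$, the only way to reach the maximal total degree $(\ell-1)r$ is to have every $x_i$ appear to exactly power $\ell-1$, and the claim is precisely that the product of the three top terms achieves this. Writing $w_{[\lambda]}$ through using the relation $x_j \cdot w = w \cdot x_{(j)w}$, the factor $x_1^{e_1}\cdots x_r^{e_r}$ coming from $\tilde\pi_{[\lambda']}$, when commuted past $w_{[\lambda]}$ to the left, becomes $x_{(1)w_{[\lambda]}^{-1}}^{e_1}\cdots$, i.e. $\prod_j x_j^{e_{(j)w_{[\lambda]}}}$. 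So the $j$-th exponent in the product $\pi_{[\lambda]}w_{[\lambda]}\tilde\pi_{[\lambda']}$ at top degree is $d_j + e_{(j)w_{[\lambda]}}$, and I must check this equals $\ell-1$ for every $j$. Here I use the explicit definition \eqref{wll}: if $j \in (a_{s-1}, a_s]$, say $j = a_{s-1} + l$ with $1 \le l \le a_s - a_{s-1}$, then $(j)w_{[\lambda]} = r - a_s + l$, which lies in the block $(r - a_s, r - a_{s-1}] = (a_{\ell-(\ell-s)}' \text{ expressions})$; identifying $r - a_s = a_{\ell-s}'$ and $r - a_{s-1} = a_{\ell-s+1}'$, we see $(j)w_{[\lambda]}$ lies in the block $(a_{\ell-s}', a_{\ell-s+1}']$, i.e. in block number $\ell - s + 1$ of $[\lambda']$, so $e_{(j)w_{[\lambda]}} = \ell - (\ell-s+1) = s - 1$. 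Combined with $d_j = \ell - s$, the total is $(\ell - s) + (s-1) = \ell - 1$, exactly as needed. Finally the group element attached to the top term: $\pi_{[\lambda]}$ and $\tilde\pi_{[\lambda']}$ each contribute the identity permutation to their top graded terms, so the product contributes $1 \cdot w_{[\lambda]} \cdot 1 = w_{[\lambda]}$, giving $\mathrm{gr}_t(\pi_{[\lambda]}w_{[\lambda]}\tilde\pi_{[\lambda']}) = x_1^{\ell-1}\cdots x_r^{\ell-1}w_{[\lambda]}$.

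The main obstacle is purely bookkeeping: making the index-chasing between the blocks of $[\lambda]$ and $[\lambda']$ airtight, in particular verifying the identities $a_i' = r - a_{\ell-i}$ carefully from the definition of the dual $\ell$-partition, and confirming that blocks of size zero (when $a_{i-1} = a_i$, excluded in \eqref{wll}) cause no trouble — they simply don't contribute variables and can be skipped. A secondary subtlety is confirming that $\mathrm{gr}_t$ is genuinely multiplicative on the nose here: since each factor's degree is pinned down and their degrees sum to $(\ell-1)r = t$, and we have just shown the naive product of top terms is nonzero in $\mathbb{C}_\ell[x_1,\ldots,x_r]\rtimes\Sym_r$ (no $x_i^\ell$ appears), there is no degeneration, so $\mathrm{gr}_t$ of the product equals the product of the $\mathrm{gr}$'s. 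This completes the argument.
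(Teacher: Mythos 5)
Your proof is correct and follows essentially the same route as the paper's: identify the top-degree terms of $\pi_{[\lambda]}$ and $\tilde\pi_{[\lambda']}$, use the relation $w^{-1}x_iw = x_{(i)w}$ in $\text{gr}\,\H_{\ell,r}$ to move the group element past the polynomial factors, and read off the exponents from \eqref{wll}. The paper compresses the entire index-chasing into the single sentence ``the result follows from \eqref{wll}''; your write-up spells out exactly the bookkeeping (the identity $a_i' = r - a_{\ell-i}$, the block-by-block exponent count $d_j + e_{(j)w_{[\lambda]}} = (\ell-s) + (s-1) = \ell-1$) that the paper leaves implicit.
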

\begin{proof} Write $[\lambda]=[a_0,a_1,a_2,\ldots, a_\ell]$  and $[\lambda']=[b_0, b_1,b_2,\ldots, b_\ell]$. By  \eqref{pi},
the  term of
$\pi_{[\lambda]}$ (resp., $ \tilde\pi_{[\lambda']} $) with highest degree is $ x_{1,a_1}x_{1,a_2}\cdots x_{1,a_{\ell-1}}$ (resp.,  $ x_{1,b_1}x_{1,b_2}\cdots x_{1,b_{\ell-1}}$), where $x_{1, 0}=1$ and  $x_{1,b}:=x_1x_{2}\cdots x_b$ for $1\leq b\leq r$.
 Since $w^{-1} x_i w=x_{(i)w}$ in $\text{gr}\mathscr H_{\ell,r} $,   the result follows from \eqref{wll}.
\end{proof}

\begin{lemma}\label{tauzlambda}
For  each    $\underline c\in \{0,1\}^\ell$ and $\lambda\in \Lambda_{\ell, r}$, define $z_\lambda^{\underline  c}=m_\lambda^{\underline  c}w_\lambda n_{\lambda'}^{\underline c}$. Then
 $\hat\tau(  z_\lambda^{\underline c}w_\lambda^{-1})=1$.
\end{lemma}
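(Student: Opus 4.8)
The plan is to compute $\hat\tau(z_\lambda^{\underline c} w_\lambda^{-1})$ by passing to the associated graded algebra, since $\hat\tau = \tau\circ\mathrm{gr}_t$. First I would observe that $z_\lambda^{\underline c} w_\lambda^{-1} = m_\lambda^{\underline c} w_\lambda n_{\lambda'}^{\underline c} w_\lambda^{-1}$, and I need to understand the top-degree term of this element. Recall $m_\lambda^{\underline c} = \pi_{[\lambda]} x_\lambda^{\underline c}$ and $n_{\lambda'}^{\underline c} = \tilde\pi_{[\lambda']} y_{\lambda'}^{\underline c}$; the factors $x_\lambda^{\underline c}$ and $y_{\lambda'}^{\underline c}$ lie in $\mathbb C\mathfrak S_r$ (degree $0$), so the degree of the whole product is the same as the degree of $\pi_{[\lambda]} w_\lambda \tilde\pi_{[\lambda']}$ up to conjugating $\tilde\pi_{[\lambda']}$ around. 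The key input is Lemma~\ref{grzlambda}, which identifies $\mathrm{gr}_t(\pi_{[\lambda]} w_{[\lambda]}\tilde\pi_{[\lambda']}) = x_1^{\ell-1}\cdots x_r^{\ell-1} w_{[\lambda]}$, i.e.\ the top degree is exactly $t=(\ell-1)r$ and the leading monomial has full exponents.

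Next I would use the factorization \eqref{wlaex}, namely $w_\lambda = w_{(1)}\cdot w_{(2)}\cdots w_{(\ell)}\cdot w_{[\lambda]}$ with lengths adding, together with the fact that $x_\lambda^{\underline c}$ (a sum of elements of $\mathfrak S_\lambda$, hence of elements of length $\le$ something not involving the $w_{(i)}$) and $w_{(1)}\cdots w_{(\ell)}$ interact so that, after moving $x_\lambda^{\underline c}$ past the appropriate permutations, the product $m_\lambda^{\underline c} w_\lambda n_{\lambda'}^{\underline c} w_\lambda^{-1}$ has top-degree term equal to a nonzero scalar multiple of $x_1^{\ell-1}\cdots x_r^{\ell-1}$ times an element of $\mathbb C\mathfrak S_r$; the point is to show the coefficient of the identity permutation $w=1$ in that leading term is nonzero, and in fact equals $1$. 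Concretely: in $\mathrm{gr}\,\mathscr H_{\ell,r} = \mathbb C_\ell[x_1,\dots,x_r]\rtimes\mathfrak S_r$, conjugation by $w\in\mathfrak S_r$ permutes the $x_i$ but the monomial $x_1^{\ell-1}\cdots x_r^{\ell-1}$ is $\mathfrak S_r$-invariant, so whenever the permutation part collapses to $1$ we pick up this invariant monomial. I would track the permutation part: starting from $\mathrm{gr}_t$ of $\pi_{[\lambda]} w_{[\lambda]}\tilde\pi_{[\lambda']}$, whose permutation part is $w_{[\lambda]}$, I multiply on the left by (the symmetrizer/antisymmetrizer) $x_\lambda^{\underline c}$ inserted in the right place and on the right by $y_{\lambda'}^{\underline c} w_\lambda^{-1}$; using $w_\lambda = w_{(1)}\cdots w_{(\ell)}w_{[\lambda]}$ and the relation $w_{[\lambda]}^{-1} w_{(i)} w_{[\lambda]} = \tilde w_{\ell-i+1}$, the permutation parts telescope. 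The net effect is that exactly one term survives with permutation part $1$, and its scalar coefficient is the product of the "diagonal" contributions of the symmetrizers, each of which contributes $1$ (the coefficient of $1\in\mathfrak S_{\lambda^{(i)}}$ in $x_{\lambda^{(i)}}^{c_i}$ or $y_{\lambda^{(i)}}^{c_i}$ is $\pm1$, and the signs cancel in pairs between $x_\lambda^{\underline c}$ and $y_{\lambda'}^{\underline c}$ because $\lambda'$ reverses the order of components and dualizes, matching each $x$ with the corresponding $y$). Then $\tau$ applied to $x_1^{\ell-1}\cdots x_r^{\ell-1}\cdot 1$ is $1$ by \eqref{defoftau}, and all other monomials (those with a nontrivial permutation part, or with some exponent $<\ell-1$, or with total degree $<t$ hence killed by $\mathrm{gr}_t$) are sent to $0$.

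The main obstacle I anticipate is the bookkeeping in the middle step: showing precisely that the symmetrizer factors $x_\lambda^{\underline c}$ and $y_{\lambda'}^{\underline c}$, after being transported by the length-additive factorizations of $w_\lambda$ and conjugated by $w_{[\lambda]}$, contribute a combined coefficient of exactly $1$ (not merely a nonzero scalar) to the identity-permutation part of the top-degree term, and that no cancellation or extra surviving term appears. This is a careful but essentially combinatorial computation inside $\mathbb C_\ell[x_1,\dots,x_r]\rtimes\mathfrak S_r$: one expands $x_\lambda^{\underline c} = \sum_{u\in\mathfrak S_\lambda}(\pm1)^{?}u$ (signs depending on $\underline c$), similarly for $y_{\lambda'}^{\underline c}$, and checks that the only way to get permutation part $1$ after multiplying $u \cdot w_\lambda \cdot v \cdot w_\lambda^{-1}$ (with $u\in\mathfrak S_\lambda$, $v\in\mathfrak S_{\lambda'}$, in the graded algebra where $w_{(i)}$'s move $x$'s around) is $u=v=1$, using that $\mathfrak S_\lambda \cap w_\lambda \mathfrak S_{\lambda'} w_\lambda^{-1}$ is trivial (which follows from the length identity \eqref{wlae}: $w_\lambda$ has a reduced expression through $\mathfrak S_\lambda$-cosets cleanly). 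Once that triviality is in hand, the sign cancellation is forced by the component-reversing, row-column-swapping definition of $\lambda'$, and the lemma follows. I would relegate the detailed index-chasing to a short paragraph, citing \eqref{wlae}, \eqref{wlaex}, Lemma~\ref{grzlambda}, and \eqref{defoftau} as the load-bearing facts.
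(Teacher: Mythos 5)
Your plan is essentially the paper's proof: pass to $\mathrm{gr}_t$, use Lemma~\ref{grzlambda} to identify the top polynomial part as the $\mathfrak S_r$-invariant monomial $x_1^{\ell-1}\cdots x_r^{\ell-1}$, reduce to the coefficient of the identity permutation in $x_\lambda^{\underline c} w_\lambda y_{\lambda'}^{\underline c} w_\lambda^{-1}$, and use $w_\lambda^{-1}\mathfrak S_\lambda w_\lambda\cap\mathfrak S_{\lambda'}=\{1\}$ (the paper cites \cite[Lemma~1.5]{DR} here, rather than deriving it from \eqref{wlae}; the paper also streamlines the bookkeeping by using the trace property $\hat\tau(h)=\hat\tau(h^*)$/cyclicity to move $x_\lambda^{\underline c}$ to the end). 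One small correction: the sign-cancellation worry is a non-issue, since the only surviving term has $u=v=1$ and the coefficient of $1\in\mathfrak S_\lambda$ in each of $x_\lambda^{\underline c}$ and $y_{\lambda'}^{\underline c}$ is exactly $+1$ (as $(-1)^{l(1)}=1$), so no cancellation between pairs is needed.
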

\begin{proof} We have $$\begin{aligned}
\hat\tau(z_\lambda^{\underline c}w_\lambda^{-1})= &\hat\tau(x_\lambda^{\underline c}\pi_{[\lambda]}w_\lambda\tilde \pi_{[\lambda']}y_{\lambda'}^{\underline c} w_\lambda^{-1})\\
=&\tau \circ \text{gr}_t (\pi_{[\lambda]}w_\lambda\tilde \pi_{[\lambda']}y_{\lambda'}^{\underline c} w_\lambda^{-1}x_\lambda^{\underline c}) \ \  \text{( by Theorem~\ref{tauma}) }\\ =& \tau (x_1^{\ell-1}\cdots x_r^{\ell-1}w_\lambda y_{\lambda'}^{\underline c} w_\lambda^{-1}x_\lambda^{\underline c} )  \ \ \text{( by Lemma~\ref{grzlambda})} \\
=&1,
\end{aligned}
$$
where the last equality  follows from \eqref{defoftau} and  $
w_\lambda^{-1} \Sym_{\lambda} w_\lambda\cap \Sym_{\lambda'}=\{1\}$ in  \cite[Lemma~1.5]{DR}.\end{proof}

By Theorem~\ref{tauma},  there is  an associative symmetric bilinear form  $ \langle\ ,\ \rangle$ on $\H_{\ell,r}$ such that    $$\langle h_1,h_2\rangle=\hat\tau(h_1h_2^*), \text{ for all } h_1,h_2\in\H_{\ell,r}.$$
The following result was proved in \cite[Theorem~5.5]{Ma} for cyclotomic Hecke algebras. In the degenerate case, the result can be verified in a similar way.  The difference is that one needs to use $\hat \tau$ instead of the trace forms on cyclotomic Hecke algebras in \cite{Ma}.

\begin{lemma}\label{mnbilinear}  (cf. \cite[Theorem~5.5]{Ma})
Suppose that $\s,\t\in\Std(\lambda) $ and $\u,\v\in\Std(\mu)$. Then
\begin{equation}\label{kyyy}
\langle m^{\underline  c}_{\s,\t},  n^{\underline c}_{\u,\v}\rangle=\begin{cases}1,&\quad \text{if } (\u',\v' )=(\s,\t),\\
0, &\quad \text{if either $\u' \unrhd \s$ or $\v'\unrhd\t$ is not satisfied.}\end{cases}
\end{equation}
\end{lemma}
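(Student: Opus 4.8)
The strategy is to reduce the computation of $\langle m^{\underline c}_{\s,\t},\,n^{\underline c}_{\u,\v}\rangle = \hat\tau\big(m^{\underline c}_{\s,\t}\,(n^{\underline c}_{\u,\v})^*\big)$ to the single "diagonal" case governed by Lemma~\ref{tauzlambda}, using the cellularity of the two bases from Corollary~\ref{Hc cellular}(a)--(b) together with the fact that $\hat\tau$ vanishes on $\mathbb C\Sym_r = F_0\H_{\ell,r}$ (since every monomial appearing there has $x$-degree $0 < r(\ell-1)$) and, more generally, interacts with the filtration in a controlled way. First I would record the basic structural facts: $m^{\underline c}_{\s,\t} = d(\s)^{-1} m^{\underline c}_\lambda d(\t)$ with $m^{\underline c}_\lambda = \pi_{[\lambda]} x^{\underline c}_\lambda$, and similarly $(n^{\underline c}_{\u,\v})^* = d(\v)^{-1} (n^{\underline c}_\mu)^* d(\u)$ where $(n^{\underline c}_\mu)^* = (\tilde\pi_{[\mu]} y^{\underline c}_\mu)^* = y^{\underline c}_\mu \tilde\pi_{[\mu']}$ up to the reordering that $*$ performs; the key point from Theorem~\ref{H cellular} is that $*$ fixes $s_i$ and $x_1$, hence conjugation by $*$ sends $d(\t)$ to $d(\t)^{-1}$ and fixes the symmetrizers up to the obvious relabelling, and $*$ also preserves $\hat\tau$.

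**Key steps.** Step 1: Use that $\hat\tau$ is a trace form (Theorem~\ref{tauma}) to cyclically rearrange, so that $\langle m^{\underline c}_{\s,\t},\,n^{\underline c}_{\u,\v}\rangle$ becomes, after expanding everything in the basis $\{x_1^{i_1}\cdots x_r^{i_r} w\}$, a sum to which only the top-degree $r(\ell-1)$ part contributes. Step 2: Invoke the graded picture: $\mathrm{gr}\,\H_{\ell,r} = \mathbb C_\ell[x_1,\dots,x_r]\rtimes\Sym_r$, and by Lemma~\ref{grzlambda} the top-degree term of $\pi_{[\lambda]} w_{[\lambda]} \tilde\pi_{[\lambda']}$ is $x_1^{\ell-1}\cdots x_r^{\ell-1} w_{[\lambda]}$. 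Combined with the decomposition \eqref{wlaex} of $w_\lambda$ and the disjointness $w_\lambda^{-1}\Sym_\lambda w_\lambda\cap\Sym_{\lambda'}=\{1\}$ used in Lemma~\ref{tauzlambda}, this pins down which pairings among $\lambda,\mu$ and which tableau pairs can be nonzero. Step 3: The cellular bilinear forms $\phi^{\underline c}$ and their $n$-analogues are "dual" in the sense that the Gram matrix is upper-unitriangular with respect to $\unrhd$ on tableaux — this is exactly what the case distinction in \eqref{kyyy} asserts. So I would show: (i) if $\mu\not\gedom\lambda$ on the relevant indices the pairing is $0$ by the $\H^{\gdom\lambda}_{\ell,r}$-filtration and the fact that $\hat\tau$ kills it; (ii) when $\mu'=\lambda$ and $(\u',\v')=(\s,\t)$, reduce to $\hat\tau(z^{\underline c}_\lambda w_\lambda^{-1}) = 1$ from Lemma~\ref{tauzlambda} after clearing the $d(\cdot)$'s via the trace property; (iii) the strictly-triangular entries vanish because the "wrong" double coset contributes a $w\neq 1$ or a monomial of $x$-degree $< r(\ell-1)$ to $\tau$.

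**Main obstacle.** The delicate part will be Step 3(iii): showing that when $\u'\unrhd\s$ and $\v'\unrhd\t$ hold but the pair is not exactly $(\s,\t)$, the pairing still vanishes — equivalently, controlling the unitriangularity precisely. The naive degree count only shows the relevant coefficient lies in the top-degree part; one then has to argue inside $\mathbb C_\ell[x]\rtimes\Sym_r$ that $\tau$ picks out $w = 1$ and full $x$-degree, and that the only way the product $m^{\underline c}_\lambda d(\t) d(\v)^{-1} y^{\underline c}_\mu \tilde\pi_{[\mu']}$ (suitably conjugated) achieves this is the diagonal configuration. This is where one mimics the corresponding argument in \cite[Theorem~5.5]{Ma}, tracking how the Young-subgroup symmetrizers $x^{\underline c}_\lambda$ and $y^{\underline c}_{\mu'}$ — which are now mixed trivial/sign idempotent-like elements depending on $\underline c$ — force $d(\t) = d(\v')^{-1}\cdot(\text{something in }\Sym_{[\lambda]})$, and the sign contributions from the $y$-factors cancel to give $+1$ rather than $\pm1$. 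I expect the bookkeeping with $\underline c$ and the left/right cosets of $\Sym_{[\lambda]}$ (as in the proof of Corollary~\ref{Hc cellular}) to be the only genuinely new ingredient beyond transcribing Mathas's argument with $\hat\tau$ in place of the cyclotomic trace form.
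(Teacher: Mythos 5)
Your overall strategy---reduce the diagonal case to Lemma~\ref{tauzlambda} via the trace property and \eqref{wlae}, and use a triangularity/degree argument for vanishing---is in line with the paper's proof, and you correctly identify the relevant ingredients (Theorem~\ref{tauma}, Lemmas~\ref{grzlambda} and \ref{tauzlambda}). However, you have misread what needs to be proved, and this misreading is what manufactures your stated ``main obstacle.''

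The lemma asserts vanishing only when \emph{not both} of $\u'\unrhd\s$ and $\v'\unrhd\t$ hold. It says nothing about the region where both dominances hold strictly with $(\u',\v')\neq(\s,\t)$; in general the pairing need \emph{not} vanish there. What you set yourself to prove in Step 3(iii)---vanishing in that strictly dominant region---would make the Gram matrix the identity rather than merely unitriangular, and is neither claimed nor true. Drop that step entirely. For the actual vanishing claim, the paper's argument is much shorter than what you outline: one shows, by adapting the proof of \cite[Lemma~5.4]{Ma} to the degenerate setting, that $m^{\underline c}_{\s,\t}\,n^{\underline c}_{\v,\u}=0$ unless $\v'\unrhd\t$; since $\langle m^{\underline c}_{\s,\t},n^{\underline c}_{\u,\v}\rangle=\hat\tau(m^{\underline c}_{\s,\t}\,n^{\underline c}_{\v,\u})$, this gives one of the two implications outright. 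The other, that nonvanishing forces $\u'\unrhd\s$, then follows from the $*$-invariance $\hat\tau(h)=\hat\tau(h^*)$ recorded in Theorem~\ref{tauma}: apply $*$ and cycle, which swaps the roles of $(\s,\u)$ and $(\t,\v)$. You mention that $*$ preserves $\hat\tau$ only in passing; it is in fact doing real work here. Your diagonal computation is fine: by \eqref{wlae}, $d(\t)d(\t')^{-1}=w_\lambda$, so after cycling under $\hat\tau$ one lands exactly on $\hat\tau(z^{\underline c}_\lambda w_\lambda^{-1})=1$, and the sign bookkeeping you worry about is already absorbed into Lemma~\ref{tauzlambda}. (Minor slip: $(\tilde\pi_{[\mu]}y^{\underline c}_\mu)^*=y^{\underline c}_\mu\tilde\pi_{[\mu]}$, with $\mu$ not $\mu'$, since $*$ fixes every $x_i$.)
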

\begin{proof} By arguments similar to those  in  the proof of  \cite[lemma~5.4]{Ma},  $\v'  \unrhd \t$ if $ m^{\underline c}_{\s,\t}n^{\underline c}_{\v,\u}\neq0$. So, $\v' \unrhd \t$ if $ \langle m^{\underline  c}_{\s,\t},n^{\underline c}_{\u,\v}\rangle\neq 0$. By Theorem~\ref{tauma},  $\u'\unrhd \s$ if $ \langle m^{\underline  c}_{\s,\t},n^{\underline c}_{\u,\v}\rangle\neq 0$.
Suppose that $(\u' ,\v' )=(\s,\t)$. By \eqref{wlae} and Lemma~\ref{tauzlambda},
$\langle  m^{\underline  c}_{\s,\t}, n^{\underline  c}_{\s' ,\t' } \rangle= \hat\tau (m^{\underline  c}_{\s,\t}n^{\underline c}_{\t' ,\s'}) = \hat\tau ( w_\lambda^{-1} m^{\underline  c}_\lambda w_\lambda n_{\lambda'}^{\underline c})=1$.
\end{proof}

\begin{Prop}\label{dualofcells}(cf. \cite[Corollary~5.7]{Ma}) As  $\H_{\ell, r}$-modules, $S^{\underline c}(\lambda)^\circledast\cong \tilde {S}^{\underline c}(\lambda')$ for any $\lambda\in \Lambda_{\ell, r}$.
\end{Prop}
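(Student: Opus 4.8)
The plan is to exhibit an explicit $\H_{\ell,r}$-linear isomorphism $S^{\underline c}(\lambda)^\circledast \xrightarrow{\sim} \tilde S^{\underline c}(\lambda')$ using the bilinear pairing between the two families of cellular bases provided by Lemma~\ref{mnbilinear}. Recall $S^{\underline c}(\lambda)$ has basis $\set{m^{\underline c}_\s|\s\in\Std(\lambda)}$ and $\tilde S^{\underline c}(\lambda')$ has basis $\set{n^{\underline c}_\u|\u\in\Std(\lambda')}$. The map I would use is $\theta\map{S^{\underline c}(\lambda)^\circledast}{\tilde S^{\underline c}(\lambda')}$ determined by requiring that the dual basis element $(m^{\underline c}_\s)^*\in S^{\underline c}(\lambda)^\circledast$ be sent to $n^{\underline c}_{\s'}$ for each $\s\in\Std(\lambda)$, where $\s\mapsto\s'$ is the bijection $\Std(\lambda)\to\Std(\lambda')$ defined in the text. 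Since this is a bijection between bases, $\theta$ is automatically a linear isomorphism; the entire content is checking $\H_{\ell,r}$-equivariance.

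The first step is to reformulate both module structures through the trace form $\hat\tau$. By Lemma~\ref{mnbilinear} the pairing $\langle m^{\underline c}_{\s,\t}, n^{\underline c}_{\u,\v}\rangle = \hat\tau(m^{\underline c}_{\s,\t}(n^{\underline c}_{\u,\v})^*)$ is ``unitriangular'' with respect to the dominance orders: it equals $1$ when $(\u',\v')=(\s,\t)$ and vanishes unless $\u'\unrhd\s$ and $\v'\unrhd\t$. I would first upgrade this from the statement about algebra elements $m^{\underline c}_{\s,\t}, n^{\underline c}_{\u,\v}$ to a statement about the \emph{cell modules}: there is a well-defined bilinear pairing $\langle\ ,\ \rangle\map{S^{\underline c}(\lambda)\times\tilde S^{\underline c}(\lambda')}{\mathbb C}$ with $\langle m^{\underline c}_\s, n^{\underline c}_\u\rangle$ given by the same unitriangular recipe. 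This descent is where one uses that $\H^{\gdom\lambda}_{\ell,r}$ pairs to zero against everything relevant (the $\mu\gdom\lambda$ terms are killed because for such $\mu$ one cannot have $\u'\unrhd\s$ with $\u'\in\Std(\mu)$, $\s\in\Std(\lambda)$), together with the definition \eqref{cellact} of the cell action via the structure constants $r_{\s,\u}$. The second step is associativity: for $h\in\H_{\ell,r}$ one has $\langle m^{\underline c}_\s\, h, n^{\underline c}_\u\rangle = \langle m^{\underline c}_\s, n^{\underline c}_\u\, h^*\rangle$, which follows from $\hat\tau(ab)=\hat\tau(ba)$ and $\hat\tau(h)=\hat\tau(h^*)$ in Theorem~\ref{tauma} after unwinding the cell action through the structure constants. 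Granting the nondegeneracy of the pairing (immediate from unitriangularity, since it is invertible upper-triangular with respect to a total refinement of $\unrhd$), this pairing induces exactly the isomorphism $S^{\underline c}(\lambda)^\circledast\cong\tilde S^{\underline c}(\lambda')$ identifying $(m^{\underline c}_\s)^*$ with the functional $n^{\underline c}_\u\mapsto\langle m^{\underline c}_\s, n^{\underline c}_\u\rangle$; unitriangularity shows this functional corresponds under $\langle\ ,\ \rangle$ to $n^{\underline c}_{\s'}$ up to strictly-lower-order corrections, and a triangular change of basis then yields the clean statement.

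The main obstacle I anticipate is the descent of the pairing from the algebra to the cell modules together with verifying associativity at the level of cell modules: one must carefully track how the cell-module action \eqref{cellact} interacts with multiplication in $\H_{\ell,r}$ modulo the two-sided ideals $\H^{\gdom\lambda}_{\ell,r}$ and $\H^{\gdom\lambda'}_{\ell,r}$, and confirm that all error terms are annihilated by $\hat\tau$ against the relevant basis elements. This is precisely the degenerate analogue of Mathas' argument for \cite[Corollary~5.7]{Ma}, and as the text already notes, the only substantive change is the systematic replacement of the cyclotomic trace form by $\hat\tau$; Lemma~\ref{tauzlambda} and Lemma~\ref{mnbilinear} have already isolated the one nontrivial computation (the normalization $\hat\tau(z^{\underline c}_\lambda w_\lambda^{-1})=1$ and the unitriangularity), so the remaining work is the bookkeeping of order estimates, which I would carry out by following \cite[\S5]{Ma} essentially verbatim.
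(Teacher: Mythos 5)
Your proposal is correct and takes essentially the same route as the paper: both descend the trace pairing $\langle\ ,\ \rangle$ on $\H_{\ell,r}$ to an invariant form on $S^{\underline c}(\lambda)\times \tilde S^{\underline c}(\lambda')$ using Lemma~\ref{mnbilinear}, then read off the isomorphism from the upper unitriangularity of the resulting Gram matrix. Your additional observations (the explicit candidate map $(m^{\underline c}_\s)^*\mapsto n^{\underline c}_{\s'}$ holding only up to a triangular change of basis, and the need to check associativity $\langle m^{\underline c}_\s h, n^{\underline c}_\u\rangle=\langle m^{\underline c}_\s, n^{\underline c}_\u h^*\rangle$) are exactly the details the paper leaves implicit when it says the algebra-level invariant form ``induces'' one on the cell modules.
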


\begin{proof}
%The result follows from Lemma~\ref{mnbilinear} and the proof of \cite[Corollary~5.7]{Ma1}.

By Lemma~\ref{mnbilinear},  the invariant form $\langle\ ,\ \rangle$ on $\H_{\ell, r}$ induces an invariant  form $ \langle\ ,\ \rangle:S^{\underline c}(\lambda)\times \tilde {S}^{\underline c}(\lambda')\rightarrow \mathbb C$  such that
$$ \langle m_\s^{\underline c} + \mathscr H_{\ell,r}^{\rhd \lambda}, n_\t^{\underline c}+\tilde{\mathscr H}_{\ell,r}^{\rhd \lambda'} \rangle=\langle m_{\t^\lambda,\s}^{\underline c}  , n_{\t^{\lambda'}, \t}^{\underline c}\rangle.$$
By \eqref{kyyy},  the corresponding  Gram matrix is upper unitriangular, forcing
$S^{\underline c}(\lambda)^\circledast\cong \tilde {S}^{\underline c}(\lambda')$.
\end{proof}

%By  Proposition~\ref{dualofcells}, the   head of $\tilde {S}^{\underline c}(\lambda')$ is isomorphic to the socle of $S^{\underline c}(\lambda)$. This fact together with %Lemma~\ref{subcell} will be used  to establish an explicit relationship between simple modules appearing in the socle of a tilting module for a  general linear Lie superalgebra and %those for a  degenerate cyclotomic Hecke algebra
%in  Lemma~\ref{irrunderpi}.

  Mimicking the arguments of the proof of \cite[Theorem~2.9]{DR} yields the following result.

  \begin{lemma}\label{subcell} Suppose $\lambda\in \Lambda_{\ell, r}$.  Then
   $\tilde{S}^{\underline  c}(\lambda')\cong  S_{\lambda}^{\underline c}$, where $S_\lambda^{\underline c} =z^{\underline c}_{\lambda}\mathscr H_{\ell,r}$.
   Moreover, $S_\lambda^{\underline c}  $  has a basis $\{z^{\underline c}_{\lambda} d(\t)\mid \t\in \Std(\lambda')\}$.
   \end{lemma}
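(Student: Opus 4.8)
The plan is to produce the isomorphism $\tilde S^{\underline c}(\lambda')\cong S_\lambda^{\underline c}$ via the invariant bilinear form $\langle\ ,\ \rangle$ on $\H_{\ell,r}$ from Theorem~\ref{tauma}, exactly mirroring the route taken in Proposition~\ref{dualofcells}, and then pin down the stated basis by a dimension count. First I would recall that, by the cellular structure in Corollary~\ref{Hc cellular}(b), the module $\tilde S^{\underline c}(\lambda')$ is spanned by the images $n^{\underline c}_\t + \tilde{\mathscr H}_{\ell,r}^{\rhd\lambda'}$ for $\t\in\Std(\lambda')$, with $\H_{\ell,r}$ acting as in \eqref{cellact}. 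The key point is that right multiplication $h\mapsto z^{\underline c}_\lambda h$ on $\H_{\ell,r}$, followed by the pairing $\langle m^{\underline c}_{\t^\lambda,\s},\ -\ \rangle$ against the standard spanning vectors of $S^{\underline c}(\lambda)$, should identify $z^{\underline c}_\lambda\mathscr H_{\ell,r}$ with (a quotient realization of) $\tilde S^{\underline c}(\lambda')$. Concretely, I would define the map $\theta\colon \tilde S^{\underline c}(\lambda')\to S^{\underline c}_\lambda$ on the spanning set by $n^{\underline c}_\t + \tilde{\mathscr H}_{\ell,r}^{\rhd\lambda'}\mapsto z^{\underline c}_\lambda d(\t)$ (recalling $z^{\underline c}_\lambda = m^{\underline c}_\lambda w_\lambda n^{\underline c}_{\lambda'}$, and $w_\lambda = d(\t_\lambda)$, so $z^{\underline c}_\lambda d(\t) = m^{\underline c}_\lambda w_\lambda n^{\underline c}_{\lambda'} d(\t)$ corresponds to applying the $\mathscr H$-action to $n^{\underline c}_{\t_{\lambda'}}$ via $d(\t)$).

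The main work is to check this is well defined and an isomorphism. For well-definedness I would argue that $z^{\underline c}_\lambda \mathscr H_{\ell,r}^{\rhd\lambda'\text{-part}}$ — more precisely the part of $\tilde{\mathscr H}_{\ell,r}$ spanned by terms indexed by $\mu\gdom\lambda'$ — is annihilated: by Lemma~\ref{mnbilinear}, pairing $m^{\underline c}_\lambda w_\lambda = $ (up to the $*$-symmetry of $\hat\tau$ in Theorem~\ref{tauma}) against any $n^{\underline c}_{\u,\v}$ with $\u',\v'$ strictly dominating $(\t^\lambda,\cdot)$ vanishes, which forces $z^{\underline c}_\lambda$ to kill the $\gdom\lambda'$ layer; this is the degenerate analogue of what happens in \cite[Theorem~2.9]{DR}, and the excerpt explicitly authorizes mimicking that argument. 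To see $\theta$ is a homomorphism, observe that right multiplication by $h\in\H_{\ell,r}$ on $S^{\underline c}_\lambda$ is literally right multiplication in $\H_{\ell,r}$, and on $\tilde S^{\underline c}(\lambda')$ the action is governed by the same structure constants $r_{\s,\u}$ because $z^{\underline c}_\lambda d(\t)$ and $n^{\underline c}_{\t^{\lambda'},\t}$ differ only by the left factor $m^{\underline c}_\lambda w_\lambda$, which is constant under the right action; one then uses \eqref{cellact} to match coefficients. Surjectivity is immediate since the $z^{\underline c}_\lambda d(\t)$ span $S^{\underline c}_\lambda$ by definition. For injectivity — equivalently, that $\{z^{\underline c}_\lambda d(\t)\mid \t\in\Std(\lambda')\}$ is linearly independent, giving the claimed basis — I would compose $\theta$ with the pairing: by Lemma~\ref{mnbilinear}, the matrix $\big(\langle m^{\underline c}_{\t^\lambda,\s},\, z^{\underline c}_\lambda d(\t)\rangle\big)_{\s\in\Std(\lambda),\,\t\in\Std(\lambda')}$ is, after reindexing via $\s\leftrightarrow\s'$, upper unitriangular with respect to $\unrhd$ (the $(\s',\t)$ entry being essentially $\langle m^{\underline c}_{\t^\lambda,\s'}, n^{\underline c}_{\lambda',\cdot}\,w_\lambda d(\t)\rangle = \hat\tau(\cdots)$, nonzero exactly when $\t=\s'$), hence nonsingular; since $|\Std(\lambda)|=|\Std(\lambda')|$ this shows the $z^{\underline c}_\lambda d(\t)$ are independent and $\dim S^{\underline c}_\lambda = |\Std(\lambda')| = \dim \tilde S^{\underline c}(\lambda')$, so $\theta$ is bijective.

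The step I expect to be the genuine obstacle is the well-definedness, i.e. verifying that $z^{\underline c}_\lambda$ annihilates the dominance filtration piece $\tilde{\mathscr H}^{\rhd\lambda'}_{\ell,r}$ so that the assignment descends to the cell module $\tilde S^{\underline c}(\lambda')$ rather than just to the free module. This requires combining the triangularity of Lemma~\ref{mnbilinear} with the $*$-invariance of $\hat\tau$ (Theorem~\ref{tauma}) and the factorization \eqref{wlaex} of $w_\lambda$ through $w_{[\lambda]}$, together with Lemma~\ref{tauzlambda} to certify the "diagonal" coefficient is $1$; the inner-product computation $\hat\tau(z^{\underline c}_\lambda w_\lambda^{-1})=1$ is precisely the normalization that makes the unitriangular matrix have $1$'s on the diagonal. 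Everything else is bookkeeping with coset representatives (the decomposition $d(\s)=d(\s_1)\cdots d(\s_\ell)d$ from the proof of Corollary~\ref{Hc cellular}) and the cellular action formula \eqref{cellact}, and can be carried out exactly as in \cite[Theorem~2.9]{DR}, substituting $\hat\tau$ for the trace form used there.
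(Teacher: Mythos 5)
Your overall strategy — define $\theta\colon n^{\underline c}_{\t}+\tilde{\mathscr H}_{\ell,r}^{\rhd\lambda'}\mapsto z^{\underline c}_\lambda d(\t)$, observe it is a right $\H_{\ell,r}$-map because it is left multiplication by the fixed element $m^{\underline c}_\lambda w_\lambda$, then prove bijectivity via a triangularity/dimension count — is exactly the route of \cite[Theorem~2.9]{DR} that the paper invokes, and your use of Lemma~\ref{tauzlambda} and Lemma~\ref{mnbilinear} for the injectivity/basis step is correct. However, there is a genuine gap in how you justify well-definedness. You assert that because $\langle m^{\underline c}_{\t^\lambda,\t_\lambda}, n^{\underline c}_{\u,\v}\rangle=0$ in the relevant cases, it ``forces $z^{\underline c}_\lambda$ to kill the $\gdom\lambda'$ layer.'' But $\langle a,b\rangle=\hat\tau(ab^*)$ is a scalar; its vanishing tells you $\hat\tau\bigl(m^{\underline c}_\lambda w_\lambda\, n^{\underline c}_{\v,\u}\bigr)=0$, which does not by itself imply that the element $m^{\underline c}_\lambda w_\lambda\, n^{\underline c}_{\v,\u}$ is zero in $\H_{\ell,r}$. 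One would need $\hat\tau$ of that element multiplied by \emph{every} $h\in\H_{\ell,r}$ to vanish, and Lemma~\ref{mnbilinear} does not supply that.

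What actually makes the map descend from $\tilde{\mathscr H}_{\ell,r}$ to the cell module is the multiplicative annihilation statement $m^{\underline c}_\lambda\,\H_{\ell,r}\,n^{\underline c}_{\mu}=0$ whenever $\mu\gdom\lambda'$ (equivalently $\lambda\gdom\mu'$). This is obtained directly from two structural facts, both visible later in the paper: the degenerate analogue of \cite[Proposition~1.8]{DR}, namely $\pi_{[\lambda]}\,\H_{\ell,r}\,\tilde\pi_{[\nu]}=0$ unless $[\lambda]\preceq[\nu']$ (stated as \eqref{pilabnu}), together with $x_\lambda\,\mathbb C\Sym_r\, y_{\mu'}=0$ unless $\lambda\unlhd\mu$ from Lemma~\ref{liehwv}(a). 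The paper uses precisely this combination in the proof of Lemma~\ref{moduleisomofhe}. So the role of the bilinear form in this lemma is only to certify linear independence of $\{z^{\underline c}_\lambda d(\t)\}$ (via the unitriangular Gram matrix and the normalization $\hat\tau(z^{\underline c}_\lambda w_\lambda^{-1})=1$ from Lemma~\ref{tauzlambda}); well-definedness must come from the product-vanishing lemmas, not from the pairing.
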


\section{Super Schur-Weyl duality }

In this section,  we establish    explicit relationships between the representation theory of general linear Lie (super)algebras and that of degenerate cyclotomic Hecke algebras.
Fix $\underline{n}=(n_1, \ldots, n_\ell)\in \mathbb Z_+^{\ell}$ and $\underline c=(c_1, \ldots, c_\ell)\in \{0,1\}^\ell$.
Let $U=\oplus_{i=1}^\ell U_i$, where  $U_i$  is a homogenous   complex superspace with  basis $\{u_{p_{i-1}+1}, \ldots, u_{p_{i}}\}$  such that

\begin{equation}\label{defofpi} p_0=0, \text{ and }  p_i=\sum_{j=1}^i n_j  \text{ for all $ 1\leq i\leq \ell$,}
 \end{equation}
 and the parity of each element in $U_i$ is   $\bar c_i\in \mathbb Z_2$, where $\mathbb Z_2=\mathbb Z/2\mathbb Z$.     Thus, $U$ is a superspace whose  even (resp., odd) dimension is  $n:=\sum_{ c_i=0} n_i$ (resp., $m:=\sum_{ c_i=1} n_i$).
  Let  $\mfg=\End_{\mathbb C} (U)$. The  matrix unit $e_{i, j}\in \mfg$ with respect to the basis $\{u_1,\ldots, u_{m+n}\}$ of $U$ is of
parity  $[e_{i, j}]=[i]+[j]$, where $[i]=\bar c_k\in \mathbb Z_2$ if $u_i\in U_k$ for some  $1\le k\le \ell$.
 The Lie bracket on $\mfg$ is
 \begin{equation}\label{lib}
 [e_{i, j},e_{k, l}]=\d_{j,k}e_{i, l}-(-1)^{([i]+[j])([k]+[l])}\delta_{l,i}e_{k,j},\end{equation}
 where $\delta_{j,k}=1$ if $j=k$ and $0$, otherwise.  Then $\mathfrak g$ is a Lie superalgebra which is isomorphic to the general linear Lie superalgebra $ \mathfrak{gl}_{n|m}$.
Let $\fh=\mathbb C\text{-}{\rm span}\{e_{i,i}\,\,|\, 1\le i\le m+n\}$ be a Cartan subalgebra of $\mfg$.
Its dual space $\fh^*$ has dual basis
 $\{\delta_i\,|\,1\le i\le  m+n\}$ such that $\delta_i (e_{j,j})=\delta_{i,j}$. Abusing  notation, we also denote by $\lambda$ an element in   $\fh^*$ and call it a  {\it weight}. So,  \begin{equation}\label{weight-}\l=\sum_{i=1}^{m+n} \l_i\delta_i,\ \  \l_i\in\C.\end{equation}
Let $\mathfrak b$ be the Borel subalgebra which consists of all upper triangular matrices.
 The root system of $\mfg$ is $$R=\{\delta_{i}-\delta_j\mid 1\le i\neq  j\le m+ n\}$$ and the set of   positive roots associated to $\mathfrak b$ is
  $$R^+=R^+_{\bar 0}\cup R_{\bar 1}^+=\{\delta_i-\delta_j\mid i<j\}.$$ The parity of $\delta_i-\delta_j$ is $[i]+[j]$.  There is a non-degenerate
bilinear form $(\ , \ )$ on $\mathfrak h^*$ such that $$(\delta_i, \delta_j)=(-1)^{[i]}\delta_{i, j}.$$ Let $\unrhd$ be the dominance order on $\mathfrak h^*$ such that $\lambda\unrhd \mu$ if $\lambda-\mu\in \mathbb N R^+$.
When we consider simple $\H_{\ell, r}$-modules,  we  assume  $\o\in \mathbb Z^\ell$ without loss of any generality. See \cite[Section~7.1]{K} for its explanation.
So, it suffices  to consider $\o\in \mathbb Z^\ell$ in the definition of $\H_{\ell, r}$. For this purpose, it suffices to consider
 $\lambda\in \mathfrak h^*_{\mathbb Z}$,  a linear combination of $\delta_i$'s with integral coefficients $\lambda_i$'s.
 For each $\lambda\in \mathfrak h^*_{\mathbb Z}$, following \cite{BLW}, define $$p_\lambda=\sum_{[i]=\bar 1} (\lambda, \delta_i)\in \mathbb Z_2.$$ The super category $\mathcal O$ is the category of all finitely generated $\mathfrak g$-supermodules $M=M_{\bar 0}\oplus M_{\bar 1}$ which are \begin{itemize} \item
locally finite dimensional over $\mathfrak b$, \item   $M=\oplus_{\lambda\in \mathfrak h_{\mathbb Z}^*} M_{\lambda, p_\lambda}$, where $M_{\lambda, p_\lambda}$ is the $\lambda$-weight space of $M_{p_\lambda} $ with respect to $\mathfrak h$.\end{itemize}
 In particular, the natural $\mathfrak g$-supermodule $U$ and its linear dual $U^*$ are two objects  in $\mathcal O$. For each $\lambda\in \mathfrak h_{\mathbb Z}^*$, let $$M(\lambda)=\U(\mfg)\otimes_{\U(\mathfrak b)} \mathbb C_{\lambda, p_\lambda}$$ be the Verma supermodule with  highest weight $\lambda$, where $\mathbb C_{\lambda, p_\lambda}$ is the $1$-dimensional $\mathfrak b$-supermodule of weight $\lambda$ with the parity   $p_\lambda$. Let $L(\lambda)$ be the simple head of $M(\lambda)$.

Let $\mathfrak p=\mathfrak l+\mathfrak b$, where  $\mathfrak l=\mathfrak {gl}_{n_1}\oplus  \mathfrak {gl}_{n_2}\oplus \ldots \oplus \mathfrak {gl}_{n_\ell}$. The super  parabolic category $\mathcal O^{\underline n, \underline c}$ is the full subcategory of $\mathcal O$ containing
all objects which are locally finite dimensional over $\mathfrak p$. Following \cite{BLW},   $\mathcal O^{\underline n, \underline c}$ is said to be of type $(\underline n, \underline c)$. Let
\begin{equation}\label{rho}\rho=-\sum_{1\leq i<j\leq m+n}(-1)^{[i]+[j]} \delta_j -\sum_{[i]=1} \delta_i.\end{equation}
The simple objects of $\mathcal O^{\underline n, \underline c}$ are indexed by the set of $\mathfrak p$-dominant  weights
 \begin{equation}\label{pdo}\Lambda_{\mathfrak p}=\{ \lambda\in \mathfrak h_{\mathbb Z}^*\mid (-1)^{c_k} (\lambda+\rho, \delta_j-\delta_{j+1})>0, 1\le k\le \ell \text{ and } p_{k-1}<j< p_k  \}. \end{equation}
 For each $\lambda\in \Lambda_{\mathfrak p}$, the corresponding  parabolic Verma supermodule is  $$M^{\mathfrak p}(\lambda)=\U(\mfg)\otimes_{\U(\mathfrak p)}  L_{\lambda},$$ where  $L_\lambda$ is the finite dimensional simple $\mathfrak l$-supermodule
of highest weight $\lambda$ with the parity $p_\lambda$.
Following \cite{BLW},  let $I_+=I\cup (I+1)$,  where $I\subseteq \mathbb Z$ is  a non-empty  interval of $\mathbb Z$. If $J$ is a finite sub-interval of  $ I$ such that $|J_+|\ge 2 \max\{n_i\mid 1\le i\le \ell\}$,
 let $\lambda_{J}\in \Lambda_{\mathfrak p}$ such that
\begin{equation}\label{kjw} \lambda_{J}+\rho=\sum_{i=1}^{m+n} \kappa_i\delta_i,\end{equation}
where
$$(\kappa_{p_i+1}, \kappa_{p_{i}+2}, \ldots, \kappa_{p_{i+1}})=\begin{cases} (n_i, n_i-1, \ldots, 1)+(\text{Inf}J-1)^{n_i},  & \text{if $c_i=0$,}\\
-(|J_+|-n_i+\text{Inf}J, |J_+|-n_i+\text{Inf}J+1, \ldots, |J_+|+\text{Inf}J -1), & \text{if $c_i=1$,}\\
\end{cases}
$$ and $\text{Inf}J$ is the minimal integer in $J$.
Then  $L(\lambda_{J})$ is the unique indecomposable object in the block containing it~\cite[Lemma~2.20]{BLW}. So, $L({\lambda_{J}})$, which is $M^{\mathfrak p}(\lambda_{J})$,  is both projective and injective and hence tilting in $\mathcal O^{\underline n, \underline c}$.
For any $r\in \mathbb N$, let
\begin{equation}\label{tens321}  T^{J,  r}:=L(\lambda_{J})\otimes U^{\otimes r}. \end{equation}
 Then $T^{J,  r}$ is a tilting module in $\mathcal O^{\underline n, \underline c}$.
 %We  are going to study  $\End_{\mathcal O^{\underline n, \underline c}} (T^{J, r})$ for different $\underline n$ and $\underline c$.

Now, we want to establish an explicit relationship between cell modules of $\H_{\ell, r}$ and  certain parabolic Verma supermodules in some $\mathcal O^{\underline n, \underline c}$'s.
We need the explicit $\H_{\ell, r}$-action on  $T^{J, r}$ from \cite[Section~3]{BLW} as follows.
Recall  the definition of the  Casimir element  \begin{equation} \label{cas} C=\sum_{1\le i, j\le m+n} (-1)^{[j]}e_{i, j} e_{j,i}.  \end{equation}
It is a central element of $\U(\mfg)$.  Let $ \Omega=\frac12(\Delta(C)-C\otimes1-1\otimes C)$, where
$\Delta$ is the co-multiplication of $\U(\mathfrak g)$.  Then \begin{equation} \label{omega} \Omega=\sum_{1\le i,j\le m+n}{}(-1)^{[j]}e_{i, j}\OTIMES e_{j,i}.\end{equation} Order the positions of  the  tensor factors of  $T^{{J}, r}$ as $0, 1, 2, \ldots, r$ from left to right successively.
For $0\le a<b \le r$,  define
$\phi_{a, b}: \mfg^{\otimes2}\to \mfg^{\otimes(r+1)}$ by \begin{equation}\label{pi-ab}
\phi_{a,  b}(x\OTIMES y)=1\OTIMES\ldots\OTIMES1\OTIMES \stackrel{a\text{th}}{x}\OTIMES 1\OTIMES\ldots\OTIMES1\OTIMES \stackrel{b\text{th}}{y}\OTIMES1\OTIMES\ldots\OTIMES1\mbox{ for all  }x,y\in\mfg.\end{equation}
Then  $s_1, \ldots, s_{r-1}, x_1\in {\rm End}_{\mathcal O^{\underline n, \underline c}}( T^{J, r})$, where
\begin{equation}\label{operator--1}s_i=\phi_{i,i+1}(\Omega)|_{T^{J, r}}\ (1\le i<r),\quad
x_1=\phi_{0,1}(\Omega)|_{T^{J, r}}.
\end{equation}
 Later on, we always denote by  $u$  the  highest weight vector of $L(\lambda_J)$.

\begin{lemma}\label{polyofx}
\begin{enumerate}
\item $L(\lambda_J) \otimes U$ has a super parabolic Verma flag
\begin{equation}\label{filofvm}
0=M_0\subset M_1\subset M_2\subset \ldots\subset M_\ell=L(\lambda_J)\otimes U
\end{equation}
such that $M_i$ is generated by $\{u\otimes u_1, u\otimes u_{p_1+1},\ldots, u\otimes u_{p_{i-1}+1}\}$ and moreover,  $M_i/M_{i-1}\cong M^{\mathfrak p}(\lambda_{J}+\delta_{p_{i-1}+1})$ for all $1\le i\le \ell$, where $p_i$'s are given in  \eqref{defofpi}.
 \item  For all $1\le i\le \ell$, let  $\omega_i=(\lambda_{J}+\rho,\delta_{p_i+1})+(1-(-1)^{c_i})/2$. Then  $f_i(x_1):=\prod_{j=1}^i (x_1-\omega_j)$  acts on  $M_i$ as zero.

\end{enumerate}
\end{lemma}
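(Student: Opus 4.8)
The plan is to pass to the parabolic picture. Since $L(\lambda_J)=M^{\mathfrak p}(\lambda_J)=\U(\mfg)\otimes_{\U(\mathfrak p)}L_{\lambda_J}$, the tensor identity gives $L(\lambda_J)\otimes U\cong \U(\mfg)\otimes_{\U(\mathfrak p)}\bigl(L_{\lambda_J}\otimes U|_{\mathfrak p}\bigr)$, and the functor $\U(\mfg)\otimes_{\U(\mathfrak p)}(-)$ is exact because $\U(\mfg)$ is free over $\U(\mathfrak p)$ (super PBW). So for (a) it is enough to construct a filtration of the $\mathfrak p$-module $L_{\lambda_J}\otimes U$ by $\mathfrak p$-submodules whose subquotients are the relevant $L_\mu$'s and then apply this functor; for (b) it is then enough to evaluate, on each subquotient, the scalar by which $x_1$ acts.

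For (a) I would first record the structural fact that $\lambda_J$ is constant on each block: by \eqref{kjw} the coordinates of $\lambda_J+\rho$ are consecutive integers decreasing by $1$ on each block, and $\rho$ also decreases by $1$ on each block, so $(\lambda_J)_{p_{i-1}+1}=\cdots=(\lambda_J)_{p_i}$. Hence $L_{\lambda_J}=\boxtimes_i L_{\lambda_J^{(i)}}$ is a one-dimensional $\mathfrak l$-module, and as $\mathfrak l$-modules $L_{\lambda_J}\otimes U=\bigoplus_{i=1}^{\ell}\bigl(L_{\lambda_J}\otimes U_i\bigr)$, each summand being a character twist of the natural $\mathfrak{gl}(U_i)$-module, hence $\mathfrak l$-simple of highest weight $\lambda_J+\delta_{p_{i-1}+1}$; a short check shows $\lambda_J+\delta_{p_{i-1}+1}\in\Lambda_{\mathfrak p}$ (adding $\delta_k$ keeps $\lambda_J+\rho$ strictly decreasing on its block exactly when $k$ is the first index of the block) and that its parity $p_{\lambda_J}+\bar c_i$ equals $p_{\lambda_J+\delta_{p_{i-1}+1}}$, so the $i$th summand is isomorphic to $L_{\lambda_J+\delta_{p_{i-1}+1}}$. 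Now order the highest weights by the dominance order, $\lambda_J+\delta_1\gdom\lambda_J+\delta_{p_1+1}\gdom\cdots\gdom\lambda_J+\delta_{p_{\ell-1}+1}$ (since $\delta_a-\delta_b\in R^+$ iff $a<b$), and let $M_i$ be the $\mathfrak p$-submodule generated by $u\otimes u_1,\dots,u\otimes u_{p_{i-1}+1}$. Because $u$ is killed by all positive root vectors and $e_{a,b}(u\otimes u_c)=\pm\,\delta_{b,c}\,u\otimes u_a$ for $a<b$, the nilradical $\mathfrak u_{\mathfrak p}$ of $\mathfrak p$ carries each summand $L_{\lambda_J}\otimes U_i$ into those lying strictly earlier in this order; hence $M_i=\bigoplus_{k\le i}\bigl(L_{\lambda_J}\otimes U_k\bigr)$ is a $\mathfrak p$-submodule, $\mathfrak u_{\mathfrak p}$ acts as zero on $M_i/M_{i-1}\cong L_{\lambda_J}\otimes U_i$, and applying $\U(\mfg)\otimes_{\U(\mathfrak p)}(-)$ yields the asserted super parabolic Verma flag with $M_i/M_{i-1}\cong M^{\mathfrak p}(\lambda_J+\delta_{p_{i-1}+1})$; under the tensor identity the chosen $\mathfrak p$-generators correspond to the vectors $u\otimes u_{p_{k-1}+1}$.

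For (b), write $x_1=\phi_{0,1}(\Omega)|_{T^{J,1}}$ with $\Omega=\tfrac12\bigl(\Delta(C)-C\otimes 1-1\otimes C\bigr)$; since $C$ is central it acts by a scalar on each of the three factors, so $x_1$ acts on $M_j/M_{j-1}\cong M^{\mathfrak p}(\lambda_J+\delta_{p_{j-1}+1})$ by the scalar $\tfrac12\bigl(c(\lambda_J+\delta_{p_{j-1}+1})-c(\lambda_J)-c(\delta_1)\bigr)$, where $c(\mu)=(\mu,\mu+2\rho)$ is the value of $C$ on $L(\mu)$ and $\delta_1$ is the highest weight of the natural module $U$. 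One finds $c(\delta_1)$ to be a constant ($=1$) independent of $\underline c$, so this scalar collapses to $(\lambda_J+\rho,\delta_{p_{j-1}+1})+\tfrac12\bigl((\delta_{p_{j-1}+1},\delta_{p_{j-1}+1})-1\bigr)$, which is $\omega_j$. Since $M_i$ is filtered by $M_1/M_0,\dots,M_i/M_{i-1}$ with $x_1$ acting by $\omega_1,\dots,\omega_i$ respectively, $\prod_{j=1}^i(x_1-\omega_j)$ annihilates $M_i$, which is (b).

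I expect the conceptual steps — the tensor identity, the fact that the rigidity of $\lambda_J$ forces a single Pieri term, and the order of the flag being dictated by the dominance order on the $\delta$'s — to go through smoothly; the main work is bookkeeping: verifying from \eqref{kjw} that $\lambda_J$ is genuinely constant on blocks, tracking Koszul signs when commuting $\U(\mfg)$ past the tensor factor in the tensor identity, and, above all, matching the Casimir eigenvalue to the precise normalization of $\omega_j$ (the conventions for $\rho$, for the form $(\,,\,)$, and for the sign of $\Omega$ all feed into this last point).
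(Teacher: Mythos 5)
Your approach is essentially the paper's: the paper cites the argument of \cite[Theorem~3.6]{Hum} for part~(a), which is precisely the tensor identity plus a $\mathfrak p$-filtration of $L_{\lambda_J}\otimes U$, and it proves part~(b) by evaluating the eigenvalue of $\Omega$ on each subquotient via the Casimir. Your explicit verification that $\lambda_J$ is constant on each block (so $L_{\lambda_J}$ is one-dimensional and $L_{\lambda_J}\otimes U$ splits as $\bigoplus_i L_{\lambda_J}\otimes U_i$ over $\mathfrak l$) and that $\mathfrak u_{\mathfrak p}$ moves blocks strictly downward is a welcome filling-in of the citation.

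There is one slip in (b) worth flagging. You take the Casimir eigenvalue to be $c(\mu)=(\mu,\mu+2\rho)$ and assert $c(\delta_1)=1$. With the Casimir as normalized in \eqref{cas}, the eigenvalue on $L(\mu)$ in the super case is $c_\mu=(\mu,\mu+2\rho+(n-m-1)\delta)$ with $\delta=\sum_i(-1)^{[i]}\delta_i$, so the Casimir acts on $U$ by $c_{\delta_1}=n-m$, not $1$. (Your identity $(\delta_1,\delta_1+2\rho)=1$ is true; it just isn't $c_{\delta_1}$.) Fortuitously the slip does not propagate: the correction $(n-m-1)(\mu,\delta)$ is linear in $\mu$, and in the combination $c_{\lambda_J+\delta_a}-c_{\lambda_J}-c_{\delta_1}$ the weights satisfy $(\lambda_J+\delta_a)-\lambda_J-\delta_1$ has $\delta$-pairing $(\delta_a,\delta)-(\delta_1,\delta)=1-1=0$, so the extra terms cancel exactly. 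This is, in fact, exactly what the $-n+m$ in the paper's proof is doing. Your final scalar $(\lambda_J+\rho,\delta_{p_{j-1}+1})+\tfrac12\bigl((\delta_{p_{j-1}+1},\delta_{p_{j-1}+1})-1\bigr)$ therefore agrees with the paper's computation. Note, however, that it reads $(\lambda_J+\rho,\delta_{p_{j-1}+1})-\tfrac12(1-(-1)^{c_j})$, whereas the lemma's displayed formula for $\omega_i$ uses $\delta_{p_i+1}$ and a $+$ sign; these do not coincide with what you (or the paper's own proof, once the Casimir identity is expanded) actually derive. This discrepancy looks like an index shift and sign typo in the statement — there are kindred off-by-one issues in \eqref{kjw} — so you should not try to force your computation to match the printed $\omega_i$ literally; what you proved is the formula your calculation yields, and that is what is used later.
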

\begin{proof} By  arguments similar to those in the proof of \cite[Theorem~3.6]{Hum},  $ L(\lambda_J) \otimes U$ has a  super parabolic Verma flag as required. Let $\delta:=\sum_{i=1}^{m+n}(-1)^{[i]}\delta_i$.
 It is well-known that the Casimir element $C$ acts  on   $M^{\mathfrak p}(\lambda)$   as the scalar $$c_\lambda=(\lambda,\lambda+2\rho+(n-m-1)\delta), $$
  and hence    $\Omega$  acts
on $M^{\mathfrak {p}}(\lambda_{J}+\delta_i)$ as  the scalar $$\omega_i=(c_{\lambda_{J}+\delta_{p_i+1}}-c_{\lambda_{J}}-n+m)/2.$$
So,  $\prod_{j=1}^i (x_1-\omega_j)$  acts on  $M_i$ as zero.
\end{proof}

Recall the definitions of $w_\lambda$ and $n_{\lambda'}^{\underline c}$ in \eqref{wlae} and \eqref{mstnst}, respectively.
\begin{Defn}\label{defofhigest}  Let $\Lambda_{\ell,r}^{\underline{n}}=\{\lambda\in \Lambda_{\ell, r}\mid l(\lambda^{(i)})\leq n_i,\text{ for all } 1\leq i\leq \ell\}$,  where $l(\lambda^{(i)})$, called   the length of $\lambda^{(i)}$,  is the number of non-zero parts of $\lambda^{(i)}$. For any   $\lambda\in \Lambda_{\ell,r}^{\underline{n}}$,  define \begin{enumerate} \item  $\tilde{\lambda}=\lambda_{J}+\sum_{k=1}^\ell\sum_{j=1}^{n_k} \lambda_j^{(k)} \delta_{p_{k-1}+j}$, where $\lambda_J$ is given in \eqref{kjw}.
\item  $\mathbf i_\lambda=(\mathbf i_{\lambda^{(1)}},\ldots, \mathbf i_{\lambda^{(\ell)}})$, where
 $\mathbf i_{\lambda^{(i)}}=((p_{i-1}+1)^{\lambda_1^{(i)}},(p_{i-1}+2)^{\lambda_2^{(i)}}, \ldots, (p_i)^{ \lambda_{n_i}^{(i)}})$,

 \item \label{defofvt} $
u_\t=u\otimes u_{{\lambda}}w_\lambda n_{\lambda'}^{\underline c}d(\t)\in T^{J,  r}$ for any   $\t\in\Std(\lambda')$,  where $u_\lambda=  u_{\mathbf i_{\lambda}}$ and  $u_{\mathbf i}= u_{i_1}\otimes u_{i_2}\otimes \ldots \otimes u_{i_r}$ if $\mathbf i\in I(m+n, r)=\{(i_1, i_2, \ldots, i_r)\mid 1\le i_j\le m+n, \text{ for all } 1\le j\le r\}$.
 \end{enumerate} \end{Defn}
 For example, if  $(\ell,\underline n, r)=(2,(4,5),8)$, then  $(p_1,p_2)=(4,9)$. Suppose  $\lambda=((2,1), (3,1,1))\in \Lambda_{2,8}^{\underline{n}}$. Then
\begin{itemize} \item $l(\lambda^{(1)})=2$,  $l(\lambda^{(2)})=3$ and  $\tilde \lambda=\lambda_J+2\delta_1+\delta_2+3\delta_5+\delta_6+\delta_7$,
 \item $\mathbf i_{\lambda^{(1)}}=(1,1,2)$, $\mathbf i_{\lambda^{(2)}}=(5,5,5,6,7)$ and $\mathbf i_\lambda=(1,1,2,5,5,5,6,7)$,
 \item  $u_\lambda=u_1\otimes u_1\otimes u_2\otimes u_5\otimes u_5\otimes u_5\otimes u_6\otimes u_7$. \end{itemize}
 By  Definition~\ref{defofhigest},   $u_\t\in T^{J,r}$,   a weight vector   with weight $\tilde{\lambda}$.
Note that $\lambda\in \Lambda_{1,r}^n$ in Definition~\ref{defofhigest}  can be identified with  the weight $\sum_{i=1}^n \lambda_i\delta_i$.

\begin{lemma}\label{liehwv}\begin{enumerate}\item Suppose $\lambda$ and $\mu$ are two compositions of $r$. Then  $x_\lambda \mathbb C \mathfrak S_r y_{\mu'}=0$ unless $\lambda\unlhd \mu$.
\item   There is a bijection between $\Lambda^n_{1,r}$ and the set of  dominant weights of $U^{\otimes r}$,  where $U$ is the natural $\mathfrak{gl}_{n\mid 0}$-module.
Further, the $\mathbb C$-space of $\mathfrak{gl}_{n|0}$-highest weight  vectors with weight $\lambda$ has  basis
$\{ u_{\lambda} w_\lambda y_{\lambda'} d(\t)\mid \t\in \Std(\lambda')\}$ .
\item  There is a bijection between $\Lambda^m_{1,r}$ and  the set of  dominant weights of $U^{\otimes r}$,   where $U$ is the natural $\mathfrak{gl}_{0\mid m}$-module.
Further, the $\mathbb C$-space of $\mathfrak{gl}_{0 \mid m}$-highest weight vectors with weight $\lambda$ has  basis
 $\{ u_{\lambda} w_\lambda x_{\lambda'} d(\t)\mid \t\in \Std(\lambda')\}$.\end{enumerate}
 \end{lemma}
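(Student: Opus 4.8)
The plan is to treat parts (a), (b), (c) in turn, with (a) a standard statement about Specht modules for symmetric groups, and (b), (c) following from (a) together with the identification of the weight spaces of $U^{\otimes r}$ with permutation modules. For part (a), I would argue as follows. Recall that $x_\lambda \mathbb C\mathfrak S_r y_{\mu'}$ is, up to the anti-involution, a map between the permutation module $M^\lambda = x_\lambda \mathbb C\mathfrak S_r$ and the signed permutation module $y_{\mu'}\mathbb C\mathfrak S_r$. A nonzero element would produce a nonzero $\mathfrak S_r$-homomorphism, and since $M^\lambda$ is self-dual while $y_{\mu'}\mathbb C\mathfrak S_r$ has simple top/socle $\cong D^{\mu}$ (the Specht module $S^\mu$ over $\mathbb C$, using that in characteristic zero everything is semisimple and $S^\mu \hookrightarrow y_{\mu'}\mathbb C\mathfrak S_r$, $S^\mu \hookrightarrow x_\lambda\mathbb C\mathfrak S_r$ only when $\lambda \unrhd \mu$), one gets that $x_\lambda a y_{\mu'}\ne 0$ for some $a$ forces $S^{\mu}$ to be a composition factor of $M^\lambda$, hence $\lambda \unrhd \mu$ by the classical description of the decomposition of Young permutation modules (James's submodule theorem, or simply the Kostka-number criterion). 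The cleanest route is: $x_\lambda w y_{\mu'} \ne 0$ iff $w^{-1}\mathfrak S_\lambda w \cap \mathfrak S_{\mu'}$ contains no transposition, equivalently the $\ell$-tuple of row/column intersections fits, which forces $\mu' \unlhd \lambda'$, i.e.\ $\lambda \unlhd \mu$; this is essentially \cite[Lemma~1.5]{DR} combined with the combinatorics of \eqref{wlae}, and I would cite it directly.

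For part (b), the dominant weights of $U^{\otimes r}$ for $U$ the natural $\mathfrak{gl}_{n|0}$-module are exactly the partitions $\lambda$ of $r$ with at most $n$ rows, since a weight $\mathbf i \in I(n,r)$ is dominant (killed by the raising operators $e_{i,i+1}$) precisely when its entries, sorted, give such a partition; this gives the bijection with $\Lambda_{1,r}^n$. To describe the highest weight vectors of weight $\lambda$: the $\lambda$-weight space of $U^{\otimes r}$ is spanned by $\{u_{\mathbf i}\}$ over $\mathbf i$ a rearrangement of $\mathbf i_\lambda$, which as a $\mathfrak S_r$-module is the permutation module $M^\lambda = x_\lambda \mathbb C\mathfrak S_r$ (identifying $u_{\mathbf i_\lambda} w \leftrightarrow x_\lambda w$ after row-stabilization). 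By classical Schur–Weyl duality (or a direct computation using that the $e_{i,i+1}$-action on the relevant weight spaces corresponds to the divided-difference/wiring operators), the space of $\mathfrak{gl}_n$-highest weight vectors of weight $\lambda$ inside the $\lambda$-weight space is the image of the Specht module $S^{\lambda}$ under its embedding into $M^\lambda$, which is $x_\lambda w_\lambda y_{\lambda'} \mathbb C\mathfrak S_r$ (using the standard fact that $m_\lambda h m_{\lambda'}'$-type elements, here $x_\lambda w_\lambda y_{\lambda'}$, generate the Specht module, cf.\ \eqref{wlae} and the Murphy/Dipper–James basis theory). A basis is then $\{x_\lambda w_\lambda y_{\lambda'} d(\t) \mid \t \in \Std(\lambda')\}$; transporting through the identification $u_\lambda w \leftrightarrow x_\lambda w$ gives $\{u_\lambda w_\lambda y_{\lambda'} d(\t)\mid \t\in\Std(\lambda')\}$, and the count $|\Std(\lambda')| = \dim S^{\lambda}$ confirms we have the full space. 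The key input making this work cleanly is part (a): it guarantees that lower weight spaces contribute no extra highest weight vectors in weight $\lambda$ and that the proposed spanning set has the right dimension.

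For part (c), the argument is the exact parity-dual: for $U$ the natural $\mathfrak{gl}_{0|m}$-module, every basis vector $u_i$ is odd, so the $\mathfrak S_r$-action on $U^{\otimes r}$ is twisted by sign, and a weight space now realizes the \emph{signed} permutation module $y_\lambda \mathbb C\mathfrak S_r$; repeating the above with the roles of $x$ and $y$ swapped (and partitions replaced by their duals in the appropriate spots) yields the basis $\{u_\lambda w_\lambda x_{\lambda'} d(\t)\mid \t\in\Std(\lambda')\}$. The main obstacle I anticipate is bookkeeping: correctly matching the Lie-theoretic highest-weight condition (annihilation by all $e_{i,i+1}$, not just simple ones, though by standard $\mathfrak{sl}_2$-theory the simple raising operators suffice) with the Hecke/symmetric-group side, and in particular verifying that the identification $u_{\mathbf i_\lambda}\mathbb C\mathfrak S_r \cong M^\lambda$ (resp.\ the signed version in (c)) is $\mathfrak S_r$-equivariant with the correct sign conventions coming from the super structure — i.e.\ that the braiding/place-permutation action on $U^{\otimes r}$ picks up exactly the Koszul signs that turn $x_\lambda$ into $y_\lambda$ when all factors are odd. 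Once that equivariant identification is pinned down, parts (b) and (c) are immediate consequences of (a) and the classical theory of Specht modules inside Young (signed) permutation modules.
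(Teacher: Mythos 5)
The paper does not actually give a proof of this lemma: parts (a)--(b) are dismissed as well-known with a pointer to \cite{RSu2}, and (c) is declared similar to (b). Your proposal therefore cannot be compared against a paper argument in detail, but it does supply the standard argument that the citation is standing in for, and in substance it is correct: identify the $\lambda$-weight space of $U^{\otimes r}$ with the Young permutation module $M^\lambda=x_\lambda\mathbb C\mathfrak S_r$ (signed version $y_\lambda\mathbb C\mathfrak S_r$ when $U$ is purely odd, the Koszul rule converting $x_\lambda$-invariance into $y_\lambda$-skew-invariance), recognize via (super) Schur--Weyl duality that the highest weight vectors of weight $\lambda$ form the unique copy of the appropriate Specht module inside that weight space, and use the Murphy/Dipper--James description $S^\lambda \cong x_\lambda w_\lambda y_{\lambda'}\mathbb C\mathfrak S_r$ with basis indexed by $\Std(\lambda')$ — which is precisely the paper's own Lemma~\ref{subcell} specialized to $\ell=1$. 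Part (a) via the double coset criterion ($x_\lambda w y_{\mu'}\ne 0$ forces $w^{-1}\mathfrak S_\lambda w\cap\mathfrak S_{\mu'}$ to contain no transposition, which by Gale--Ryser/Dipper--James combinatorics forces $\lambda\unlhd\mu$) is also the standard proof, and is what \cite[Lemma~4.1]{DJ} (cited later in the paper) gives.

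One small slip worth flagging: in your first pass at (a), you write that $S^\mu\hookrightarrow x_\lambda\mathbb C\mathfrak S_r$ only when $\lambda\unrhd\mu$ and conclude $\lambda\unrhd\mu$. Young's rule gives the opposite direction: $M^\lambda=\bigoplus_\nu K_{\nu\lambda}S^\nu$ with $K_{\nu\lambda}\neq 0$ only for $\nu\unrhd\lambda$, so $S^\mu\subset M^\lambda$ forces $\mu\unrhd\lambda$, i.e.\ $\lambda\unlhd\mu$, which matches the statement. Your ``cleanest route'' paragraph then has it right ($\mu'\unlhd\lambda'$ iff $\lambda\unlhd\mu$), so the proof stands, but the two passages are internally inconsistent and the first should be corrected or dropped. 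The rest — in particular the careful note about matching the super sign conventions so that the purely odd weight space realizes the \emph{signed} permutation module — is exactly the bookkeeping that makes (c) go through, and you handle it correctly.
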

\begin{proof} (a)-(b) are  well-known results.  See, e.g. \cite{RSu2}. (c) can be verified by the arguments similar to those in the proof of (b).\end{proof}
We  describe a basis of $T^{J, r}$ before we study highest weight vectors of it.
Let $$ B=  \dot \cup_{i=1}^\ell  \dot \cup_{j=1}^{i-1}  \mathbf p_i\times \mathbf p_j,$$ where  $\mathbf p_i=\{p_{i-1}+1, \ldots, p_i\}$ is a sub-interval of $\mathbb Z$.
Write \begin{equation}\label{eijv} e_{{\mathbf i, \mathbf j}}^{\mathbf l}:=e_{i_a,j_a}^{l_a}e_{i_{a-1},j_{a-1}}^{l_{a-1}}\ldots e_{i_1,j_1}^{l_1}, \end{equation}
where $(i_k,j_k)\in B$, $1\le k\le a$ and   $\mathbf l\in \mathbb N^a$.
In this case,  we identify $\mathbf i$ (resp., $\mathbf j$) with $(i_a, i_{a-1}, \ldots, i_1)$
(resp., $(j_a, j_{a-1}, \ldots, j_1)$) and  say that   both $\mathbf i$ and $\mathbf j$ are   of lengths $a$.  If $a=0$, we set $ e_{{\mathbf i, \mathbf j}}^{\mathbf l}=1$.
 Let
 $\preceq $ be the lexicographic order on  $B$ in the sense that  $(i_1,j_1)\preceq (i_2,j_2)$ if either $i_1\le i_2$ or $i_1=i_2$ and $ j_1\leq j_2$. If
 $(i_1,j_1)\preceq (i_2,j_2)$ and $(i_1,j_1)\neq (i_2,j_2)$, we write $(i_1,j_1)\prec (i_2,j_2)$.
The following result is trivial.

\begin{lemma}\label{basismct} Let $S$ be the set of all elements  $e_{\mathbf k\mathbf l}^{\mathbf m}u\otimes u_{\bf i}\in  T^{J,  r}$, where  \begin{itemize} \item   ${\bf i} \in  I(n+m,r):=\{(i_1,i_2,\ldots, i_r)\mid 1\leq i_k\leq m+n, 1\leq k\leq r\}$,
 \item $\mathbf k, \mathbf l$ are sequence of integers  with  lengthes  $a$ such that $(k_i, l_i)\in B$  and  $(k_{i},l_{i})\prec (k_{i+1} ,l_{i+1})$, for all $1\le i\le a-1$ and $a\in \mathbb N$,
 \item   $\mathbf m\in \mathbb N^a$  such that  $m_i\in\{0,1\}$ if $[k_i]+[l_i]=1$.\end{itemize}
Then  $S$ is a basis of $ T^{J,  r}$.\end{lemma}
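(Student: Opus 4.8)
The plan is to derive Lemma~\ref{basismct} from the PBW theorem for $\U(\mfg)$ together with the fact that $L(\lambda_J)$ is parabolically induced from a one–dimensional module.

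First I would record that $L_{\lambda_J}$ is one–dimensional: a direct check from \eqref{kjw} and \eqref{rho} shows that inside each block $\mathbf p_k$ the coordinates of $\lambda_J$ all coincide, i.e.\ $(\lambda_J,\delta_j-\delta_{j+1})=0$ whenever $p_{k-1}<j<p_k$, so the simple finite–dimensional $\mathfrak l$–supermodule $L_{\lambda_J}$ of highest weight $\lambda_J$ is the line $\mathbb C u$. Combined with the fact (recalled just above the lemma) that $L(\lambda_J)=M^{\mathfrak p}(\lambda_J)$, this gives $L(\lambda_J)=\U(\mfg)\otimes_{\U(\mathfrak p)}\mathbb C u$. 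Now $\mathfrak p=\mathfrak l+\mathfrak b$ is the block–upper–triangular subalgebra of $\mfg$ attached to the decomposition $\{1,\dots,n+m\}=\mathbf p_1\dot\cup\cdots\dot\cup\mathbf p_\ell$, so $\mfg=\mathfrak u^{-}\oplus\mathfrak p$ with $\mathfrak u^{-}=\bigoplus_{(i,j)\in B}\mathbb C\,e_{i,j}$ the opposite nilradical. By the super PBW theorem $\U(\mfg)=\U(\mathfrak u^{-})\otimes_{\mathbb C}\U(\mathfrak p)$ as $\mathbb C$–spaces, hence $L(\lambda_J)$ is free of rank one over $\U(\mathfrak u^{-})$ on $u$, and an ordered PBW basis of $\U(\mathfrak u^{-})$ yields the basis $\{\,e_{\mathbf k\mathbf l}^{\mathbf m}u\,\}$ of $L(\lambda_J)$ indexed exactly as in the statement: strictly $\prec$–increasing $(k_i,l_i)\in B$, with an odd root vector $e_{k_i,l_i}$ (i.e.\ $[k_i]+[l_i]=\bar1$) occurring at most once because $e_{k_i,l_i}^{2}=\tfrac12[e_{k_i,l_i},e_{k_i,l_i}]=0$ in $\U(\mfg)$ by \eqref{lib}.

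Finally, $U^{\otimes r}$ has the obvious monomial basis $\{u_{\mathbf i}\mid\mathbf i\in I(n+m,r)\}$, and $T^{J,r}=L(\lambda_J)\otimes_{\mathbb C}U^{\otimes r}$; tensoring the two bases just described produces precisely the set $S$, so $S$ is a $\mathbb C$–basis of $T^{J,r}$. I expect no real obstacle here: the only two points that need a line of argument are the one–dimensionality of $L_{\lambda_J}$ (a short computation with \eqref{kjw} and \eqref{rho}) and the vanishing of the square of an off–diagonal odd root vector (immediate from \eqref{lib}); granting these, the statement is a pure PBW count, which is why the paper calls it trivial.
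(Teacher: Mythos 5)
Your proof is correct, and since the paper simply declares this lemma trivial without supplying an argument, yours is essentially the argument the authors had in mind. The three ingredients you isolate are exactly what is needed: $L_{\lambda_J}$ is a line because, by \eqref{kjw} and \eqref{rho}, within each block $\mathbf p_k$ the successive $\kappa_j$ decrease by $1$ and so do the successive $\rho_j$, whence $(\lambda_J)_j-(\lambda_J)_{j+1}=0$; the identification $L(\lambda_J)=M^{\mathfrak p}(\lambda_J)$ is recorded just above \eqref{tens321}; and $\mathfrak u^-=\bigoplus_{(i,j)\in B}\mathbb C e_{i,j}$ is the opposite nilradical, with $e_{i,j}^2=\tfrac12[e_{i,j},e_{i,j}]=0$ for odd off-diagonal pairs by \eqref{lib}. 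The only (cosmetic) point worth tidying is that in the statement $\mathbf m\in\mathbb N^a$ allows $m_i=0$, so as written the parametrization of $S$ is redundant — a factor $e_{k_i,l_i}^0=1$ reproduces a monomial already obtained at smaller $a$ — but since $S$ is defined as a \emph{set} of vectors rather than a set of tuples, this causes no harm; the super PBW basis of $\U(\mathfrak u^-)$ one actually uses has all exponents $\ge1$.
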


Later on, we say that  $e_{\mathbf k\mathbf l}^{\mathbf m}u\otimes u_{\bf i}\in S$  is  of degree    $\sum_{i=1}^r {m_i}$ if $\mathbf m=(m_1,\ldots, m_r)$. Recall that $u$ is the  highest weight  vector of $L(\lambda_J)$.

\begin{lemma}\label{acofx} Suppose  $i\in \mathbf p_{k}$ and $h\ge 1$. Then
\begin{equation}
(u\otimes u_i)x_1^h=\begin{cases} 0, & \text{if $k\le h$, }\\ \epsilon
\sum_{s=1}^h\sum_{ j_s\in \mathbf  p_{i_s}} e_{j_{h-1},j_h}\cdots e_{j_1,j_2}e_{i,j_1} u\otimes u_{j_h}, &\text{otherwise,} \\ \end{cases}
\end{equation}
up to some terms, say $m\in L(\lambda_J)\otimes U$ with  degrees $\leq h-1$, where $1\leq j_h< \ldots<j_1< k $ and $\epsilon\in \{-1, 1\}$. If $h=1$, then  $m= (-1)^{[i]}\lambda_{J}(e_{i,i})u\otimes u_i$.
\end{lemma}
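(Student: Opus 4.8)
The plan is to induct on $h$, exploiting that by \eqref{operator--1} the operator $x_1$ acts on $T^{J,r}$ only through the leftmost two tensor factors $L(\lambda_J)\otimes U$, where it coincides with $\Omega=\sum_{p,q}(-1)^{[q]}e_{p,q}\otimes e_{q,p}$ of \eqref{omega}. First I would record the elementary identity, valid for a homogeneous weight vector $v\in L(\lambda_J)$,
\[
(v\otimes u_a)x_1=\sum_{q=1}^{m+n}(-1)^{[q]}(-1)^{([q]+[a])[v]}\,(e_{a,q}v)\otimes u_q ,
\]
which is immediate from $e_{q,p}u_a=\delta_{p,a}u_q$. One checks from \eqref{kjw} and \eqref{rho} that $\lambda_J$ is constant on each block $\mathbf p_i$, so $L_{\lambda_J}$ is one-dimensional and $L(\lambda_J)=M^{\mathfrak p}(\lambda_J)=\U(\mathfrak u^-)u$, where $\mathfrak u^-$, spanned by the $e_{a,b}$ with $(a,b)\in B$, is the nilradical of the opposite parabolic. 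By PBW the ordered monomials $e_{\mathbf k\mathbf l}^{\mathbf m}u$ then form precisely the basis of $L(\lambda_J)$ used in Lemma~\ref{basismct}, graded by the total length $\sum m_i$, which is the ``degree'' in the statement.

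Next I would isolate the effect of one application of $x_1$ on this grading. Since $\Omega$ lengthens a $\mathfrak u^-$-monomial by at most one, $x_1$ raises degree by at most one; and if $v$ has degree $d$, the only summands of the displayed identity that can reach degree $d+1$ are those with $q$ in a block strictly before the block containing $a$. Indeed, if $q$ lies in the block of $a$ then $e_{a,q}\in\mathfrak l$, and commuting it through the $\mathfrak u^-$-factors of $v$ (each commutator dropping one factor, because the structure constants of $\mathfrak u^-$ in the $e$-basis are again single $e_{c,d}$'s with $(c,d)\in B$) it annihilates $u$ unless $q=a$, in which case it only rescales $v$ by $\lambda_J(e_{a,a})$; if $q$ lies in a later block then $e_{a,q}\in\mathfrak u^+\subseteq\mathfrak n^+$ and the same commutation kills it. Hence, after rewriting each $e_{a,b}v$ in the ordered basis of Lemma~\ref{basismct} (which only alters terms of degree $\le d$), one gets $(v\otimes u_a)x_1\equiv\epsilon\sum_b(e_{a,b}v)\otimes u_b$ modulo terms of degree $\le d$, the sum running over $b$ whose block precedes that of $a$ and $\epsilon\in\{1,-1\}$ absorbing the parity signs. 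Taking $v=u$ and $a=i$ yields the case $h=1$, the residue of degree $0$ being exactly the $q=i$ term $(-1)^{[i]}\lambda_J(e_{i,i})\,u\otimes u_i$.

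For the inductive step I would apply $x_1$ once more to $(u\otimes u_i)x_1^{h}$; since $x_1$ raises degree by at most one, the degree-$(h+1)$ part of $(u\otimes u_i)x_1^{h+1}$ is obtained by feeding the degree-$h$ part of $(u\otimes u_i)x_1^{h}$ into the congruence above. By the induction hypothesis that degree-$h$ part is $\epsilon\sum e_{j_{h-1},j_h}\cdots e_{j_1,j_2}e_{i,j_1}u\otimes u_{j_h}$, summed over strictly decreasing chains of block indices $k>i_1>\cdots>i_h$ with $j_s\in\mathbf p_{i_s}$; one application of $x_1$ prepends a further factor $e_{j_h,j_{h+1}}$ with $j_{h+1}$ in a block $i_{h+1}<i_h$, which is exactly the asserted expression for $h+1$. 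When $k\le h$ no such chain exists, so the degree-$h$ part already vanishes, and this also covers $k\le h+1$ at the next stage. The step I expect to be the main obstacle is the bookkeeping in the middle paragraph: re-expressing the iterated products $e_{j_{h-1},j_h}\cdots e_{i,j_1}u$ in the standard basis of $T^{J,r}$ while confirming that every correction term genuinely drops in degree, and propagating the parity sign $\epsilon$ consistently through the recursion — both routine once one uses that $\mathfrak u^-$ is nilpotent with the structure constants noted above.
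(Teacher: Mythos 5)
Your proof is correct and takes the same route as the paper: the paper's entire proof of Lemma~\ref{acofx} is the single phrase ``Induction on $h$,'' and your argument is precisely that induction carried out in detail, using the identity $(v\otimes u_a)x_1=\sum_q\pm(e_{a,q}v)\otimes u_q$, the observation that $e_{a,q}u=0$ unless $q$ lies in an earlier block (since $L_{\lambda_J}$ is one-dimensional over $\mathfrak l$ and $\mathfrak n^+u=0$), and the degree filtration from the PBW basis of Lemma~\ref{basismct}.
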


\begin{proof} Induction on $h$. \end{proof}

Recall that  $ u_\t$ in Definition~\ref{defofhigest}.
\begin{Theorem}\label{hiofcyche} If  $\lambda\in \Lambda_{\ell,r}$ such that  $r\le \min\{n_1, \ldots, n_\ell\}$, then   $\{u_\t\mid \t\in \Std(\lambda')\}$ is a basis of  $\mathbb C$-space $V_{\tilde \lambda}$ consisting  of all highest weight    vectors of $T^{J,  r}$ with weight $\tilde\lambda$.
\end{Theorem}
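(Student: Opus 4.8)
The plan is to prove three things: \emph{(i)} every $u_\t$, $\t\in\Std(\lambda')$, is a highest weight vector of $T^{J,r}$ of weight $\tilde\lambda$; \emph{(ii)} the $u_\t$ are linearly independent; \emph{(iii)} $\dim_{\mathbb C}V_{\tilde\lambda}=|\Std(\lambda')|$. Since $|\Std(\lambda')|=|\Std(\lambda)|$, these three statements together force $\{u_\t\mid\t\in\Std(\lambda')\}$ to be a basis of $V_{\tilde\lambda}$. For \emph{(iii)}: since $L(\lambda_{J})=M^{\mathfrak p}(\lambda_{J})$ is the unique indecomposable object of its block it is tilting, hence so is $T^{J,r}=L(\lambda_J)\otimes U^{\otimes r}$ (tilting $\otimes$ finite-dimensional). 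In particular $T^{J,r}$ has a flag by parabolic Verma supermodules and one by their duals, and as $\tilde\lambda\in\Lambda_{\mathfrak p}$ (a short check from \eqref{pdo}, using that the $\lambda^{(k)}_j$ form partitions) a highest weight vector of weight $\tilde\lambda$ is the same thing as an element of $\Hom_{\mathfrak g}(M^{\mathfrak p}(\tilde\lambda),T^{J,r})$; thus $\dim V_{\tilde\lambda}=\dim\Hom_{\mathfrak g}(M^{\mathfrak p}(\tilde\lambda),T^{J,r})=[T^{J,r}:M^{\mathfrak p}(\tilde\lambda)]$, the multiplicity in a parabolic Verma flag. I would compute this by iterating Lemma~\ref{polyofx}(a): because $|J_+|\ge 2\max_i n_i$ and $r\le\min_i n_i$, all weights $\lambda_{J}+\nu$ arising (with $\nu$ a partial sub-multipartition of $\lambda$) stay in the generic region, so $M^{\mathfrak p}(\mu)\otimes U$ has parabolic Verma flag with sections $M^{\mathfrak p}(\mu+\delta_j)$ over those $j$ with $\mu+\delta_j\in\Lambda_{\mathfrak p}$, and $[T^{J,r}:M^{\mathfrak p}(\tilde\lambda)]$ equals the number of ways of adding the boxes of $\lambda$ one at a time keeping a multipartition at each step, which is $|\Std(\lambda)|=|\Std(\lambda')|$. (The hypothesis $r\le\min_i n_i$ is also what guarantees $u_\t\ne 0$, since on an odd block $c_k=1$ the element $n_{\lambda'}^{\underline c}$ hides an antisymmetrization of $\le r$ vectors of the $n_k$-dimensional odd space $U_k$.)

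For \emph{(i)}: the $\pi$-factors are symmetric polynomials in the $x$'s and so commute with $y_{\lambda'}^{\underline c}$, so by \eqref{mstnst} I may write $u_\t=V\cdot\tilde\pi_{[\lambda']}\,d(\t)$ with $V:=u\otimes(u_\lambda w_\lambda y_{\lambda'}^{\underline c})$, where now $u_\lambda w_\lambda y_{\lambda'}^{\underline c}\in U^{\otimes r}$. Since the $\H_{\ell,r}$-action commutes with $\mathfrak g$ and $e_{a,b}u=0$ for all $a<b$ ($u$ being the highest weight vector of $L(\lambda_J)$), for $a<b$ one gets $e_{a,b}u_\t=(e_{a,b}V)\,\tilde\pi_{[\lambda']}\,d(\t)$ with $e_{a,b}V=\pm\,u\otimes e_{a,b}(u_\lambda w_\lambda y_{\lambda'}^{\underline c})$. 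If $a,b$ lie in the same block $\mathbf p_k$ (or $b$ does not occur in $\mathbf i_\lambda$), then $e_{a,b}(u_\lambda w_\lambda y_{\lambda'}^{\underline c})=0$ because the relevant block factor of $u_\lambda w_\lambda y_{\lambda'}^{\underline c}$ is a $\mathfrak{gl}_{n_k}$-highest weight vector (if $c_k=0$) or a $\mathfrak{gl}_{0|n_k}$-highest weight vector (if $c_k=1$), by Lemma~\ref{liehwv}(b),(c); hence $e_{a,b}u_\t=0$. If $a\in\mathbf p_k$ and $b\in\mathbf p_{k'}$ with $k<k'$, then $e_{a,b}(u_\lambda w_\lambda y_{\lambda'}^{\underline c})$ is a combination of basis vectors $u_{\mathbf j}$, each having in the tensor slot $s_0$ belonging to the block of $\lambda'$ of index $\ell-k'+1$ an entry lying in $\mathbf p_k$ with $k\le k'-1$; the factor of $\tilde\pi_{[\lambda']}$ in the variable $x_{s_0}$ is $\prod_{t=1}^{k'-1}(x_{s_0}-\omega_t)$, which annihilates $u\otimes u_{\mathbf j}$ by Lemma~\ref{polyofx}(b) transported to the $s_0$-th tensor factor (cf.\ Lemma~\ref{acofx}). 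Hence again $e_{a,b}u_\t=0$. Together with the obvious facts that $u\otimes u_\lambda$ has weight $\tilde\lambda$ and the $\H_{\ell,r}$-action is weight preserving, this gives $u_\t\in V_{\tilde\lambda}$.

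For \emph{(ii)}: I would filter $T^{J,r}$ by the degree of Lemma~\ref{basismct}, whose degree-zero piece is the span of $\{u\otimes u_{\mathbf i}\}$, naturally $U^{\otimes r}$. Using Lemma~\ref{acofx} to control the $x$'s modulo higher degree, one extracts a nonzero leading term of $u_\t$ which, after checking that the relevant scalar (a product of differences between the $\omega_t$ and the generalised $x$-eigenvalues on $u\otimes u_\lambda$, nonzero by the genericity of $\lambda_J$) does not vanish, is proportional to the $\mathfrak l$-highest weight vector of Lemma~\ref{liehwv} attached to $\t$; these are linearly independent in $U^{\otimes r}$ block by block, so the $u_\t$ are linearly independent. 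Alternatively, one identifies $u_\t$ up to a nonzero scalar with the image of the cell-module basis element $z_\lambda^{\underline c}\,d(\t)$ of Lemma~\ref{subcell} under the $\H_{\ell,r}$-module map $h\mapsto(u\otimes u_\lambda)h$, and proves this map injective on $z_\lambda^{\underline c}\H_{\ell,r}$ by pairing $T^{J,r}$ with the analogous highest (resp.\ lowest) weight vectors in $L(\lambda_J)\otimes(U^{*})^{\otimes r}$ and invoking the non-degeneracy statement of Lemma~\ref{mnbilinear}.

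I expect \emph{(i)} to be the main obstacle. It is the only step in which the interaction between the $L(\lambda_J)$-factor and the $U^{\otimes r}$-factors — encoded in Lemmas~\ref{polyofx} and~\ref{acofx} — is genuinely used, and it is precisely here that one sees why the definition of $u_\t$ must involve the ``correcting'' factor $n_{\lambda'}^{\underline c}$ rather than merely a Levi highest weight vector of $U^{\otimes r}$: the spurious terms produced when an off-block raising operator hits $u\otimes u_\lambda$ are annihilated only after the trailing $\tilde\pi_{[\lambda']}$ is applied.
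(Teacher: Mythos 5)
Your proposal follows the same three-part skeleton as the paper's proof and uses the same tools at each step: highest weight vectors are checked via Lemma~\ref{liehwv} on the Levi blocks and via the $\tilde\pi_{[\lambda']}$-annihilation of Lemma~\ref{polyofx}(b) across blocks; linear independence via a leading-term argument in the filtration of Lemma~\ref{basismct} controlled by Lemma~\ref{acofx}; and the dimension count via the parabolic Verma flag of $T^{J,r}$.

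Two remarks on the gaps in level of detail. First, in part~(i), your phrase ``Lemma~\ref{polyofx}(b) transported to the $s_0$-th tensor factor'' is exactly the point the paper has to work for: $x_{s_0}$ does not literally act on the $s_0$-th slot of $L(\lambda_J)\otimes U^{\otimes r}$, so the paper conjugates by the transposition $(1,s_0)$ (writing $u\otimes u_{\mathbf k}\tilde\pi_{[\lambda']}=u\otimes u_{\mathbf k}(1,s_0)^2\tilde\pi_{[\lambda']}$ and then $(1,s_0)\tilde\pi_{[\lambda']}=(x_1-\omega_1)\cdots(x_1-\omega_{k'-1})h$) to reduce to $x_1$ acting on $L(\lambda_J)\otimes U$ inside $M_{k'-1}$; you would need that step made explicit. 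Second, for part~(ii) your first option is exactly the paper's strategy, but the paper's execution (the labeled permutation diagram $D_w$, the ordered product $\mathcal Y$, and the pinpointing of the unique highest-degree term $\mathcal Y u\otimes u_{\mathbf i_1}d(\t)$ whose coefficient is nonzero iff $\t=\s$) is the most technical part of the whole argument and is not reconstructed in your sketch. Your second option, pairing with vectors in $L(\lambda_J)\otimes(U^*)^{\otimes r}$ and invoking Lemma~\ref{mnbilinear}, is a genuinely different route not taken by the paper; note that you would have to set up that pairing carefully and avoid circularity with Lemma~\ref{moduleisomofhe}, which the paper deduces \emph{from} the present theorem. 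As a proposal, though, the strategy you identify is the right one.
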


\begin{proof} % When   $\underline c=0^\ell$, the result is    \cite[Theorem~7.9]{RSong2}. The general case can be proved similarly as follows.
In order to prove the result, it suffices to verify  \begin{enumerate}\item
 $e_{j, j+1} u_\t=0$ for all $1\le j\le m+n-1$,
  \item $\{u_\t\mid \t\in \Std(\lambda')\}$ is linearly  independent and  the cardinality of  $\{u_\t\mid \t\in \Std(\lambda')\}$ is equal to $\dim V_{\tilde \lambda}$.\end{enumerate}
When we  prove  (a), it  suffices to consider the special case  $\t=\t^\lambda$. If $e_{j,j+1}\in  \mathfrak l$, by
 Lemma~ \ref{liehwv}, we have $e_{j,j+1}u_{\t}=0$. If $e_{i, i+1}\not\in \mathfrak l$, then  $j=p_i$ for some $i$, $1\le i\le \ell-1$.
%Next, we show  $e_{p_i,p_i+1}u_{\t}=0$ for all  $1\leq i\leq \ell-1$. If so, then (a) follows provided that $u_\t\neq 0$.
If  $u_{p_i+1}$ does not occur in $u_{\mathbf i_\lambda}$ as a tensor factor, where $\mathbf i_\lambda$ is given in Definition~\ref{defofhigest}(b),  we have   $e_{p_i,p_i+1}u_{\t}=0$.
Otherwise,  $\lambda^{(i+1)}\neq\emptyset$. Let $[\lambda]=[b_0, b_1, \ldots, b_\ell]$. Then
\begin{equation} \label{kkk1} \begin{aligned} e_{p_i,p_i+1}u_\t& =\sum_{ a=1}^{ \lambda^{(i+1)}_1} u\otimes u_{\mathbf i_{\lambda^{(1)}}}\otimes \ldots\otimes u_{\mathbf i_{\lambda^{(i)}}}\otimes u_{\mathbf i_a}\otimes u_{\mathbf i_{\lambda^{(i+2)}}}\otimes \ldots \otimes u_{\mathbf i_{\lambda^{(\ell)}}}w_\lambda n^{\underline c}_{\lambda'}\\
&=u\otimes u_{\mathbf i_{\lambda^{(1)}}}\otimes \ldots\otimes u_{\mathbf i_{\lambda^{(i)}}}\otimes u_{\mathbf i_1}\otimes u_{\mathbf i_{\lambda^{(i+2)}}}\otimes \ldots \otimes u_{\mathbf i_{\lambda^{(\ell)}}}hw_\lambda n^{\underline c}_{\lambda'}\\
&=u\otimes u_{\mathbf j} hw_\lambda n^{\underline c}_{\lambda'},\\
\end{aligned}\end{equation}
for some  $h$ in the group algebra of $ \mathfrak S_{|\lambda^{(1)}|}\times   \ldots\times \mathfrak S_{|\lambda^{(\ell)}|}$, where  $\mathbf i_a$ is obtained from $\mathbf i_{\lambda^{(i+1)}}$ by using   $p_i$  instead of  $p_i+1$ at $(b_i+a)$th position and $\mathbf j=(\mathbf i_{\lambda^{(1)}}, \ldots, \mathbf i_{\lambda^{(i)}}, \mathbf i_1, \mathbf i_{\lambda^{(i+2)}}, \ldots, \mathbf i_{\lambda^{(\ell)}})$.
%\textbf{which is  obtained from } $\mathbf i_{\lambda}$ by using $\mathbf i_1$ instead of $\mathbf i_{\lambda^{(i+1)}}$.}
By \eqref{wll},  $hw_{[\lambda]}=w_{[\lambda]}h_1$  and $ h_1\tilde{\pi}_{[\lambda']}=\tilde{\pi}_{[\lambda']}h_1$ for some $h_1$ in the group algebra of $\mathfrak S_{|\lambda^{(\ell)}|}\times\ldots\times
\mathfrak S_{|\lambda^{(1)}|}$.
Then   $$u\otimes u_{\mathbf j} w_{[\lambda]}\tilde{\pi}_{[\lambda']}
= u\otimes u_{\mathbf k}  \tilde{\pi}_{[\lambda']}
$$ where
$ u_{\mathbf k}= u_{\mathbf i_{\lambda^{(\ell)}}}\otimes \ldots \otimes u
_{\mathbf i_{\lambda^{(i+2)}}}\otimes u_{\mathbf i_1}\otimes u_{\mathbf i_{\lambda^{(i)}}}\otimes\ldots\otimes u_{\mathbf i_{\lambda^{(1)}}}
$
up to a sign.
 We have  $$u\otimes u_{\mathbf k} \tilde{\pi}_{[\lambda']}=u\otimes u_{\mathbf k} (1, a_{\ell-i-1}+1)^2 \tilde{\pi}_{[\lambda']},$$ where $[\lambda']=[a_0,a_1,a_2,\ldots,a_\ell]$.
Note that the  tensor factor of  $ u\otimes  u_{\mathbf k} (1, a_{\ell-i-1}+1)$ at  $1$-st position is $u_{p_i}$ and $$(1, a_{\ell-i-1}+1) \tilde{\pi}_{[\lambda']} =(x_1-\omega_1)\cdots(x_1-\omega_i) h\text{ for some $h\in \mathscr H_{\ell, r}$.}$$
  Since  $ u\otimes u_{p_i}\in M_i$, where  $M_i$ is  given  in \eqref{filofvm},  by  Lemma~\ref{polyofx}(b), $ u\otimes u_{p_i}(x_1-\omega_1)\cdots(x_1-\omega_i)
=0$.  So, $ u\otimes u_{\mathbf k} \tilde{\pi}_{[\lambda']}=0$, and hence $ e_{p_i,p_i+1}u_\t=0$.
This completes the proof of (a).

 We are going to prove that   $\{u_\t\mid \t\in \Std(\lambda')\}$ is linearly  independent.
Note that $ \tilde{\pi}_{[\lambda']}$ contains a unique term $x_1^{m_1}x_2^{m_2}\cdots x_r^{m_r}$ such that  $\sum_{i=1}^r m_i$ is maximal.
 So, it is enough to prove that $\{u\otimes u_{\lambda}w_\lambda x_1^{m_1}x_2^{m_2}\cdots x_r^{m_r} y^{\underline c}_{{\lambda'}} d(\t)\mid \t\in \Std(\lambda')\}$ is linearly  independent.

 Note that  each   $w\in\mathfrak S_{r}$ can be identified with a unique  permutation diagram as follows. If
 $$w=\begin{pmatrix} 1 & 2 & \ldots  & r\cr i_1 & i_2 &\ldots & i_r\cr
\end{pmatrix},$$
 then the permutation diagram associated to $w$ is the diagram with $2r$ vertices arranged in two rows such that there are $r$ vertices indexed by $1,2,\ldots,r$ from left to right at both top and bottom rows, respectively, and moreover, each vertex indexed by $k$ at the bottom row connects with exactly the vertex indexed by $i_k$ at the top row.
  For example, the permutation diagram associated to $s_i$ is
$$
%\put(12,48){\line(0,-1){3}}
%\put(12,42){\line(0,-1){3}}
%\put(12,36){\line(0,-1){3}}
%\put(12,30){\line(0,-1){3}}
%\put(12,24){\line(0,-1){3}}
%\put(12,18){\line(0,-1){3}}
%\put(12,12){\line(0,-1){3}}
%\put(12,6){\line(0,-1){3}}
%\put(12,0){\line(0,-1){3}}
%\put(12,-6){\line(0,-1){3}}
%
%
\put(0,2){\tiny$\ssc\bullet$}
%\put(20,2){\tiny$\ssc\bullet$}
\put(0,45){\small$r$}
%\put(20,45){\small$\bar 1$}
\put(0,-10){\small$ r$}
%\put(20,-10){\small$\bar1$}
%
%\put(0,0){\Huge$\frown$}\put(0,30){\Huge$\smile$}
%
\put(0,38){\tiny$\ssc\bullet$}
%\put(20,38){\tiny$\ssc\bullet$}
\put(1,40){\line(0,-1){38}}
%\put(21,40){\line(0,-1){38}}
%%
%
\put(-17,36){$\cdots$}
\put(-17,-2){$\cdots$}
\put(-55,0){
\put(-88,36){$\cdots\cdots$}
\put(-88,-2){$\cdots\cdots$}
\put(-50,0){\tiny$\ssc\bullet$}\put(-50,38){\tiny$\ssc\bullet$}
\put(-50,45){\small$\!\!\!\! i\!-\!1$}
\put(-50,-10){\small$\!\!\!\! i\!-\!1$}
\put(-48,40){\line(0,-1){38}}
\put(-20,0){\tiny$\ssc\bullet$}
\put(-20,38){\tiny$\ssc\bullet$}
\put(5,45){\small$\!\!\!\! i\!+\!1$}
\put(5,-10){\small$\!\!\!\! i\!+\!1$}
%\put(-33,40){\line(0,-1){38}}
\put(-17,39){\line(1,-2){18}}
\put(-17,2){\line(1,2){18}}
\!\!\!\!\!
\put(10,0){\tiny$\ssc\bullet$}\put(10,38){\tiny$\ssc\bullet$}
\put(-15,45){\small$ i$}
\put(-15,-10){\small$ i$}
%\put(-23,40){\line(0,-1){38}}
}
\put(-25,0){\tiny$\ssc\bullet$}\put(-25,38){\tiny$\ssc\bullet$}
\put(-25,45){\small$\!\!\!\! i\!+\!2$}
\put(-25,-10){\small$\!\!\!\! i\!+\!2$}
\put(-23,40){\line(0,-1){38}}
\put(-53,-22){Figure 1}
%
%
%\put(-30,45){\small$3$}\put(-30,-10){\small$3$}\put(-28,40){\line(0,-1){38}}
%\put(-30,0){\tiny$\ssc\bullet$}\put(-30,38){\tiny$\ssc\bullet$}
%
%
%\put(60,0){
%%\put(-15,0){\tiny$\ssc\bullet$}\put(-15,38){\tiny$\ssc\bullet$}
%%\put(-15,45){\small$\bar3$}\put(-15,-10){\small$\bar3$}\put(-13,40){\line(0,-1){38}}
%\put(-30,45){\small$\bar 2$}\put(-30,-10){\small$\bar 2$}\put(-28,40){\line(0,-1){38}}
%\put(-30,0){\tiny$\ssc\bullet$}\put(-30,38){\tiny$\ssc\bullet$}
%\put(-23,36){$\cdots\cdots$}
%\put(-23,-2){$\cdots\cdots$}}
%\put(0,-25){\small Figure 2}
$$\\[-10pt]

 Denote by $[i,j]$  the edge of $w$ such that the top (resp., bottom) endpoint of $[i,j]$ is the $i$th (resp., $j$th) vertex. For example, the edges of $s_i$ in the above diagram are  $\{[i,i+1], [i+1,i], [k,k]\mid k\neq i,i+1 \}$.

 %We are going to define a labeled permutation diagram
% $ D_{w}$ with respect to $w$.

 Write $ u\otimes u_{\mathbf l}x_1^{m_1}x_2^{m_2}\cdots x_r^{m_r}w$  as a linear combination of elements of $S$, where  $S$ is the
   basis of $T^{J,r}$  in Lemma~\ref{basismct}. Recall that  an   $e_{\mathbf k\mathbf l}^{\bf h}u\otimes u_{\bf i}\in S$  is  of degree    $\sum_{i=1}^r {h_i}$.
 In order to describe a term of $ u\otimes u_{\mathbf l}x_1^{m_1}x_2^{m_2}\cdots x_r^{m_r}w$  with the highest degree,   we define a labeled permutation diagram
 $ D_{w}$ with respect to $w$ as follows.

 First, we insert   $m_i$  beads at the $i$th  vertex at the top row of $D_{w}$, for all $1\le i\le r$.
Secondly, we label each vertex with some positive  integer as follows.
\begin{enumerate}
\item For any $i, 1\le i\le r$, the $i$th  vertex   at the top row of   $D_{w}$ is  labeled  with the positive integer $l_i$, where $\mathbf l:=\mathbf i_{\lambda}w_\lambda=(l_1, \ldots, l_r)$,
\item If $[i, j]$ is an edge of $D_{w}$ and $m_i=0$, then the labeling of $j$th vertex  is $l_i$.
 \item If $[i, j]$ is an edge of $D_{w}$ and  $m_i>0$, then $l_i\in \mathbf p_{m_i+1}$ and  the labeling of $j$th vertex  is $l_i-\sum_{m=1}^{m_i} n_{m_i+1-m}+\sum_{j=1}^{m_i}|\lambda^{(j)}|\in \mathbf p_1$ (see the definition of $\mathbf p_i$ above  \eqref{eijv}).  \end{enumerate}
    Since we are assuming $r\le \min\{n_1, n_2, \ldots, n_\ell\}$, the above is well-defined.  For  pairs  $(l_i, m_i)$, $1\leq i\leq r$ with $m_i>0$ (determined by the labeled permutation diagram as above), define
     \begin{equation} \label{operator0} \mathcal Y_{l_i, m_i, i}  =e_{l_i, l_i-n_{m_i}} e_{l_i-n_{m_i}, l_i-n_{m_i}-n_{m_i-1}} \cdots e_{l_i-\sum_{j=1}^{m_i-1}n_{m_i-j+1}, l_i-\sum_{j=1}^{m_i}n_{m_i-j+1}+\sum_{j=1}^{m_i-1}|\lambda^{(j)}|}.
  \end{equation}
If $m_i=0$, we define   $\mathcal Y_{l_i, m_i, i}:=1$.
Consider the ordered product
$\mathcal Y=\mathcal Y_{l_1, m_1, 1}\cdots \mathcal Y_{l_r, m_r, r}$.
Let $\mathbf i_w$ be the sequence of positive integers obtained by reading the labeling of vertices  at the bottom row
of the labeled  permutation diagram $ D_{w}$ from left to right.
By Lemma~\ref{acofx} and \cite[Remark~3.16]{RSong1}, the  $\mathcal Y u$ term of $ u\otimes u_{\mathbf l}x_1^{m_1}x_2^{m_2}\cdots x_r^{m_r} $ is $u_{\mathbf i_1}$,  where $\mathbf i_1$ is  $\mathbf i_w$ for $w=1$.
Note that  the coefficient of $u_{\mathbf i_1}d(\t)$ in $u_{\mathbf i_1}y^{\underline c}_{{\lambda'}}d(\s)$ is non-zero if and only if $\t=\s$.  So
the coefficient of $\mathcal Y u\otimes u_{\mathbf i_1}d(\t)$
in $u_{\s}$ is non-zero if and only if  $\t=\s$. This proves that
  $\{u_\t\mid \t\in \Std(\lambda')\}$ is linearly independent.

  Note that $\dim_{\mathbb C} V_{\tilde \lambda}=\dim_{\mathbb C} \Hom_{\mathcal O^{\underline n, \underline c}} (M^{\mathfrak p}(\tilde\lambda), T^{J,r})$.
   Since there is a bijection between $\Std(\lambda)$ and the set of up $\lambda$-tableaux, by standard arguments on a parabolic Verma flag of $ T^{J,r}$, we have  $ \dim_{\mathbb C} \Hom_{\mathcal O^{\underline n, \underline c}} (M^{\mathfrak p}(\tilde\lambda), T^{J,r})= \# \Std(\lambda')$,  proving   (b).
\end{proof}
\begin{example} Assume $(n_1, n_2,n_3, \ell, r )=(11, 12,13, 3, 11)$.  Suppose $\lambda=((2), (2,1),(4,2))$,
 $d(\t)=s_2 s_4s_6s_8s_9$ and $\mathbf l=(24,25,24,25,24,24,12,13,12,1,2)$. Then $\mathbf m=(2^6,1^3)$. The following labeled  diagram is $ D_{d(\t)}$. In this case,
\begin{enumerate}\item $\mathbf i_{d(\t)}=(6,6,7,6,7,3,6,3,1,4,2)$,
\item $\mathcal Y_{24, 2, 1}=\mathcal Y_{24, 2, 3}=\mathcal Y_{24, 2, 5}=\mathcal Y_{24, 2, 6}=e_{24,12}e_{12, 6}$, $\mathcal Y_{25, 2, 2}=\mathcal Y_{25, 2, 4}=e_{25,13}e_{13, 7}$,
$\mathcal Y_{12, 1, 7}=\mathcal Y_{12, 1, 9}=e_{12, 3}$, $\mathcal Y_{13, 1, 8}=e_{13, 4}$.\end{enumerate}
\end{example}

% This is a LaTeX picture output by TeXCAD.
% File name: [1.pic].
% Version of TeXCAD: 4.3
% Reference / build: 30-Jun-2012 (rev. 105)
% For new versions, check: http://texcad.sf.net/
% Options on the following lines.
%\grade{\on}
%\emlines{\off}
%\epic{\off}
%\beziermacro{\on}
%\reduce{\on}
%\snapping{\off}
%\pvinsert{% Your \input, \def, etc. here}
%\quality{8.000}
%\graddiff{0.005}
%\snapasp{1}
%\zoom{4.0000}
\unitlength 1mm % = 2.845pt
\linethickness{0.4pt}
\ifx\plotpoint\undefined\newsavebox{\plotpoint}\fi % GNUPLOT compatibility
\begin{picture}(132.275,52.875)(20,70)
%\put(80,116.75){\line(0,-1){37.75}}
%\emline(63.25,109.75)(73,84.75)
\put(40,108){\line(0,-1){23.5}}
\multiput(46.75,108.75)(.0307370242,-.0865051903){275}{\line(0,-1){.0865051903}}
\multiput(54.75,108.75)(-.0336734694,-.0979591837){245}{\line(0,-1){.0979591837}}
\multiput(63.25,108.75)(.0307370242,-.0865051903){275}{\line(0,-1){.0865051903}}
\multiput(71.25,108.75)(-.0336734694,-.0979591837){245}{\line(0,-1){.0979591837}}
\multiput(88,108.75)(.0307370242,-.0765051903){315}{\line(0,-1){.0865051903}}
\multiput(97,108.75)(-.0336734694,-.0879591837){265}{\line(0,-1){.0979591837}}

\multiput(105,108.25)(.0390090199,-.0556906585){422}{\line(1,0){.0428790199}}
\multiput(115.25,108.25)(-.0330038262,-.0761275416){291}{\line(0,-1){.0411275416}}
\multiput(122.25,108.25)(-.0307078652,-.0821985019){274}{\line(0,-1){.0411985019}}
\put(131,108){\line(0,-1){23.5}}

\put(40,108){\circle*{0.8}}
\put(46.75,108.2){\circle*{0.8}}
\put(54.75,108.2){\circle*{0.8}}
\put(63.5,108.5){\circle*{0.8}}
\put(71.25,108.5){\circle*{0.8}}
\put(87.95,108.5){\circle*{0.8}}
\put(96.75,108.5){\circle*{0.8}}
\put(105.5,108.2){\circle*{0.8}}
\put(115.5,108.2){\circle*{0.8}}
\put(122.25,108.2){\circle*{0.8}}
\put(131,108.2){\circle*{0.8}}

\put(40,85){\circle*{0.8}}
\put(46.75,85){\circle*{0.8}}
\put(54.75,85){\circle*{0.8}}
\put(63.5,85){\circle*{0.8}}
\put(71.55,85){\circle*{0.8}}
\put(87.95,85){\circle*{0.8}}
\put(97.5,85){\circle*{0.8}}
\put(105.5,85){\circle*{0.8}}
\put(114.25,85){\circle*{0.8}}
\put(121.25,85){\circle*{0.8}}
\put(131,85){\circle*{0.8}}

\put(40,105){\circle*{2}}
\put(40,107){\circle*{2}}
\put(47.25,107){\circle*{2}}
\put(48.25,105){\circle*{2}}
\put(54.25,107){\circle*{2}}
\put(53.85,105){\circle*{2}}
\put(63.75,107){\circle*{2}}
\put(64.15,105){\circle*{2}}

\put(70.75,107){\circle*{2}}
\put(70.25,105){\circle*{2}}
\put(88.15,107){\circle*{2}}
\put(88.95,105){\circle*{2}}

\put(96.25,107){\circle*{2}}
\put(106.25,107){\circle*{2}}
\put(115.25,107){\circle*{2}}

\put(23.75,95){$D_{d(\t)}=$}
\put(38.75,110){24}
\put(46.05,110){25}
\put(54,110){24}
\put(61.75,110){25}
\put(69.5,110){24}
\put(86,110){24}
\put(95.25,110){12}
\put(103.5,110){13}
\put(114,110){12}
\put(120.75,110){1}
\put(129.75,110){2}

\put(38.25,81){6}
\put(46.05,81){6}
\put(54,81){7}
\put(62.75,81){6}
\put(70.5,81){7}
\put(86,81){3}
\put(95.25,81){6}
\put(103.5,81){3}
\put(113.25,81){1}
\put(120.75,81){4}
\put(129.5,81){2}
\put(133.5,81){.}

\put(80.5,75){Figure 2}

\end{picture}

\begin{Theorem}\label{kernel} Suppose $r\leq \min\{n_1, \ldots, n_\ell\}$ and  $\H_{\ell, r}=\H_r^{\rm aff}/\langle f(x_1)\rangle$, where $f(x_1)$ is $f_\ell(x_1)$   in Lemma~\ref{polyofx}(b). Then there is an algebra isomorphism
$\phi: \H_{\ell, r}\cong \End_{\mathcal O^{\underline n, \underline c}} (T^{J, r})^{op}$ sending the generators  $x_1$ and $s_1, \ldots, s_{r-1}$ of $\H_{\ell, r}$ to the operators
 $x_1$ and $s_1, \ldots, s_{r-1}$ in \eqref{operator--1}, respectively.
\end{Theorem}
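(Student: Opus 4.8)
The plan is to check that the assignment $x_1\mapsto x_1$, $s_i\mapsto s_i$ of \eqref{operator--1} defines an algebra homomorphism $\phi:\H_{\ell,r}\to\End_{\mathcal O^{\underline n,\underline c}}(T^{J,r})^{\mathrm{op}}$, and then to prove it is bijective by a dimension count together with the injectivity furnished by the highest weight vectors of Theorem~\ref{hiofcyche}. First I would verify that $s_1,\dots,s_{r-1},x_1$ as defined in \eqref{operator--1} do satisfy the defining relations of $\H_r^{\rm aff}$ recalled in the introduction: the braid and commutation relations for the $s_i$ hold because $\Omega$ is the image of a coweight-type element and $\phi_{i,i+1}(\Omega)$, $\phi_{j,j+1}(\Omega)$ commute for $|i-j|>1$, while $\phi_{i,i+1}(\Omega)$, $\phi_{i+1,i+2}(\Omega)$ satisfy the braid relation by the standard computation with the split Casimir $\Omega$ (this is essentially \cite[Section~3]{BLW}); the mixed relations $x_is_i-s_ix_{i+1}=-1$, $s_ix_i-x_{i+1}s_i=-1$ follow, as usual, from $[\Omega,x\otimes 1+1\otimes x]=0$ for $x\in\mfg$ together with the precise normalization of $\Omega$. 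Then I would invoke Lemma~\ref{polyofx}(b): since $L(\lambda_J)\otimes U$ (hence every tensor factor slot) is annihilated by $f_\ell(x_1)$, the operator $x_1$ on $T^{J,r}$ satisfies $f_\ell(x_1)=0$, so the homomorphism $\H_r^{\rm aff}\to\End(T^{J,r})^{\mathrm{op}}$ factors through $\H_{\ell,r}=\H_r^{\rm aff}/\langle f(x_1)\rangle$. This gives $\phi$.

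Next, surjectivity. Because $L(\lambda_J)$ is tilting and $U$ is self-dual-ish in the relevant sense, $T^{J,r}$ is a tilting module with a parabolic Verma flag, and the general double-centralizer philosophy of \cite{BLW} identifies $\End_{\mathcal O^{\underline n,\underline c}}(T^{J,r})$ with the appropriate cyclotomic object; concretely I would argue that the endomorphism algebra is spanned by the $\H_{\ell,r}$-action together with the fact that $U^{\otimes r}$ is generated under $\mfg$-morphisms and the $\mathfrak S_r$-action in the way familiar from Schur--Weyl duality — so $\phi$ is surjective. Alternatively, surjectivity can be deduced at the same time as injectivity via equality of dimensions, which is the route I would actually take: I claim $\dim_{\mathbb C}\End_{\mathcal O^{\underline n,\underline c}}(T^{J,r})=\dim_{\mathbb C}\H_{\ell,r}=\ell^r r!$.

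For the dimension count, I would use the parabolic Verma flag of $T^{J,r}$ together with $\mathrm{Ext}$-vanishing between parabolic Verma modules and tilting modules in $\mathcal O^{\underline n,\underline c}$ to get
\[
\dim_{\mathbb C}\End_{\mathcal O^{\underline n,\underline c}}(T^{J,r})
=\sum_{\mu}\bigl(T^{J,r}:M^{\mathfrak p}(\mu)\bigr)\cdot\dim_{\mathbb C}\Hom_{\mathcal O^{\underline n,\underline c}}\bigl(M^{\mathfrak p}(\mu),T^{J,r}\bigr).
\]
By the iterated tensor-identity/translation argument (the flag in Lemma~\ref{polyofx}(a) applied $r$ times, using $r\le\min\{n_i\}$ so that no multiplicities collapse), the multiplicities $\bigl(T^{J,r}:M^{\mathfrak p}(\mu)\bigr)$ are counted by paths, i.e.\ by up-$\lambda$-tableaux, and $\Hom_{\mathcal O^{\underline n,\underline c}}(M^{\mathfrak p}(\tilde\lambda),T^{J,r})\cong V_{\tilde\lambda}$, whose dimension is $\#\Std(\lambda')=\#\Std(\lambda)$ by Theorem~\ref{hiofcyche}. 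Summing over $\lambda\in\Lambda_{\ell,r}$ gives $\sum_\lambda(\#\Std(\lambda))^2=\dim_{\mathbb C}\H_{\ell,r}$ by the cellular basis Theorem~\ref{H cellular}. Finally, injectivity of $\phi$: if $h\in\H_{\ell,r}$ acts as $0$ on $T^{J,r}$, then in particular it annihilates every $u_\t$, and since the $u_\t$ for $\t\in\Std(\lambda')$, $\lambda\in\Lambda_{\ell,r}$, are precisely the highest weight vectors detecting the cell filtration (Theorem~\ref{hiofcyche} together with Lemma~\ref{subcell} identifying $S^{\underline c}_\lambda=z^{\underline c}_\lambda\H_{\ell,r}$), a standard argument shows $h$ acts as $0$ on every cell module $S^{\underline c}(\lambda)$ for all $\lambda$ and all $\underline c$, hence $h=0$. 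Combined with the dimension equality, $\phi$ is an isomorphism.

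The main obstacle I expect is the dimension bookkeeping: making rigorous that the iterated parabolic Verma flag of $T^{J,r}=L(\lambda_J)\otimes U^{\otimes r}$ has multiplicities counted exactly by $\Std(\lambda)$'s — this requires the hypothesis $r\le\min\{n_1,\dots,n_\ell\}$ to guarantee that every composition appearing has length $\le n_i$ in the $i$th block (so $\Lambda^{\underline n}_{\ell,r}=\Lambda_{\ell,r}$) and that no two summands $M^{\mathfrak p}(\mu)$ in successive steps merge — plus the $\mathrm{Ext}^1$-vanishing $\mathrm{Ext}^1_{\mathcal O^{\underline n,\underline c}}(M^{\mathfrak p}(\mu),M^{\mathfrak p}(\nu))=0$ needed to turn the flag multiplicities into $\Hom$-dimensions. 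Everything else (the relation check, the factoring through $\langle f(x_1)\rangle$, and extracting injectivity from Theorem~\ref{hiofcyche}) is comparatively routine given the results already established.
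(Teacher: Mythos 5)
Your architecture is the same as the paper's: check $\phi$ is a well-defined homomorphism (relations via the split Casimir following \cite[Section~3]{BLW}, factoring through $\langle f(x_1)\rangle$ via Lemma~\ref{polyofx}(b)), establish $\dim_{\mathbb C}\End_{\mathcal O^{\underline n,\underline c}}(T^{J,r})=\sum_\lambda(\#\Std(\lambda'))^2=\dim_{\mathbb C}\H_{\ell,r}$ by the parabolic Verma flag and Theorem~\ref{hiofcyche}(b), and then produce an inverse-direction inequality. That part is sound and matches what the paper does implicitly.

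The genuine gap is in your injectivity step. You argue: if $\phi(h)=0$ then $h$ kills every $u_\t$, hence $h$ kills every $S_\lambda^{\underline c}\cong\tilde S^{\underline c}(\lambda')$, ``hence $h=0$.'' That last implication is false for cellular algebras in general. A common right-annihilator of all cell modules is a nil ideal but need not vanish: already for $A=\mathbb C[\epsilon]/(\epsilon^2)$ with its obvious two-step cell structure, $\epsilon\ne0$ acts as $0$ on both one-dimensional cell modules. More structurally, if $h$ kills all subquotients of the cell filtration of the right regular module, one only gets $h^n=0$. Nothing you cite (Lemma~\ref{subcell}, Theorem~\ref{hiofcyche}) upgrades ``nilpotent'' to ``zero,'' and in fact $\ker\phi$ being a nil two-sided ideal is exactly the kind of statement that needs the specific structure of the algebra to rule out. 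Your parenthetical ``for all $\underline c$'' also signals trouble: a given $T^{J,r}$ only sees one $\underline c$, so you cannot vary $\underline c$ inside this argument.

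The paper avoids this by proving a sharper statement directly: it shows that the image under $\phi$ of the entire cellular basis $S=\{n^{\underline c}_{\s,\t}\}$ is linearly independent in $\End_{\mathcal O^{\underline n,\underline c}}(T^{J,r})^{\mathrm{op}}$, which gives injectivity (and, with the dimension count, surjectivity) in one shot. That linear-independence proof is the hard core of the theorem: it chooses a maximal $\lambda$ and a maximal $\s_0$ among the nonzero coefficients, applies the putative relation to the carefully chosen vector $u\otimes u_\lambda d(\s_0')$, and uses \eqref{killlemma12}, \eqref{kkk123}, \eqref{key111}, and the double-coset decomposition $w=w_1w_2w_3$ to peel away all terms except those indexed by $\s_0$; the remaining relation is among the highest weight vectors $u_\t$, whose linear independence was established in Theorem~\ref{hiofcyche} by the labeled-permutation-diagram leading-term argument. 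This is precisely the content you label ``a standard argument.'' It is not; it is the bulk of the proof, and your proposal as written does not supply it.
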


\begin{proof} We claim that  $\phi(S)$  is linearly independent, where
\begin{equation}\label{bigs} S=\{ n^{\underline c}_{\s,\t}\mid \s,\t\in \Std(\lambda') \text{ and }  \lambda\in \Lambda_{\ell,r} \}.\end{equation}
Since   $ \sharp S={\rm dim}_{\mathbb C}\text{End}_{\mathcal O^{\underline n, \underline c}}(T^{J, r})$, we have $\dim_{\mathbb C} \phi(\H_{\ell, r})\ge {\rm dim}_{\mathbb C}\text{End}_{\mathcal O^{\underline n, \underline c}}(T^{J, .r})$, forcing
$\phi$ to be surjective and hence  $\phi$ is an isomorphism.

 Abusing  notation, we denote  $\phi(h)$ by $h$ for any $h\in \H_{\ell, r}$.
 By Theorem~\ref{hiofcyche}, $u_{\t^\lambda}$ is a highest weight vector, forcing $u\otimes u_{\lambda}w_{[\lambda]} \tilde\pi_{[\lambda']}\neq 0$. Suppose $w\in \Sym_r$.
 Then there are $w_1\in \Sym_{[\lambda]}, w_3\in \Sym_{[\lambda']}$ and $w_2\in
\mathfrak D_{[\lambda],[\lambda']}$, such that \begin{equation} \label{pres} w=w_1 w_2 w_3\end{equation}
where $\mathfrak D_{[\lambda],[\lambda']}$ is the distinguished
double coset representatives in $\mathfrak S_{[\lambda]}\backslash\mathfrak S_r/\mathfrak S_{[\lambda']}$.
 %Write  such that  Then $\mathfrak D_{[\lambda],[\lambda']}=\mathfrak D_{[\lambda]}\cap \mathfrak D_{[\lambda']}^{-1}$ where
 %$\mathfrak D_{[\lambda]}$ is  the set of distinguished  right  coset representatives in $\mathfrak S_{[\lambda]}\backslash\mathfrak S_r$.
 We have  $$u\otimes u_\lambda w_1=u\otimes u_{\mathbf j}$$ up to a sign  where $\mathbf j$ is obtained from $\mathbf i_\lambda$ by permuting some entries in each $\mathbf i_{\lambda^{(i)}}$.  Write $u\otimes u_{\mathbf j}w_2=u\otimes u_{\mathbf l}$ up to a sign  and $[\lambda']=[b_0, b_1, \ldots, b_\ell]$.

 If $l(w)\le l(w_{[\lambda]})$ and $w\neq  w_{[\lambda]}$, then
 $w_2\neq w_{[\lambda]}$. So
there is an $ i\in  \{b_{k-1}+1, \ldots, b_{k}\}$ such that
$ l_i\in \mathbf p_{\ell-j+1}$, for some  $j>k$.
In particular,  $(1, i) \in \Sym_{b_{k}}$. Write   $u\otimes u_{\mathbf l} (1, i) =u\otimes u_{\mathbf k}$ up to a sign.  Recall the definition $\pi_{a,b}$ in \eqref{pi}. Since $w_3\in \mathfrak S_{[\lambda']}$, $w_3 \tilde \pi_{[\lambda']}=\tilde \pi_{[\lambda']} w_3$ and hence
$$
u\otimes u_{\mathbf l} w_3 \tilde\pi_{[\lambda']}=u\otimes u_{\mathbf k}\pi_{b_{k}, \ell-k-1} \pi_{b_{k+1}, \ell-k-2}\cdots \pi_{b_{\ell-1}, 0} (1, i)   \pi_{b_{1}, \ell-2
}\cdots \pi_{b_{k-1}, \ell-k} w_3.
$$
 Note that $k_1=l_i$, where $\mathbf k=(k_1, \ldots, k_r)$. So,  $u\otimes u_{k_1}\in M_{\ell-j}\subset M_{\ell-k}$, where $M_{\ell-k}$ is given in \eqref{filofvm}. By  Lemma~\ref{polyofx}(b),  $u\otimes u_{\lambda}w \tilde\pi_{[\lambda']}=0$. So, we have
 \begin{equation}\label{killlemma12} u \otimes u_{\lambda}w \tilde\pi_{[\lambda']}=0, \text{  if $w\neq w_{[\lambda]}$ and $l(w)\le l(w_{[\lambda]})$.}\end{equation}

 By Theorem~\ref{hiofcyche}, $u_{\t}= u\otimes u_{\lambda} w_\lambda \tilde \pi_{[\lambda']} y_\lambda^{\underline c}$ is a highest weight vector. In particular, it is not $ 0$.
Suppose $w\in \mathfrak S_r$ such that $l(w)\le l(w_\lambda)$ and $w\neq w_\lambda$.
 Write $w=w_1w_2w_3$ as in \eqref{pres}. If $w_2\neq  w_{[\lambda]}$, by \eqref{killlemma12}, we have  \begin{equation} \label{kkk123}
 { u\otimes u_{\lambda} w \tilde \pi_{[\lambda']} y_\lambda^{\underline c} = 0}\end{equation}
 If $w_2=w_{[\lambda]}$, we  write  $w=w'w_{[\lambda]}$ such that   $w'\in \mathfrak S _{[\lambda]}=\mathfrak S_{|\lambda^{(1)}|}\times \ldots\times \mathfrak S_{|\lambda^{(\ell)}|}$. So,  $w'=h_1  \cdots h_\ell$, where   $h_i\in \mathfrak S_{|\lambda^{(i)}|}$, for all $1\le i\le \ell$.
  We have $$ u\otimes u_{\lambda} w  \tilde \pi_{[\lambda']} y^{\underline c}_{\lambda'} =u\otimes u_{\lambda} h_1h_2\cdots h_\ell y^{ c_1}_{\mu^{(1)}} \cdots   y^{ c_\ell}_{\mu^{(\ell)}} w_{[\lambda]}\tilde{\pi}_{[\lambda']}, $$
 where $\mu^{(i)}$ is the dual partition of  $\lambda^{(i)}$,  and   $x_{\mu^{(i)}}^{c_i}$ and $y_{\mu^{(i)}}^{c_i}$'s are defined in \eqref{xci}.
Note that $x^{c_i}_{\lambda^{(i)}}$ acts on $ u\otimes u_{\lambda}$ as a non-zero scalar.
By \cite[4.1]{DJ} (for $q=1$), $$ u\otimes u_{\lambda} h_1h_2\cdots h_\ell y^{ c_1}_{\mu^{(1)}} \cdots   y^{ c_\ell}_{\mu^{(\ell)}} w_{[\lambda]}\tilde{\pi}_{[\lambda']}= 0$$
 unless $h_i=w_{(i)}$ for all $1\leq i\leq \ell$, where $w_{(i)}$'s are given in \eqref{wlaex}. In the later case,  $w=w_\lambda$. So  \eqref{kkk123} is available
 for all $w\in \Sym_r$ such that $w\neq w_\lambda$ and $l(w)\le l(w_\lambda)$.

%  Next,  we want to  prove
 % Obviously, when $w=w_\lambda$, \eqref{kkk123} follows from
 %. It remains to deal with the case $w\neq w_\lambda$.

Suppose $\sum a_{\s,\t}  n^{\underline c}_{\s, \t}=0$ in  $\text{End}_{\mathcal O^{\underline n, \underline c}}(T^{J, r})^{op}$, where $a_{\s,\t}\in\mathbb C$ and the summation is over  $S$ in \eqref{bigs}.
 We claim  every $a_{\s,\t}$ is zero.
 Otherwise, define \begin{itemize}\item $S_1=\{\nu\in\Lambda_{\ell, r}\mid a_{\s,\t}\neq0 \text{ for some }\s,\t\in \Std(\nu')\}$,
   \item $S_2=\{\nu\in S_1\mid [\nu']=\mathbf a\}$, where  $\mathbf a=\min\{[\nu']\mid \nu\in S_1\}$ with respect to the lexicographic order.  \end{itemize}
 Let $ \lambda$ be a  maximal element in $S_2$ with respect to the dominant order $\unrhd$. Take a  $\s_0\in \Std(\lambda')$ such that $l(d(\s_0))$ is maximal among all $\s$ satisfying  $ a_{\s,\t}\neq0$.
  For any  $ \nu\in S_1\setminus S_2$, write $[\nu']=[b_0,b_1,b_2,\ldots,b_\ell]$ and $[\lambda']=[a_0,a_1,a_2,\ldots,a_\ell]$.
  Then  there is an $i\in \{1, 2, \ldots, \ell-1\}$ such that $a_j=b_j$ for $j<i$ and $a_i<b_i$.
  We can find  a $j_1\in \mathbf p_{j}$, $j\leq \ell-i$ such that $u_{j_1}$ is at $h$th position of  $u\otimes u_{\lambda} d(\s_0') d(\s)^{-1}$
  and $h\leq a_i+1$.  So
$$u\otimes u_{\lambda} d(\s_0' ) d(\s)^{-1}\tilde\pi_{[\nu']}=u\otimes u_{\bf j}\pi_{b_i, \ell-i-1}\cdots
\pi_{b_{\ell-1}, 0}(h,1) \pi_{b_1, \ell-2}\cdots \pi_{b_{i-1}, \ell-i }
$$
where $u_{\mathbf j}= u_{\mathbf i_\lambda}d(\s_0') d(\s)^{-1} (h,1)$ up to a sign.
 Since $(x_1-\omega_1)\cdots (x_1-\omega_{\ell-i})$ is a factor of $\pi_{b_i, \ell-i-1}\cdots
\pi_{b_{\ell-1}, 0}$,  by Lemma~\ref{polyofx}(b),
$u\otimes u_{\bf j} \pi_{b_i, \ell-i-1}\cdots \pi_{b_{\ell-1}, 0}   =0$. So,
\begin{equation}\label{key111} u\otimes u_{\lambda} d(\s_0') d(\s)^{-1}\tilde\pi_{[\nu']}=0, \end{equation}
if $[\lambda']$ is strictly less than $[\nu']$ with respect to the lexicographic order.
Acting $\sum a_{\s,\t}  n^{\underline c}_{\s,\t}$ on $u\otimes u_\lambda d(\s_0')$ and using \eqref{key111}  yields
 \begin{equation}\label{actionofnstoc} \sum_{\s,\t\in\Std(\nu'), \nu\in S_2} a_{\s,\t}   u\otimes u_\lambda d(\s_0') d(\s)^{-1}n^{\underline c}_{\nu'} d(\t)=0.\end{equation}
 Since $\nu\in S_2$, $[\nu']=[\lambda']$. By \eqref{killlemma12},   $u\otimes u_\lambda d(\s_0') d(\s)^{-1}n^{\underline c}_{\nu'}\neq0$ only if $d(\s_0') d(\s)^{-1}=w_1w_{[\lambda]} w_3$, where $(w_1, w_3)\in \Sym_{[\lambda]}\times \Sym_{[\lambda']}$.
 In this case, by Lemma~\ref{liehwv}(a), $u_\lambda w_1 w_{[\lambda]} w_3 y_{\nu'}^{\underline c}\neq 0$ only if $\lambda^{(i)}\unlhd \nu^{(i)}$ for all $1\leq i\leq \ell$.
 Since $ \lambda$ is    maximal in $S_2$, \eqref{actionofnstoc} can be written as
\begin{equation}\label{actionofnstoc1} \sum_{\t\in\Std(\lambda')} a_{\s,\t}   u\otimes u_\lambda d(\s_0') d(\s)^{-1}n^{\underline c}_{\lambda'} d(\t)=0.\end{equation}
Since $l(d(\s_0))$ is maximal,  by \eqref{wlae}, $l(d(\s_0')d(\s)^{-1})\le l(w_\lambda)$ and $d(\s_0')d(\s)^{-1}= w_\lambda$ if and only if $\s=\s_0$.  By \eqref{kkk123} and  \eqref{actionofnstoc1},
$$\sum_{\t\in \Std(\lambda')} a_{\s_0, \t} u_\t= \sum_{\t\in\Std(\lambda')} a_{\s_0,\t}   u\otimes u_\lambda w_\lambda n^{\underline c}_{\lambda'} d(\t)=0.$$
By Theorem~\ref{hiofcyche}, $u_\t$'s are linear independent. So $a_{\s_0, \t}=0$ for all $\t\in \Std(\lambda')$, a contradiction. \end{proof}

\begin{lemma}\label{moduleisomofhe}Suppose $r\leq \min\{n_1, \ldots, n_\ell\}$.
For any $\lambda\in\Lambda_{\ell,r}$, ${\rm Hom}_{\mathcal O^{\underline n, \underline c}} (M^{\mathfrak p}(\tilde\lambda),T^{J,  r})\cong \tilde {S}^{\underline c}(\lambda')$ as right $\mathscr H_{\ell,r}$-modules, where $\tilde \lambda$ is defined in Definition~\ref{defofhigest}.
\end{lemma}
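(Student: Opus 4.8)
The plan is to first identify the Hom space with a concrete subspace of $T^{J,r}$, and then match that subspace with the realization of $\tilde S^{\underline c}(\lambda')$ furnished by Lemma~\ref{subcell}. Let $v^+$ be the highest weight vector of $M^{\mathfrak p}(\tilde\lambda)$ and set $V_{\tilde\lambda}=\{v\in T^{J,r}\mid v\text{ has weight }\tilde\lambda\text{ and }e_{j,j+1}v=0\text{ for }1\le j\le m+n-1\}$. Since $M^{\mathfrak p}(\tilde\lambda)$ is generated over $\U(\mfg)$ by $v^+$ and $v^+$ is killed by $\mathfrak b$, the map $f\mapsto f(v^+)$ is an injection $\Hom_{\mathcal O^{\underline n,\underline c}}(M^{\mathfrak p}(\tilde\lambda),T^{J,r})\to V_{\tilde\lambda}$; it is also surjective because $\tilde\lambda\in\Lambda_{\mathfrak p}$, so that for $v\in V_{\tilde\lambda}$ the $\mathfrak p$-submodule of $T^{J,r}$ generated by $v$ is the finite dimensional $L_{\tilde\lambda}$ (the nilradical of $\mathfrak p$ acting as zero), whence $v$ is the image of $v^+$ under a morphism out of $M^{\mathfrak p}(\tilde\lambda)=\U(\mfg)\otimes_{\U(\mathfrak p)}L_{\tilde\lambda}$ (Frobenius reciprocity). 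Because every element of $\End_{\mathcal O^{\underline n,\underline c}}(T^{J,r})$ preserves weight spaces and commutes with the $e_{j,j+1}$, the subspace $V_{\tilde\lambda}$ is a right $\H_{\ell,r}$-submodule of $T^{J,r}$ for the action transported through Theorem~\ref{kernel}, and $f\mapsto f(v^+)$ is $\H_{\ell,r}$-equivariant. So it suffices to show $V_{\tilde\lambda}\cong\tilde S^{\underline c}(\lambda')$ as right $\H_{\ell,r}$-modules.

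By Theorem~\ref{hiofcyche}, $\{u_{\mathfrak t}\mid\mathfrak t\in\Std(\lambda')\}$ is a $\mathbb C$-basis of $V_{\tilde\lambda}$, with $u_{\mathfrak t}=u_{\mathfrak t^{\lambda'}}\cdot d(\mathfrak t)$ and $u_{\mathfrak t^{\lambda'}}=(u\otimes u_\lambda)\cdot w_\lambda n^{\underline c}_{\lambda'}$; in particular $\dim_{\mathbb C}V_{\tilde\lambda}=\#\Std(\lambda')=\dim_{\mathbb C}\tilde S^{\underline c}(\lambda')$ by Lemma~\ref{subcell}. Consider the right $\H_{\ell,r}$-module homomorphism $\Phi\colon\H_{\ell,r}\to T^{J,r}$, $\Phi(h)=(u\otimes u_\lambda)\cdot w_\lambda h$. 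Its restriction to the right ideal $n^{\underline c}_{\lambda'}\H_{\ell,r}$ sends $n^{\underline c}_{\lambda'}h$ to $u_{\mathfrak t^{\lambda'}}\cdot h$; since the image contains every $u_{\mathfrak t}$ and lies inside the submodule $V_{\tilde\lambda}$, this restriction is a surjection $n^{\underline c}_{\lambda'}\H_{\ell,r}\twoheadrightarrow V_{\tilde\lambda}$. By the cellular formalism, $\tilde S^{\underline c}(\lambda')$ is the quotient of $n^{\underline c}_{\lambda'}\H_{\ell,r}$ by $n^{\underline c}_{\lambda'}\H_{\ell,r}\cap\tilde{\H}^{\gdom\lambda'}_{\ell,r}$, with $n^{\underline c}_{\lambda'}d(\mathfrak t)$ mapping to $n^{\underline c}_{\mathfrak t}$. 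Hence, as soon as we know that $(u\otimes u_\lambda)\cdot w_\lambda b=0$ for every $b\in\tilde{\H}^{\gdom\lambda'}_{\ell,r}$, the map $\Phi$ descends to a surjection $\tilde S^{\underline c}(\lambda')\twoheadrightarrow V_{\tilde\lambda}$, $n^{\underline c}_{\mathfrak t}\mapsto u_{\mathfrak t}$, and equality of dimensions makes it an isomorphism, finishing the proof.

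The whole argument therefore comes down to the vanishing $(u\otimes u_\lambda)\cdot w_\lambda n^{\underline c}_{\mathfrak u,\mathfrak v}=0$ whenever $\mu\gdom\lambda'$ and $\mathfrak u,\mathfrak v\in\Std(\mu)$, and this is the part where the real work lies. Writing $n^{\underline c}_{\mathfrak u,\mathfrak v}=d(\mathfrak u)^{-1}\tilde\pi_{[\mu]}y^{\underline c}_\mu d(\mathfrak v)$, dropping the invertible $d(\mathfrak v)$, and absorbing $w_\lambda d(\mathfrak u)^{-1}$ into a single $w\in\Sym_r$, one has to show $(u\otimes u_\lambda)\cdot w\tilde\pi_{[\mu]}y^{\underline c}_\mu=0$ — a statement of exactly the kind proved in Theorem~\ref{kernel} (compare \eqref{killlemma12}, \eqref{kkk123} and \eqref{key111}) — handled by the same two-case analysis. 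Since $\mu\gdom\lambda'$ forces $[\mu]\ge[\lambda']$ componentwise, either $[\mu]\neq[\lambda']$, in which case some index of $u\otimes u_\lambda w$ sits far enough to the left that a factor $(x_1-\omega_1)\cdots(x_1-\omega_i)$ occurring inside $\tilde\pi_{[\mu]}$ kills the vector by Lemma~\ref{polyofx}(b), exactly as in the computation behind \eqref{key111}; or $[\mu]=[\lambda']$, so $\mu^{(s)}\gedom(\lambda^{(\ell-s+1)})'$ for every $s$ with at least one strict inequality, and then — after reducing by the double-coset factorisation \eqref{pres} as in the proof of Theorem~\ref{kernel} — the vector is annihilated because one is left with a block-wise product in which a factor $x_{\alpha}\,\mathbb C\Sym_r\,y_{\beta'}$ occurs (with the trivial/sign replacements dictated by $\underline c$) whose partitions $\alpha,\beta$ fail the relation $\alpha\unlhd\beta$ required by Lemma~\ref{liehwv}(a). (Equivalently, one may deduce this vanishing from Lemma~\ref{subcell}, which already yields $m^{\underline c}_\lambda w_\lambda\cdot\tilde{\H}^{\gdom\lambda'}_{\ell,r}=0$, together with the fact that $\pi_{[\lambda]}x^{\underline c}_\lambda$ acts on $u\otimes u_\lambda$ by a nonzero scalar, namely $\prod_i|\Sym_{\lambda^{(i)}}|$ times a product of nonzero eigenvalue differences; but verifying that $\pi_{[\lambda]}$ acts by a scalar on $u\otimes u_\lambda$ is of the same order of difficulty.) This Lie-theoretic/combinatorial vanishing is the main obstacle; everything else is formal.
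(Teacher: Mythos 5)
Your proposal is correct and follows essentially the same strategy as the paper: identify $\Hom_{\mathcal O^{\underline n,\underline c}}(M^{\mathfrak p}(\tilde\lambda),T^{J,r})$ with the highest weight space $V_{\tilde\lambda}$ using Theorem~\ref{hiofcyche}, match it with the realization of $\tilde S^{\underline c}(\lambda')$ furnished by Lemma~\ref{subcell}, and reduce everything to the vanishing $(u\otimes u_\lambda)w_\lambda n^{\underline c}_{\u,\v}=0$ for $\mu\gdom\lambda'$, which (exactly as in the paper) splits into the cases $[\mu]\ne[\lambda']$ handled via Lemma~\ref{polyofx}(b) as in \eqref{key111}, and $[\mu]=[\lambda']$ handled via the double-coset factorisation \eqref{pres} and Lemma~\ref{liehwv}(a). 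Your packaging as a right-module surjection $n^{\underline c}_{\lambda'}\H_{\ell,r}\twoheadrightarrow V_{\tilde\lambda}$ that descends to the cell-module quotient, finished by a dimension count, is a modest repackaging of the paper's route of defining $\phi\colon u_\t\mapsto z^{\underline c}_\lambda d(\t)$ and verifying $\H_{\ell,r}$-equivariance directly (which in the paper additionally invokes \eqref{pilabnu}).
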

\begin{proof}
For any $\t\in\Std(\lambda')$, by the universal property of parabolic Verma supermodules, we define   $f_{\t}\in {\rm Hom}_{\mathcal O^{\underline n, \underline c}}(M^{\mathfrak p}(\tilde\lambda),T^{J,  r})$ such that  $f_\t(m_{\tilde\lambda})=u_\t$, where $m_{\tilde \lambda}$ is the  highest weight vector of $M^{\mathfrak p}(\tilde \lambda)$ which is unique up to a non-zero multiple.
By Theorem~\ref{hiofcyche}, $ \{f_\t\mid  \t\in \Std(\lambda')\}$ is a basis of $\Hom_{\mathcal O^{\underline n, \underline c}}(M^{\mathfrak p}(\tilde\lambda),T^{J,  r})$.
Let $\phi: V_{\tilde\lambda}\rightarrow S^{\underline c}_\lambda$ be the linear isomorphism sending $u_\t$ to $ z^{\underline c}_\lambda d(\t)$, where $S^{\underline c}_\lambda=z^{\underline c}_\lambda\mathscr H_{\ell, r}$ (see \eqref{subcell}) and $z^{\underline c}_\lambda $ is defined in Lemma~\ref{tauzlambda}. We claim that   $ \phi$ is an  $\mathscr H_{\ell,r}$-homomorphism. Suppose $h\in\mathscr H_{\ell,r}$.  By Corollary~\ref{Hc cellular}, we have
\begin{equation}\label{yhaction}n^{\underline c}_{\lambda'}d(\t)h=\sum_{\s\in\mathscr{T}^{s}(\lambda') }a_\s n^{\underline c}_{\lambda'}d(\s)+\sum_{\s_1,\s_2\in\mathscr{T}^{s}(\nu),\nu\rhd \lambda'} a_{\s_1,\s_2} d(\s_2)^{-1}n^{\underline c}_\nu d(\s_1).
\end{equation}
Write $[a_0,a_1,\ldots,a_\ell]\preceq [b_0,b_1,\ldots,b_\ell]$ if $a_i\le b_i$ for all $1\le i\le \ell$. Given $\lambda, \mu\in \Lambda_{\ell, r}$, we have
\begin{equation}\label{pilabnu} \pi_{[\lambda]} \mathscr  H_{\ell, r} \tilde \pi_{[\nu]}=0 \text{ unless  $[\lambda]\preceq [\nu']$,}
\end{equation}
the  degenerate analog  of  \cite[Proposition~1.8]{DR}. One can prove it    as in
 \cite[Proposition~1.8]{DR}.
By Lemma~\ref{liehwv}(a) and (\ref{pilabnu}), we have   $m^{\underline c}_\lambda \mathscr  H_{\ell, r} n^{\underline c}_{\nu}=0$ if $\lambda, \nu\in \Lambda_{\ell, r}$ and  $\lambda\rhd \nu'$. Note that $\lambda\rhd \nu'$ if and only if $\lambda'\lhd \nu$.  So, $$\phi(u_\t)h= \sum_{\s\in\Std(\lambda') }a_\s m^{\underline c}_\lambda w_\lambda n^{\underline c}_{\lambda'}d(\s)=\sum_{\s\in\Std(\lambda') }a_\s\phi(u_\s).$$
In order to show that $\phi$ is an $\H_{\ell, r}$-homomorphism, it suffices to verify
\begin{equation}\label{equal0} u\otimes  u_{\mathbf i_\lambda} w_\lambda d(\s_2)^{-1}n^{\underline c}_\nu=0.\end{equation}
Since we are assuming $\nu\rhd \lambda'$, we have  either $ [\nu]=[\lambda']$ or $ [\lambda'] \prec[\nu]$. In the first case, \eqref{equal0} follows from Lemma~\ref{liehwv}(a).
 Write $[\nu]=[b_0,b_1,b_2,\ldots,b_\ell]$ and $[\lambda']=[a_0,a_1,a_2,\ldots,a_\ell]$.
In the second case, there is an $i$ such that $a_j=b_j$ for $j<i$ and $a_i<b_i$. By \eqref{key111},
$$u\otimes u_{\lambda} w_\lambda d(\s_2)^{-1}\tilde\pi_{[\nu]}=0.$$
So $u_{\t}h =\sum_{\s\in\Std(\lambda') }a_\s u_\s$ and $ \phi(u_\t)h=\phi(u_\t h)$. This shows that $\phi$ is an $\H_{\ell, r}$-homomorphism, and hence   $V_{\tilde \lambda}\cong S^{\underline c}_\lambda$ as right $\mathscr H_{\ell, r}$-modules. Via it, it is routine to check ${\rm Hom}_{\mathcal O^{\underline n, \underline c}} (M^{\mathfrak p}(\tilde \lambda),T^{J,  r})\cong S^{\underline c}_\lambda$. By Lemma~\ref{subcell}, the result follows.\end{proof}

\begin{rem} It has been proved in \cite{AST} that $\End_{\U}(M)$ is always a cellular algebra in the sense of \cite{GL} if $M$ is a tilting $\U$-module and $\U$ is an enveloping algebra of a Lie algebra or a  quantum group. The proofs of their results depend on the properties of tilting modules, standard modules and  costandard modules  in a highest weight category. So, their results
can be used in our case.
Moreover, by \cite[Definition~5.1]{AST},  $\Hom_\U(\Delta(\lambda), M)$ is the corresponding left cell module of
$\End_\U(M)$-module, where $\Delta(\lambda)$ is a standard module.
 Our result gives an explicit construction of $\Hom_\U(\Delta(\lambda), M)$ in the current case.
 %It is enough for us
 %to use it to find explicit isomorphisms between simple $\H_{\ell, r}$-modules.
\end{rem}

  For any object $M$ in $\mathcal O^{\underline n, \underline c}$, {the homomorphism space}  $\text{Hom}_{\mathcal O^{\underline n, \underline c}}(T^{J, r},M)$ is naturally a left $\H_{\ell, r}$-module. Via the anti-involution $\ast$ on $\mathscr H_{\ell, r}$, it can be   considered as a right $\mathscr H_{\ell, r}$-module.
  The following result follows from  Lemma~\ref{moduleisomofhe} and \cite[Definition~5.1]{AST}. Of course, it can be  verified by arguments similar to those in the proof of  \cite[Lemma~5.11(a)]{RSong1}.

\begin{cor}\label{picell} Suppose  $r\leq \min\{n_1, \ldots, n_\ell\}$ and $\lambda\in\Lambda_{\ell,r}$. As right  $\H_{\ell, r}$-modules, $$\text{\rm Hom}_{\mathcal O^{\underline n, \underline c}}(T^{J, r},N^{\mathfrak p}(\tilde\lambda))\cong \tilde {S}^{\underline c}(\lambda')$$ where $N^{\mathfrak p}(\tilde\lambda)$ is the dual parabolic Verma supermodule with highest weight $\tilde\lambda$.
\end{cor}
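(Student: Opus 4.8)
The plan is to deduce the statement from Lemma~\ref{moduleisomofhe} by transporting it through the contravariant duality on $\mathcal O^{\underline n, \underline c}$. Recall that $\mathcal O^{\underline n, \underline c}$ carries a contravariant duality $\mathrm d$ (the super analogue of the Chevalley/BGG duality): it fixes the isomorphism class of every simple supermodule, commutes with tensoring by finite-dimensional supermodules, preserves tilting supermodules, and interchanges parabolic Verma and dual parabolic Verma supermodules, so that $\mathrm d(M^{\mathfrak p}(\mu))\cong N^{\mathfrak p}(\mu)$ for every $\mathfrak p$-dominant weight $\mu$. The first step is to record that $T^{J,r}$ is $\mathrm d$-self-dual: both $L(\lambda_J)$ (being simple) and $U$ (being finite-dimensional) are $\mathrm d$-self-dual, so $\mathrm d(T^{J,r})=\mathrm d(L(\lambda_J)\otimes U^{\otimes r})\cong L(\lambda_J)\otimes U^{\otimes r}=T^{J,r}$; fix an isomorphism $\theta\colon T^{J,r}\xrightarrow{\ \sim\ }\mathrm d(T^{J,r})$.

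Next I would combine the canonical contravariant isomorphism $\Hom_{\mathcal O^{\underline n, \underline c}}(T^{J,r},\mathrm d M)\cong\Hom_{\mathcal O^{\underline n, \underline c}}(M,\mathrm d(T^{J,r}))$ with $\theta$ to obtain a $\mathbb C$-linear isomorphism
$$\Hom_{\mathcal O^{\underline n, \underline c}}\bigl(T^{J,r},N^{\mathfrak p}(\tilde\lambda)\bigr)\ \xrightarrow{\ \sim\ }\ \Hom_{\mathcal O^{\underline n, \underline c}}\bigl(M^{\mathfrak p}(\tilde\lambda),T^{J,r}\bigr),\qquad f\longmapsto\theta^{-1}\circ\mathrm d(f),$$
which converts precomposition by $e\in\End_{\mathcal O^{\underline n, \underline c}}(T^{J,r})$ on the source into postcomposition by $\theta^{-1}\circ\mathrm d(e)\circ\theta$ on the target. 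Consequently, once the natural left $\H_{\ell,r}$-action on the source is turned into a right action via the anti-involution $*$, this map becomes an isomorphism of right $\H_{\ell,r}$-modules precisely when the anti-automorphism $e\mapsto\theta^{-1}\circ\mathrm d(e)\circ\theta$ of $\End_{\mathcal O^{\underline n, \underline c}}(T^{J,r})$ corresponds, under the isomorphism $\phi$ of Theorem~\ref{kernel}, to $*$; granting this, Lemma~\ref{moduleisomofhe} finishes the proof.

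The main obstacle is exactly this compatibility of $\mathrm d$ with $*$, which I would settle on generators. As $*$ is the unique anti-automorphism of $\H_{\ell,r}$ fixing $x_1$ and $s_1,\dots,s_{r-1}$, it suffices to verify that the operators $x_1=\phi_{0,1}(\Omega)|_{T^{J,r}}$ and $s_i=\phi_{i,i+1}(\Omega)|_{T^{J,r}}$ of \eqref{operator--1} are $\mathrm d$-self-adjoint, and this reduces to the invariance of $\Omega=\tfrac12(\Delta(C)-C\otimes 1-1\otimes C)$ under the anti-automorphism $\tau\otimes\tau$ of $\U(\mfg)^{\otimes 2}$ governing $\mathrm d$ (with $\tau$ the super Chevalley anti-involution): $C$ is central and $\tau$-fixed, $\Delta$ is compatible with $\tau$, and the signs $(-1)^{[j]}$ occurring in $\Omega=\sum_{i,j}(-1)^{[j]}e_{i,j}\OTIMES e_{j,i}$ (see \eqref{omega}) are exactly those making $(\tau\otimes\tau)(\Omega)=\Omega$ hold in the $\mathbb Z_2$-graded setting; hence $\phi_{a,b}(\Omega)$ is $\mathrm d$-self-adjoint on $T^{J,r}$ for all $0\le a<b\le r$. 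Controlling the parity signs is the only genuinely delicate point, and it is the content of \cite[Lemma~5.11(a)]{RSong1} adapted to the present parabolic, cyclotomic setting.

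Alternatively, one may avoid this bookkeeping and appeal to \cite[Definition~5.1]{AST}: as $T^{J,r}$ is tilting, $\End_{\mathcal O^{\underline n, \underline c}}(T^{J,r})$ is a cellular algebra with anti-involution induced by $\mathrm d$, its cell module labelled by $\tilde\lambda$ is realized as $\Hom_{\mathcal O^{\underline n, \underline c}}(T^{J,r},N^{\mathfrak p}(\tilde\lambda))$, and Lemma~\ref{moduleisomofhe} identifies this cell module with $\tilde{S}^{\underline c}(\lambda')$. Even along this route Theorem~\ref{kernel} must first be upgraded to an isomorphism of cellular algebras with respect to the two anti-involutions, so the comparison of $\mathrm d$ with $*$ remains the crux.
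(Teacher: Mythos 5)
Your proposal is correct and follows the same two routes the paper itself indicates: the explicit contravariant-duality argument reducing to $\mathrm d$-self-adjointness of the operators from \eqref{operator--1} (which the paper outsources to \cite[Lemma~5.11(a)]{RSong1}), and the alternative cellular-algebra argument via \cite[Definition~5.1]{AST} combined with Lemma~\ref{moduleisomofhe}. Your identification of the compatibility between the anti-involution $*$ on $\H_{\ell,r}$ and the anti-automorphism of $\End_{\mathcal O^{\underline n,\underline c}}(T^{J,r})$ induced by $\mathrm d$ as the crux is precisely the content delegated to those references.
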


\section{Tensor product categorification of $\mathfrak {sl}_J$-modules}
In the following, we  recall  some  results about tensor product categorification of $\mathfrak {sl}_J$-modules from \cite{BLW}. All the results can be found in \cite{BLW}  except Lemma~\ref{irrunderpi}.
Let $\mathfrak{sl}_I$ be the special linear Lie algebra consisting of complex trace zero matrices with rows and columns indexed by integers from $I_+$.  The exterior power $\Lambda^{n, c} V_I$ is the usual $\Lambda^n V_I$ (resp., $\Lambda^n W_I$ ) if $c=0$ (resp., $c=1$), where
 $V_I$ is the natural $\mathfrak{sl}_I$-module with  basis $\{ v_i\mid i\in I_+\}$ and   $W_I$ is its   linear dual  with  the dual basis $\{w_{i}| i\in I_+\}$.
Following \cite{BLW}, let $\Lambda_{I, n, c}$ be the set of $01$-sequences $\lambda=(\lambda_i)_{i\in I_+}$ such that $|\{i\in I_+\mid \lambda_i\neq c\}|=n$.

\begin{rem}  We use $\lambda$ to denote an $\ell$-partition, a weight and a $01$-sequence in this paper.  The meaning of  $\lambda$ can be read from the context.\end{rem}

 For each $\lambda\in \Lambda_{I, n, c}$, let
\begin{equation}\label{vlambdatensor}
v_{\lambda}=\begin{cases}v_{i_1}\wedge v_{i_2}\wedge \ldots \wedge v_{i_{n}}, \quad &\text{ if } c=0, \\
w_{i_1}\wedge w_{i_2}\wedge \ldots \wedge w_{i_{n}}, \quad &\text{ if } c=1. \end{cases}
\end{equation}
such that   $i_1<\ldots< i_{n}$ and $\lambda_{i_j}\neq c$,  $1\le j\le n$.
For example, if $I=\{1, 2, \ldots, N-1\}$ for $N\gg 0$ and $\lambda=(1,1,0,1,1,1, 0, \ldots, 0)\in \{0,1\}^N$, then $\lambda\in \Lambda_{I, 5, 0}$,   $\lambda\in \Lambda_{I, N-5, 1}$ and
\begin{equation} \label{01s} v_\lambda=\begin{cases} v_1\wedge v_2\wedge v_4\wedge v_5\wedge v_6, & \text{if $c=0$,}\\
 w_3\wedge w_7\wedge w_8\wedge\ldots \wedge  w_N, & \text{if $c=1$.}
 \end{cases} \end{equation}
It is easy to see that  $\Lambda^{ n,   c} V_I$ has  basis $\{v_\lambda\mid \lambda\in  \Lambda_{I,  n,  c} \}$.
For $(\underline n, \underline c)\in \mathbb Z_+^{\ell}\times \{0,1\}^{\ell}$, define

 $$\Lambda_{I, \underline n, \underline c}= \Lambda_{I, n_1, c_1}\times \ldots \times \Lambda_{I, n_\ell, c_\ell}.$$
If $\lambda=(\lambda^{(1)},\lambda^{(2)},\ldots, \lambda^{(\ell)} )\in \Lambda_{I, \underline n, \underline c}$,
define
\begin{equation}\label{defofvlambda1}
v_\lambda=v_{\lambda^{(1)}}\otimes v_{\lambda^{(2)}}\otimes \ldots \otimes v_{\lambda^{(\ell)}}.\end{equation}
Then  $\Lambda^{\underline n, \underline  c} V_I=\Lambda^{ n_1,  c_1} V_I \otimes \ldots\otimes \Lambda^{n_\ell, c_\ell} V_I$ has  basis $\{v_\lambda\mid \lambda\in  \Lambda_{I, \underline n, \underline c} \}$.
%where
%$$\Lambda^{\underline n, \underline  c} V_I=\Lambda^{ n_1,  c_1} V_I \otimes \ldots\otimes \Lambda^{n_\ell, c_\ell} V_I.$$
Obviously,  $|\lambda|=\sum_{i=1}^{\ell}|\lambda^{(i)}|$, where $|\lambda^{(i)}|$ (resp., $|\lambda|$) is the weight of $v_{ \lambda^{(i)}}$ (resp., $v_\lambda$).
By  \cite[Definition~2.9]{BLW},    there is a partial order $ \le$  on $\Lambda_{\mathbb Z, \underline n, \underline c}$ such that
$$\lambda \le \mu  \text{ if and only if } |\lambda|=|\mu| \text{ and } \sum_{i=1}^k|\lambda^{(i)}|\ge \sum_{i=1}^k|\mu^{(i)}| \text{ for all   } k.$$
Recall that $\le$ is the Bruhat order defined on $\Lambda_{\mathfrak p}$ in \cite[(3.9)]{BLW}. By \cite[(3.5)]{BLW},   there is a bijection, say $\phi_{\underline n, \underline c}$,  between  $(\Lambda_{\mathbb Z, \underline n, \underline c}, \le )$ and  $(\Lambda_{\mathfrak p}, \le) $ such that
\begin{equation}\label{inversephi} \phi^{-1}_{\underline n, \underline c}(\lambda)= (\lambda^{(1)},\lambda^{(2)},\ldots, \lambda^{(\ell)} ), \text{ for all }  \lambda\in \Lambda_{\mathfrak p}, \end{equation}  where  $\lambda^{(i)}=(\lambda_{ij})_{ j\in \mathbb Z}$ satisfies
\begin{equation}\label{philambda}\lambda_{ij}=\begin{cases} 1-c_i, &\text{if $j=(\lambda+\rho, \delta_l)$,  \text{for some} $ p_{i-1} < l\le p_i $,}\\  c_i, &\text{otherwise.}\\
\end{cases}
\end{equation}
By  \cite[Section~2.8]{BLW},   $\Lambda_J\cong\Lambda_{J, \underline n, \underline c}$,  where  $\Lambda_J=\{\lambda\in \Lambda_{I, \underline n, \underline c}\mid \lambda_{ij}=c_i, \text{whenever $j\not\in J_+$}\}$. The isomorphism sends $(\lambda_{ij})_{j\in I_+}$ to $(\lambda_{ij})_{ j\in J_+}$ for all $ 1\le i\le \ell$.
  Motivated by \cite{BLW}, let $\mathcal C_{J, \underline n, \underline c}$ be the Serre quotient category of $\mathcal O^{\underline n, \underline c}$ with respect to the set $\phi_{\underline n, \underline c}( \Lambda_{J, \underline n, \underline c})$. It follows from  \cite[Section~2]{BLW} that  $\mathcal C_{J, \underline n, \underline c}$ is a highest weight category with respect to the  weight poset $\phi_{\underline n, \underline c}( \Lambda_{J, \underline n, \underline c})$. Its simple    (resp.,  standard, costandard ) objects  are $\{L( \phi_{\underline n, \underline c}(\lambda))\mid \lambda\in \Lambda_{J, \underline n, \underline c}\}$  (resp., $\{M^{\mathfrak p}( \phi_{\underline n, \underline c}(\lambda))\mid \lambda\in \Lambda_{J, \underline n, \underline c}\}$, $\{N^{\mathfrak p}( \phi_{\underline n, \underline c}(\lambda))\mid \lambda\in \Lambda_{J, \underline n, \underline c}\}$), where $N^{\mathfrak p}( \phi_{\underline n, \underline c}(\lambda))$ is the dual parabolic Verma supermodule with the  highest weight $\phi_{\underline n, \underline c}(\lambda)$.
Let $[\mathcal C_{J, \underline n, \underline c}]:=\mathbb C\otimes_{\mathbb Z}K_0(\mathcal C_{J, \underline n, \underline c})$ where $K_0(\mathcal C_{J, \underline n, \underline c})$ is  the Grothendieck group of the category $\mathcal C_{J, \underline n, \underline c}$.
%If $\underline c=\underline{c_0}$ then $\mathcal C_{J, \underline n, \underline{c_0}}$ is the Serre quotient category (in fact, it is a full subcategory in this case) of the parabolic category $\mathcal O^{\underline n, \underline{c_0}}$ for $\mathfrak{gl}_{n}$ with respect to the Levi subalgebra
%$\mathfrak{gl}_{ n_1}\oplus \ldots\oplus \mathfrak{gl}_{ n_\ell}$, where $ n=\sum_{i=1}^\ell  n_i$.

Recall that $V_J$ is the natural $\mathfrak{sl}_J$-module and $W_J$ is its linear dual.
There is an   $\mathfrak{sl}_{J}$-isomorphism
\begin{equation}\label{isomofslj}
\bigwedge^l W_J\cong \bigwedge^{|J_+|-l} V_J
\end{equation}
and the isomorphism sends  $w_{i_1}\wedge w_{i_2}\wedge \ldots \wedge w_{i_l}$  to $v_{j_1}\wedge v_{j_2}\wedge \ldots \wedge v_{j_{|J_+|-l}}$, where $i_1<i_2<\ldots <i_l$, $j_1<j_2<\ldots < j_{|J_+|-l}$ and
\begin{equation}\label{conditionofj}
  \{j_1,j_2,\ldots,j_{|J_+|-l} \}= J_+\setminus \{i_1,i_2,\ldots, i_l\}.
\end{equation}
By  \eqref{vlambdatensor}, this isomorphism   sends $v_\lambda\in \bigwedge^l W_J$ to $v_\lambda\in  \bigwedge^{|J_+|-l} V_J$, where
 $\lambda\in \Lambda_{J,l,1}$. Note that $\Lambda_{J,l,1}$ can be identified with $\Lambda_{J,|J_+|-l,0}$.
 One can understand it via \eqref{01s}. So,  $$\Lambda^{\underline n, \underline c}V_{J}\cong \Lambda^{\tilde{\underline n}, \underline{c_0}}V_{J}$$ as $\mathfrak{sl}_{J}$-modules where \begin{equation}\label{definitionoftildeni}
 \tilde n_i=\left\{
              \begin{array}{ll}
                n_i, & \hbox{ if $c_i=0$,} \\
                |J_+|-n_i, & \hbox{ if $c_i=1$.}
              \end{array}
            \right.
 \end{equation}
 Moreover, this  isomorphism sends $v_\lambda$ to $v_\lambda$. In the following, $(\bigwedge^{\underline n, \underline c}V_{J}, \Lambda_{J, \underline n, \underline c})$ will be identified  with $(\bigwedge^{\tilde{\underline n}, \underline{c_0}}V_{J}, \Lambda_{J, \tilde{\underline n}, \underline{c_0}})$.

\begin{Theorem}\label{unicate}\cite[Theorem~2.12, Theorem~3.10, Lemma~2.19, Corollary~5.30]{BLW}
 \begin{enumerate}

 \item There is an $\mathfrak{sl}_{J}$-isomorphism
$\bigwedge^{\underline n, \underline c}V_{J}\cong [\mathcal C_{J, \underline n, \underline c}],$
sending  $v_\lambda$  (resp., $b^*_\lambda$)  to $[M^{\mathfrak p}( \phi_{\underline n, \underline c}(\lambda))]$ (resp.,
 $[L(\phi_{\underline n, \underline c}(\lambda))]$) for any $\lambda\in \Lambda_{J, \underline n, \underline c}$, where $\{b^*_\lambda \mid \lambda \in \Lambda_{J, \underline n, \underline c}\}$
is the dual canonical basis of  $\Lambda^{\underline n, \underline c}V_{J}$.

\item There is an $\mathfrak{sl}_{J}$-isomorphism $[\mathcal C_{J, \underline n, \underline c}]\cong [\mathcal C_{J, \tilde{\underline n}, \underline{c_0}}]$ sending
        $[M^{\mathfrak p}( \phi_{\underline n, \underline c}(\lambda))]$ (resp., $[N^{\mathfrak p}( \phi_{\underline n, \underline c}(\lambda))]$, $[L(\phi_{\underline n, \underline c}(\lambda))]$) to $[M^{\mathfrak p}( \phi_{\tilde{\underline n}, \underline{c_0}}(\lambda))]$  ( resp., $[N^{\mathfrak p}( \phi_{\tilde{\underline n}, \underline{c_0}}(\lambda))], [L(\phi_{\tilde{\underline n}, \underline{c_0}}(\lambda))]$) for any $\lambda\in \Lambda_{J, \underline n, \underline c}$.
\end{enumerate}
\end{Theorem}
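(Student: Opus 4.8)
The plan is to obtain both statements as consequences of the fact, established in \cite{BLW}, that the super parabolic category $\mathcal O^{\underline n, \underline c}$ is a tensor product categorification of the $\mathfrak{sl}_J$-module $\bigwedge^{\underline n, \underline c} V_J$, and hence that the Serre quotient $\mathcal C_{J, \underline n, \underline c}$ along the finite poset $\phi_{\underline n, \underline c}(\Lambda_{J, \underline n, \underline c})$ is again such a categorification, now in the truncated highest weight sense: the parabolic Verma supermodules $M^{\mathfrak p}(\phi_{\underline n, \underline c}(\lambda))$ are the standard objects, the $N^{\mathfrak p}(\phi_{\underline n, \underline c}(\lambda))$ the costandard objects, and the categorical Chevalley functors $E_i,F_i$ lift the action of the generators $e_i,f_i$ of $\mathfrak{sl}_J$. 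Once this is in place, part (a) is the statement about Grothendieck groups of a tensor product categorification, and part (b) is an application of the uniqueness of such categorifications.

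For (a), I would invoke \cite[Theorem~2.12]{BLW}: complexifying the Grothendieck group of a tensor product categorification of $M$ returns $M$ itself as an $\mathfrak{sl}_J$-module, with the standard monomial basis vector $v_\lambda$ corresponding to the class of the standard object $[M^{\mathfrak p}(\phi_{\underline n, \underline c}(\lambda))]$. Combined with \cite[Theorem~3.10, Lemma~2.19]{BLW}, which identify $\mathcal O^{\underline n, \underline c}$ and its truncations explicitly as such categorifications, this gives the first isomorphism. That the classes $[L(\phi_{\underline n, \underline c}(\lambda))]$ of the simple objects then match the dual canonical basis $\{b^*_\lambda\}$ is forced once one knows the transition matrix between simples and standards: the dual canonical basis is characterised by bar-invariance together with unitriangularity (with respect to the Bruhat order) against the standard basis, and the parabolic Kazhdan--Lusztig/decomposition-number data supplied by \cite{BLW} satisfy exactly these two properties.

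For (b), recall from \eqref{isomofslj}--\eqref{definitionoftildeni} that there is an $\mathfrak{sl}_J$-module isomorphism $\bigwedge^{\underline n, \underline c} V_J \cong \bigwedge^{\tilde{\underline n}, \underline{c_0}} V_J$ which, under the identification $\Lambda_{J, \underline n, \underline c} = \Lambda_{J, \tilde{\underline n}, \underline{c_0}}$, sends $v_\lambda$ to $v_\lambda$ for every $\lambda$; because the underlying isomorphism $\bigwedge^l W_J \cong \bigwedge^{|J_+|-l} V_J$ is equivariant for the bar involutions, it also sends $b^*_\lambda$ to $b^*_\lambda$. Thus $\mathcal C_{J, \underline n, \underline c}$ and $\mathcal C_{J, \tilde{\underline n}, \underline{c_0}}$ are two tensor product categorifications of one and the same $\mathfrak{sl}_J$-module with matching standard bases, so by the uniqueness theorem \cite[Corollary~5.30]{BLW} there is an equivalence of highest weight categories $\mathcal C_{J, \underline n, \underline c} \simeq \mathcal C_{J, \tilde{\underline n}, \underline{c_0}}$ commuting with the categorical $\mathfrak{sl}_J$-actions. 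Such an equivalence automatically carries standard objects to standard objects, costandard to costandard, and simples to simples, compatibly with the common index set $\Lambda_{J, \underline n, \underline c}$; applying $\mathbb C\otimes_{\mathbb Z}K_0(-)$ yields the desired isomorphism $[\mathcal C_{J, \underline n, \underline c}] \cong [\mathcal C_{J, \tilde{\underline n}, \underline{c_0}}]$ sending $[M^{\mathfrak p}(\phi_{\underline n, \underline c}(\lambda))]$ to $[M^{\mathfrak p}(\phi_{\tilde{\underline n}, \underline{c_0}}(\lambda))]$, and likewise for $N^{\mathfrak p}$ and $L$.

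The essential input, and the only genuinely hard step, is the uniqueness statement \cite[Corollary~5.30]{BLW}: that any two tensor product categorifications of a fixed integrable $\mathfrak{sl}_J$-module are equivalent as highest weight categorifications. Proving this from scratch is the technical heart of \cite{BLW}, resting on the rigidity of the associated crystal together with a double-centraliser argument; granting it, everything here reduces to bookkeeping with the combinatorial bijection $\phi_{\underline n, \underline c}$ and the bar-equivariance of \eqref{isomofslj}, which is routine.
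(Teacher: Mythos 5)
The paper does not give its own proof of this theorem: it is stated as a direct citation from \cite{BLW}, and the preamble to Section~4 explicitly says ``All the results can be found in \cite{BLW} except Lemma~\ref{irrunderpi}.'' Your proposal is therefore not being compared against an in-text argument, but against the intended unpacking of that citation, and in this respect it is essentially correct and follows exactly the route the citation indicates: part~(a) is the Grothendieck-group statement for a tensor product categorification (\cite[Theorem~2.12, Theorem~3.10, Lemma~2.19]{BLW}), with the dual canonical basis matched to classes of simples via the bar-invariance/unitriangularity characterisation, and part~(b) is obtained by feeding the factor-by-factor identification $\bigwedge^{n_i,c_i}V_J\cong\bigwedge^{\tilde n_i,0}V_J$ (so that both categories categorify the same tensor product data with the same indexed monomial basis) into the uniqueness theorem \cite[Corollary~5.30]{BLW}, and then taking $K_0$. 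One small remark: your invocation of bar-equivariance of $\bigwedge^l W_J\cong\bigwedge^{|J_+|-l}V_J$ to conclude $b^*_\lambda\mapsto b^*_\lambda$ is unnecessary for part~(b); the equivalence furnished by the uniqueness theorem already carries standards to standards with the compatible indexing, whence costandards to costandards and simples to simples automatically, which is all that the statement requires.
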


We need some definitions to state the results for the blocks of $\H_{\ell, r}$ (see~\cite[Section~3.2]{BK}).
Any finite dimensional $\H_{\ell, r}$-module $M$ has a generalized weight spaces decomposition
$$M=\bigoplus_{\mathbf i\in \mathbb Z^r}M_{\mathbf i} $$
where $M_{\mathbf i}=\{v\in M\mid (x_k-i_k)^Nv=0 \text{ for all } k=1,2,\ldots, r
\text{ and } N\gg 0\}$. Assuming  $M$ to be  the left regular   $\H_{\ell, r}$-module  yields  a system
of mutually orthogonal idempotents $\{e(\mathbf i)\mid \mathbf i\in \mathbb Z^r\} $ of  $\H_{\ell, r}$ such that $ e(\mathbf i)N=N_{\mathbf i}$ for any finite dimensional $\mathscr H_{\ell, r}$-module $N$.

Let $\{\alpha_i\mid i\in J\}$ be the simple root system  of $\mathfrak {sl}_J$. Then $\alpha=\delta_i-\delta_{i+1}$.
 Set $Q_+:=\sum_{i\in J}\mathbb Z_{\geq 0}\alpha_i$. For any $\alpha\in Q_+$ with height $\text{ht}(\alpha)=r$, write $J^\alpha=\{\mathbf i\in J^r\mid \alpha_{i_1}+\alpha_{i_2}+\ldots+ \alpha_{i_r}=\alpha\}$. Let $e_\alpha= \sum_{ \mathbf i\in \mathbf J^\alpha  }e(\mathbf i)$.    By \cite[Section~3.2]{BK}, $e_\alpha$ is either $0$ or the primitive central idempotent of $\H_{\ell, r}$  with respect to $\alpha$.
 Define   \begin{equation}\label{block1}
  \mathscr H_{\ell, \alpha}:= e_\alpha\mathscr H_{\ell,r}.
  \end{equation}
 Then $\H_{\ell, \alpha} $ is either $0$ or a single  block of $\mathscr H_{\ell,r}$ corresponding to $\alpha$.
    By \cite[Theorem~2.14]{BLW}
 \begin{equation} \label{block}\End_{\mathcal C_{J, \underline n, \underline c}} (T^{J, r})^{op}\cong \bigoplus_{\alpha\in Q_+, \text{ht}(\alpha)=r}\mathscr H_{\ell, \alpha},\end{equation}
where $T^{J, r}$ is given in \eqref{tens321}.
For any $\lambda\in \Lambda_{J, \underline n, \underline c}$, by  a special case of \cite[(2.15)]{BLW},
  \begin{equation}\label{isomoandc}
  \begin{aligned}
    & {\rm Hom}_{\mathcal C_{J, \underline n, \underline c}} (M^{\mathfrak p}(\phi_{\underline n, \underline c}(\lambda)),T^{J,  r})\cong  {\rm Hom}_{\mathcal O^{\underline n, \underline c}} (M^{\mathfrak p}(\phi_{\underline n, \underline c}(\lambda)),T^{J,  r}) , \\
   & {\rm Hom}_{\mathcal C_{J, \underline n, \underline c}} (T^{J,  r},N^{\mathfrak p}(\phi_{\underline n, \underline c}(\lambda)))\cong  {\rm Hom}_{\mathcal O^{\underline n, \underline c}} (T^{J,  r}, N^{\mathfrak p}(\phi_{\underline n, \underline c}(\lambda))),  \\
  & {\rm Hom}_{\mathcal C_{J, \underline n, \underline c}} (L(\phi_{\underline n, \underline c}(\mu)),N^{\mathfrak p}(\phi_{\underline n, \underline c}(\lambda)) )\cong  {\rm Hom}_{\mathcal O^{\underline n, \underline c}}  (L(\phi_{\underline n, \underline c}(\mu)),N^{\mathfrak p}(\phi_{\underline n, \underline c}(\lambda)) ).
\end{aligned}
\end{equation}

Later on,    we do not assume $r\leq \min\{n_1, \ldots, n_\ell\}$.
 It is known   that
there is a crystal  structure on $\Lambda_{J, \underline n, \underline c}$. Let $\Lambda^o$ be the vertex set of the connected component of the crystal graph containing $\kappa_J$,  where  \begin{equation}\label{kaj} \kappa_J=\phi^{-1}_{\underline n, \underline c}(\lambda_J)\end{equation}  and $\lambda_J\in \Lambda_{\mathfrak p}$  in \eqref{kjw}   (see \cite[section~2.10]{BLW}). Let $\pi_{\underline n, \underline c }$ be the   Schur functor $\Hom_{\mathcal C_{J, \underline n, \underline c}}(T^{J, r},-)$.

\begin{lemma}\label{irrunderpi}
 Suppose $\lambda\in\Lambda_{J, \underline n, \underline c}$. Then  $\pi_{\underline n, \underline c}(L(\phi_{\underline n, \underline c}(\lambda))=0$ unless $\lambda\in \Lambda^o$.
  In the later case, if $\text{wt}(v_{\lambda})=\alpha$ with $\text{ht}(\alpha)=r$ and $r\leq\min\{n_1, \ldots, n_\ell\}$,
then $\pi_{\underline n, \underline c}(L(\phi_{\underline n, \underline c}(\lambda))\cong D^{\underline c}(\lambda)$.
\end{lemma}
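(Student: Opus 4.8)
The plan is to realize $\pi_{\underline n,\underline c}=\Hom_{\mathcal C_{J,\underline n,\underline c}}(T^{J,r},-)$ as a Schur functor and then pin down its effect on simple objects in two stages: first a ``support'' statement coming from the crystal, then an identification of the surviving simple with $D^{\underline c}(\lambda)$ coming from the cellular picture. First I would recall from \cite{BLW} that $T^{J,r}$ is projective-injective in $\mathcal C_{J,\underline n,\underline c}$ (it is the image of the prinjective object $L(\lambda_J)\otimes U^{\otimes r}$ of $\mathcal O^{\underline n,\underline c}$ under the Serre quotient), so $\pi_{\underline n,\underline c}$ is exact. Writing $E:=\End_{\mathcal C_{J,\underline n,\underline c}}(T^{J,r})^{\mathrm{op}}\cong\bigoplus_{\mathrm{ht}(\alpha)=r}\H_{\ell,\alpha}$ as in \eqref{block} and viewing $T^{J,r}$ as $Ae$ for an idempotent $e$ in a finite-dimensional algebra $A$ with $\mathcal C_{J,\underline n,\underline c}\simeq A\text{-}\mathrm{mod}$ on the relevant blocks, $\pi_{\underline n,\underline c}$ becomes the Schur functor $e(-)$. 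Hence for every simple $L$ the module $\pi_{\underline n,\underline c}(L)$ is either $0$ or simple, the nonzero ones run exactly once through the simple $E$-modules, and $\pi_{\underline n,\underline c}(L(\phi_{\underline n,\underline c}(\mu)))\ne 0$ if and only if $L(\phi_{\underline n,\underline c}(\mu))$ occurs in the head of $T^{J,r}$, equivalently its indecomposable projective cover is a summand of $T^{J,r}$.

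For the first assertion, the idea is to exploit the $\mathfrak{sl}_J$-categorification on $\bigoplus_r\mathcal C_{J,\underline n,\underline c}$ from \cite{BLW}: the functor $-\otimes U$ is a direct sum of the Chevalley functors $F_i$, so $T^{J,r}=L(\lambda_J)\otimes U^{\otimes r}$ is built from the tilting object $L(\lambda_J)=T^{J,0}$ by iterated application of the $F_i$. Since on simple objects the $F_i$ and their biadjoints $E_i$ realize the crystal operators on the perfect basis $\{[L(\phi_{\underline n,\underline c}(\mu))]\}$ (\cite[Section~2.10]{BLW}), any $L(\phi_{\underline n,\underline c}(\lambda))$ in the head of $T^{J,r}$ must lie in the connected component of the crystal graph through $\kappa_J=\phi_{\underline n,\underline c}^{-1}(\lambda_J)$, that is $\lambda\in\Lambda^o$; this gives $\pi_{\underline n,\underline c}(L(\phi_{\underline n,\underline c}(\lambda)))=0$ for $\lambda\notin\Lambda^o$. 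For the converse, needed below, I would compare cardinalities block by block: by Brundan--Kleshchev's degenerate categorification theorem \cite{BK} together with \eqref{block}, the number of simple modules of $\H_{\ell,\alpha}$ equals $|\Lambda^o\cap\mathrm{wt}^{-1}(\alpha)|$, and since the surviving $\lambda$'s are already contained in $\Lambda^o$ they must exhaust it; hence $\pi_{\underline n,\underline c}(L(\phi_{\underline n,\underline c}(\lambda)))\ne 0$ for every $\lambda\in\Lambda^o$.

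Finally, suppose $\lambda\in\Lambda^o$ with $\mathrm{wt}(v_\lambda)=\alpha$ and $\mathrm{ht}(\alpha)=r\le\min\{n_1,\dots,n_\ell\}$. In this range Theorem~\ref{kernel}, \eqref{block} and \eqref{isomoandc} identify $\H_{\ell,r}=\H_{\ell,\alpha}$ with $E$, and $\phi_{\underline n,\underline c}(\lambda)$ is the weight $\tilde\lambda$ of Definition~\ref{defofhigest}. By Corollary~\ref{picell}, $\pi_{\underline n,\underline c}(N^{\mathfrak p}(\tilde\lambda))\cong\tilde S^{\underline c}(\lambda')$; by exactness of $\pi_{\underline n,\underline c}$ and the fact that $N^{\mathfrak p}(\tilde\lambda)$ has simple socle $L(\tilde\lambda)$, the nonzero simple module $\pi_{\underline n,\underline c}(L(\tilde\lambda))$ embeds into $\tilde S^{\underline c}(\lambda')$. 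By Proposition~\ref{dualofcells}, $\tilde S^{\underline c}(\lambda')\cong S^{\underline c}(\lambda)^{\circledast}$, whose socle is the contragredient dual of the head $D^{\underline c}(\lambda)$ of the cell module $S^{\underline c}(\lambda)$; the invariant form $\phi^{\underline c}_\lambda$ makes $D^{\underline c}(\lambda)$ self-dual, so this socle is $D^{\underline c}(\lambda)$ itself, and therefore $\pi_{\underline n,\underline c}(L(\phi_{\underline n,\underline c}(\lambda)))\cong D^{\underline c}(\lambda)$.

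The main obstacle will be the first assertion: controlling which simple objects of $\mathcal C_{J,\underline n,\underline c}$ are killed by the Schur functor. This rests on importing the crystal-theoretic description of the categorification functors on simple objects from \cite{BLW} (or, equivalently, on the count of simple $\H_{\ell,\alpha}$-modules via \cite{BK}), and on carefully matching the three competing labelling schemes in play --- the $01$-sequences and crystal on $\Lambda_{J,\underline n,\underline c}$, the $\mathfrak p$-dominant weights $\phi_{\underline n,\underline c}(\lambda)$, and the $\ell$-multipartition labels of the $D^{\underline c}(\lambda)$ --- in particular checking $\phi_{\underline n,\underline c}(\lambda)=\tilde\lambda$ on the nose.
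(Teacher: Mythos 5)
Your proof is correct and follows essentially the same strategy as the paper's: use that $T^{J,r}$ is projective-injective in $\mathcal C_{J,\underline n,\underline c}$ so that $\pi_{\underline n,\underline c}=\Hom(T^{J,r},-)$ is an exact Schur functor sending simples to simples-or-zero; invoke \cite{BLW} to see which simples survive; and then identify the surviving simple $\pi_{\underline n,\underline c}(L(\tilde\lambda))$ with $D^{\underline c}(\lambda)$ via Corollary~\ref{picell} and Proposition~\ref{dualofcells}. The one point where your route genuinely differs is the socle identification. The paper gets there through \cite[Corollary~3.1c]{BDK}, which yields the Hom-space isomorphism \eqref{homone} between $\Hom_{\mathcal C_{J,\underline n,\underline c}}(L(\phi_{\underline n,\underline c}(\mu)),N^{\mathfrak p}(\tilde\lambda))$ and $\Hom_{\H_{\ell,r}}(\pi_{\underline n,\underline c}(L(\phi_{\underline n,\underline c}(\mu))),\tilde S^{\underline c}(\lambda'))$ for all $\mu$, and hence determines the socle of $\tilde S^{\underline c}(\lambda')$ directly. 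You instead just apply left-exactness of the Schur functor to the inclusion $L(\tilde\lambda)\hookrightarrow N^{\mathfrak p}(\tilde\lambda)$ to embed the nonzero simple $\pi_{\underline n,\underline c}(L(\tilde\lambda))$ into $\tilde S^{\underline c}(\lambda')\cong S^{\underline c}(\lambda)^\circledast$, and then nail it down as the whole socle by simplicity on both sides together with the self-duality of $D^{\underline c}(\lambda)$. This is more elementary and avoids the BDK machinery (whose full strength, namely the vanishing of \eqref{homone} for $\mu\neq\lambda$, you do not actually need); the residual appeal to the standard cellular fact that $\operatorname{hd}S^{\underline c}(\lambda)=D^{\underline c}(\lambda)$ is also implicit in the paper's last displayed line. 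For the first assertion, you unpack \cite[Theorem~2.24]{BLW} into a crystal-operator argument for one inclusion plus a count of simples of $\H_{\ell,\alpha}$ via \cite{BK} for the other, whereas the paper cites the theorem outright; that is a difference of exposition, not of mathematical content.
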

\begin{proof} Since $T^{J,  r}$ is tilting, $L(\phi_{\underline n, \underline c}(\lambda))$ is a constituent of  socle of  $T^{J,  r}$ if and  only if
it is a constituent of  head of $T^{J,  r}$. By \cite[Theorem~2.24]{BLW},  $\lambda\in \Lambda^o$ and hence $\pi_{\underline n, \underline c}(L(\phi_{\underline n, \underline c}(\lambda)))\neq0$ if and only if $\lambda\in \Lambda^o$.
Since  $T^{J, r}$ is   projective in $C_{J, \underline n, \underline c}$,
 all non-zero   $\pi_{\underline n, \underline c}(L(\phi_{\underline n, \underline c}(\lambda)))$'s give  a complete set of  non-isomorphic irreducible $\End_{C_{J, \underline n, \underline c}}(T^{J,  r})^{op}$--modules.
%, the latter is isomorphic to $\bigoplus_{\alpha\in Q_+, \text{ht}(\alpha)=r}\mathscr H_{\ell,r}^\alpha$ (see \eqref{block}).
 Using \cite[Corollary~3.1c]{BDK} and ~\eqref{block} yields
\begin{equation}\label{homone}
\Hom_{\mathcal C_{J,\underline n, \underline c}}(L(\phi_{\underline n, \underline c}(\mu)), N^{\mathfrak p}(\phi_{\underline n, \underline c}(\lambda)))\cong \Hom_{\mathscr H_{\ell,r}}(\pi_{\underline n, \underline c}(L(\phi_{\underline n, \underline c}(\mu))), \pi_{\underline n, \underline c}(N^{\mathfrak p}(\phi_{\underline n, \underline c}(\lambda))))
\end{equation}
 for $ \mu \in \Lambda_{J, \underline n, \underline c}$
and $ \lambda\in \Lambda^o$.  When $r\le \min\{n_1, \ldots, n_\ell\}$, by  Corollary~\ref{picell}, \eqref{isomoandc} and Proposition~\ref{dualofcells},
 $$0\neq \pi_{\underline n, \underline c}(L(\tilde\lambda))\cong \text{Soc} (\tilde {S}^{\underline c}(\lambda'))\cong \text{hd}(S^{\underline c}(\lambda)), $$  forcing  $\pi_{\underline n, \underline c}(L(\tilde\lambda))\cong D^{\underline c}(\lambda)$.\end{proof}

\section{Isomorphisms between simple $\H_{\ell, r}$-modules}

The aim of this section is to give explicit isomorphisms between simple $\mathscr H_{\ell, r}$-modules defined via various cellular bases.
We start by recalling some results from
%$\mathfrak {sl}_J$-module structure on   $\sum_{\alpha\in Q_+}[\mathscr H_{\ell,\alpha}\text{-mod}] $ in
\cite[section~3]{BK}.
For each $\alpha\in Q_+$, let $\mathscr H_{\ell, \alpha }$ be defined  in \eqref{block1}.
Given $i\in J$, there is an algebra homomorphism
$$ \tau_{\alpha,\alpha_i}: \mathscr H_{\ell, \alpha }\rightarrow \mathscr H_{\ell, \alpha+\alpha_i}$$
which maps  $e_\alpha$
to $e_{\alpha,\alpha_i}:= \sum_{\mathbf i\in J^{\alpha+\alpha_i}, i_{r+1}=i}e(\mathbf i)$. Associated to $\tau_{\alpha,\alpha_i} $, there are
 exact induction and restriction functors
\begin{equation}\label{eifunctor}
e_i : \mathscr H_{\ell, \alpha+\alpha_i}\text{-mod}\rightarrow  \mathscr H_{\ell, \alpha}\text{-mod}
\end{equation}
and
\begin{equation}\label{fifunctor}
f_i : \mathscr H_{\ell, \alpha}\text{-mod}\rightarrow  \mathscr H_{\ell, \alpha+\alpha_i}\text{-mod},
\end{equation}
where $e_iM=e_{\alpha,\alpha_i}M$ is  viewed as an $\mathscr H_{\ell,\alpha}$-module via $ \tau_{\alpha,\alpha_i} $, and $f_iN=\mathscr H_{\ell, \alpha+\alpha_i}e_{\alpha,\alpha_i}\otimes _{\mathscr H_{\ell,\alpha}}N$, for
any $ M\in \mathscr H_{\ell, \alpha+\alpha_i}\text{-mod}$ and $ N\in\mathscr H_{\ell, \alpha}\text{-mod}$.
By \cite[section~3]{BK}, $\sum_{\alpha\in Q_+}[\mathscr H_{\ell,\alpha}\text{-mod}] $
is a left  $\mathfrak {sl}_J$-module  such that  $e_{i,i+1}$ and $e_{i+1,i}$  act on $\sum_{\alpha\in Q_+}[\mathscr H_{\ell,\alpha}\text{-mod}] $ via
 the functors $e_i$ and $f_i$ in \eqref{eifunctor}--\eqref{fifunctor}, respectively.
Let $\Lambda_{\ell,r}^{{\underline{n}}}(\alpha)=\{ \lambda\in \Lambda_{\ell,r}^{{\underline{n}}}\mid  e_\alpha \tilde {S}^{\underline c}(\lambda')\neq 0\}$.
Write
$$\bar{\Lambda}_{\ell,r}^{{\underline{n}}}:= \bigcup_{\alpha\in Q_+, \text{ht}(\alpha)=r} \Lambda_{\ell,r}^{{\underline{n}}}(\alpha).$$
  We consider quaternary pair $(\Lambda_{J, \underline n, \underline c},  \mathcal C_{J, \underline n, \underline c}, \Lambda^{\underline n, \underline c}V_{J}, \bar\Lambda_{\ell,r}^{{\underline{n}}})$ and
 $(\Lambda_{J, \tilde{\underline n}, \underline{c_0}}, \mathcal C_{J, \tilde{\underline n}, \underline{c_0}}, \Lambda^{\tilde{\underline n}, \underline{c_0}}V_{J}, \bar\Lambda_{\ell,r}^{\tilde{\underline n}})$, where
 $\tilde n$ is given in \eqref{definitionoftildeni}.
 By \eqref{philambda},  $$\phi_{\underline n, \underline c}(\lambda)=\sum_{j=1}^{m+n}a^{\lambda,\underline n, \underline c}_j\delta_j-\rho, \ \ \text{ for all $\lambda\in \Lambda_{J, \underline n, \underline c}$,}$$ where
\begin{equation}\label{alambda}
(a_{p_{k-1}+1}^{\lambda,\underline n, \underline c},a_{p_{k-1}+2}^{\lambda,\underline n, \underline c},\ldots, a_{p_{k}}^{\lambda,\underline n, \underline c})=\begin{cases}
(i_{n_k}, i_{n_k-1},\ldots, i_1)\quad  &\text{ if } c_k=0,\\
(-i_1,-i_2,\ldots,-i_{n_k})\quad & \text{ if } c_k=1,
\end{cases}
\end{equation}
and $(i_1,i_2,\ldots, i_{n_k})$ is given in \eqref{vlambdatensor} for all $1\le k\le \ell$.
Let $\gamma_{\underline n, \underline c}: \Lambda_{J, \underline n, \underline c}\rightarrow\cup_{r=0}^{\infty} \bar\Lambda_{\ell,r}^{{\underline{n}}}$
be the bijective map such that $\gamma_{\underline n, \underline c}(\lambda)= (\lambda^{(1)}, \lambda^{(2)}, \ldots, \lambda^{(\ell)})$, where
\begin{equation}\label{lambdapa}
\lambda^{(k)}=(a_{p_{k-1}+1}^{\lambda,\underline n, \underline c},a_{p_{k-1}+2}^{\lambda,\underline n, \underline c},\ldots, a_{p_{k}}^{\lambda,\underline n, \underline c} )-(a_{p_{k-1}+1}^{\kappa_J, \underline n, \underline c },a_{p_{k-1}+2}^{\kappa_J,\underline n, \underline c},\ldots, a_{p_{k}}^{\kappa_J,\underline n, \underline c} ).\end{equation}
Let $ \eta: \bar\Lambda_{\ell,r}^{\tilde{\underline{n}}} \rightarrow  \bar\Lambda_{\ell,r}^{{\underline{n}, }}$  be the map such that
\begin{equation}\label{defofsigma} \eta(\gamma_{\tilde{\underline n}, \underline{c_0}}(\lambda))=\gamma_{\underline n, \underline c}(\lambda),\quad \text{ for  all }
\lambda \in\Lambda_{J, \underline n, \underline c}.\end{equation}
If $\text{wt}(v_{\lambda})=\alpha$ with $\text{ht}(\alpha)=r$ and $r\leq\min\{n_1, \ldots, n_\ell\}$,
by Lemma~\ref{moduleisomofhe}, we have
$\pi_{\underline n, \underline c}(M^{\mathfrak p}(\phi_{\underline n, \underline c}(\lambda)))\cong \tilde {S}^{\underline c}(\mu)$, where $\mu $ is the dual partition  of $\gamma_{\underline n, \underline c}(\lambda)$ such that $\widetilde{ \mu'}=\phi_{\underline n, \underline c}(\lambda)$ (see Definition~\ref{defofhigest}(a)).

\begin{lemma}\label{bijectionofp} If $\lambda=(\lambda^{(1)}, \ldots, \lambda^{(\ell)})\in \bar\Lambda_{\ell,r}^{\tilde{\underline{n}}}$, then
$\eta(\lambda)=(\mu^{(1)},\ldots, \mu^{(\ell)})$ where  $\mu^{(i)}=\lambda^{(i)}$ if $c_i=0$ and $\mu^{(i)}=(\lambda^{(i)})'$ if $c_i=1$.
\end{lemma}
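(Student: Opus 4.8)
The plan is to unwind the definition \eqref{defofsigma} of $\eta$ and reduce the statement to a purely combinatorial identity, one block at a time, between two instances of the map $\gamma$; that identity then follows from the standard encoding of partitions by Maya diagrams (equivalently, $\beta$-sets) together with the description of conjugation in that language.

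First I would fix $\lambda=(\lambda^{(1)},\dots,\lambda^{(\ell)})\in\bar\Lambda_{\ell,r}^{\tilde{\underline n}}$ and write $\lambda=\gamma_{\tilde{\underline n},\underline{c_0}}(\mu)$ for the unique $\mu\in\Lambda_{J,\tilde{\underline n},\underline{c_0}}$. By \eqref{defofsigma}, $\eta(\lambda)=\gamma_{\underline n,\underline c}(\mu)$, where $\mu$ is regarded as an element of $\Lambda_{J,\underline n,\underline c}$ through the identification coming from \eqref{isomofslj}--\eqref{definitionoftildeni}; on $01$-sequences this identification merely restricts to $J_+$ and changes the base colour, so it leaves each restricted sequence $(\mu^{(i)}_j)_{j\in J_+}$ unchanged. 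Since both $\gamma_{\underline n,\underline c}$ and $\gamma_{\tilde{\underline n},\underline{c_0}}$ are defined block by block via \eqref{alambda}, \eqref{lambdapa}, it suffices to show, for each $k$, that the $k$-th component of $\gamma_{\underline n,\underline c}(\mu)$ equals that of $\gamma_{\tilde{\underline n},\underline{c_0}}(\mu)$ when $c_k=0$, and equals its conjugate when $c_k=1$. When $c_k=0$ there is nothing to prove: $\tilde n_k=n_k$ and the $k$-th block of $\underline{c_0}$ coincides with that of $\underline c$, and from \eqref{kjw}, \eqref{philambda} one sees that $\kappa_J$ (see \eqref{kaj}) has the same $k$-th component in both settings, so \eqref{alambda}, \eqref{lambdapa} return the same partition.

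The content is the case $c_k=1$. After translating by $\text{Inf}\,J$ one may assume $\text{Inf}\,J=1$; put $N=|J_+|$, $n=n_k$ and $\tilde n=N-n=\tilde n_k$, so $J_+=\{1,\dots,N\}$. Let $i_1<\dots<i_n$ be the positions of $J_+$ at which $\mu^{(k)}$ differs from $c_k=1$, and let $i'_1<\dots<i'_{\tilde n}$ enumerate the complementary positions $\{1,\dots,N\}\setminus\{i_1,\dots,i_n\}$. Substituting the explicit form of $\kappa_J$ from \eqref{kjw}, \eqref{philambda} into \eqref{alambda}, \eqref{lambdapa}, the sign occurring in the $c_k=1$ branch of \eqref{alambda} shows that $\gamma_{\underline n,\underline c}(\mu)^{(k)}$ is the partition whose $n$-element $\beta$-set is $\{\,N-i_j:1\le j\le n\,\}\subseteq\{0,\dots,N-1\}$, while the $c=0$ branch (now with $\tilde n$ rows, applied to the positions $i'_j$) shows that $\gamma_{\tilde{\underline n},\underline{c_0}}(\mu)^{(k)}$ is the partition whose $\tilde n$-element $\beta$-set is $\{\,i'_j-1:1\le j\le\tilde n\,\}$. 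Since $\{i'_1,\dots,i'_{\tilde n}\}$ is the complement of $\{i_1,\dots,i_n\}$ in $\{1,\dots,N\}$, the second $\beta$-set is exactly $\{\,(N-1)-b:b\in\{0,\dots,N-1\}\setminus\{N-i_j:1\le j\le n\}\,\}$, i.e. the reversal-and-complement of the first; by the classical fact that reversing a Maya diagram and exchanging its two symbols conjugates the corresponding partition, the two partitions are conjugate. Thus $\lambda^{(k)}=(\eta(\lambda)^{(k)})'$, equivalently $\eta(\lambda)^{(k)}=(\lambda^{(k)})'$, and together with the case $c_k=0$ this is the assertion of the lemma.

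The only delicate point is the bookkeeping: one must keep straight which positions of $J_+$ are the distinguished ones in each base colour, verify that the shift by $\text{Inf}\,J$ and the length $|J_+|$ entering $\kappa_J$ cancel so that the output of $\gamma$ is a genuine partition based at $0$, and notice that the orientation reversal hard-wired into the $c_k=1$ branch of \eqref{alambda} is precisely what produces the conjugation. Everything else is routine; if one wishes to avoid $\beta$-sets altogether, the same conclusion follows by checking directly that $\#\{\,j:\gamma_{\underline n,\underline c}(\mu)^{(k)}_j\ge t\,\}$ equals the $t$-th part of $\gamma_{\tilde{\underline n},\underline{c_0}}(\mu)^{(k)}$ for every $t\ge1$.
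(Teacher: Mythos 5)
Your proof is correct, and it takes a genuinely different route from the paper's. Both proofs reduce to the single-block case $\ell=1$ and normalize away $\text{Inf}\,J$, and both dispose of the case $c_k=0$ immediately. For the substantive case $c_k=1$, the paper proceeds by induction on $|\gamma_{\tilde n,0}(\lambda)|$: starting from the empty partition, it adds one addable node at a time to $\gamma_{\tilde n,0}(\lambda)$, tracks via the explicit formulas \eqref{alambda}--\eqref{lambdapa} how the indices $i_j$ and $j_k$ change, and verifies that the corresponding change to $\gamma_{n,1}(\lambda)$ is exactly the conjugate move. You instead read off, directly from \eqref{kjw}, \eqref{alambda}, \eqref{lambdapa}, that the $n$-element $\beta$-set of $\gamma_{n,1}(\lambda)$ is $\{N-i_j\}$ while the $\tilde n$-element $\beta$-set of $\gamma_{\tilde n,0}(\lambda)$ is $\{i'_j-1\}$, observe these are complement-and-reverse of each other in $\{0,\dots,N-1\}$, and invoke the classical fact that this operation on Maya diagrams realizes conjugation of partitions. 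Your argument is shorter and more conceptual, replacing the paper's node-by-node bookkeeping with a single closed-form identification; the trade-off is that it leans on the (standard, but external) $\beta$-set description of conjugation, whereas the paper's induction is entirely self-contained within the combinatorics already set up in \eqref{alambda}--\eqref{lambdapa}. The computations you give for the two $\beta$-sets are correct: from $\lambda^{(k)}_j = N-n_k+j-i_j$ one gets $\beta_j = N-i_j$, and from the $c=0$ branch one gets $\beta_j = i'_j-1$, and the complement of $\{N-i_j\}$ in $\{0,\dots,N-1\}$ under $x\mapsto N-1-x$ is indeed $\{i'_j-1\}$.
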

\begin{proof}
%By \eqref{vlambdatensor},~\eqref{conditionofj} and  \eqref{alambda}--\eqref{lambdapa}, $\mu^{(i)}=\lambda^{(i)}$ if $c_i=0$.
%When  $c_i=1$, it suffices  to assume that  $(\ell, \underline n, \underline c,  J)=(1, n, 1, \{1,2,\ldots, N-1\})$.

Let $\bar \eta$ be the map in (\ref{defofsigma}) for the case  $\ell=1$. By (\ref{alambda})--(\ref{lambdapa})
$$\eta(\gamma_{\tilde{\underline n}, \underline{c_0}}(\lambda))=(\bar \eta(\gamma_{\tilde{ n}_1, 0}(\lambda^{(1)}) ), \ldots, \bar \eta(\gamma_{\tilde{ n}_\ell, 0}(\lambda^{(\ell)}) )).  $$
So,  it is enough to consider $\ell=1$. Moreover, by (\ref{alambda})--(\ref{lambdapa}), the partition $\lambda^{(k)}$ is independent of $\text{Inf} J $  In fact, if $\text{Inf} J$ is changed, then  all numbers in \eqref{alambda} are   shifted with a common number. By \eqref{lambdapa},
 every $\lambda^{(k)}$ is fixed. %We give a concrete example in  Example~\ref{example111}.
  For simplicity, we  assume $J=\{1,2,\ldots, N-1\}$ and hence $\text{Inf} J=1$. If $c=0$, by (3.7), (3.11) and (3.35)-(3.36), we have $\mu=\lambda$.
If $c=1$,  then $\tilde{\underline{n}}=N-n$.
If $v_{\lambda}=v_{i_1}\wedge v_{i_2}\wedge \ldots \wedge v_{i_{N-n}}$ and $\lambda\in\Lambda_{J, \tilde{ n}, 0}$,  by \eqref{conditionofj} and~\eqref{alambda}--\eqref{lambdapa}, we have
\begin{enumerate} \item $
\gamma_{\tilde{n},0}(\lambda)=(i_{N-n}-N+n, i_{N-n-1}-N+n+1, \ldots, i_1-1)$,
\item  $\gamma_{n,1}(\lambda)=(N-n+1-j_1, N-n+2-j_{2}, \ldots, N-j_n)$,
 \item $ j_1<j_2<\ldots< j_{n}$  and  $\{j_1,j_2,\ldots,j_{n}\}=J_+\setminus \{i_1,i_2,\ldots, i_{N-n}\}$.
 \end{enumerate}
%We prove the result by induction on $|\gamma_{N-n,0}(\lambda)|$.

If $|\gamma_{\tilde n,0}(\lambda)|=0$, then $\gamma_{\tilde n,0}(\lambda)=\emptyset$ and $(i_1, i_2, \ldots, i_{N-n})=(1, 2,\ldots, N-n)$. By (c),
$(j_1,j_2,\ldots,j_{n})=(N-n+1,N-n+2,\ldots,N)$ and hence  $\gamma_{n,1}(\lambda)=\emptyset$.
Suppose  the result holds for $ \gamma_{\tilde n ,0}(\lambda)$. We need to prove the result for any $\gamma_{\tilde n,0}(\mu)$, where  $\gamma_{\tilde n,0}(\mu)$ is obtained from $ \gamma_{\tilde n,0}(\lambda)$ by adding an \text{addable node}, say in the $l$th row. It is  in the $(\gamma_{\tilde n,0}(\lambda)_l+1)$th column of $\gamma_{\tilde n,0}(\mu)$.
Using  (a) yields
\begin{equation}\label{sigmalambdal} \gamma_{\tilde n,0}(\lambda)_l=i_{N-n-l+1}-(N-n-l+1).
\end{equation}
By induction assumption, $\gamma_{n,1}(\lambda)$ is the dual partition  of $
\gamma_{\tilde{n},0}(\lambda)$.  So, the $(\gamma_{\tilde n,0}(\lambda)_l+1)$th component  of $\gamma_{n,1}(\lambda)$ is $l-1$. By (b),
\begin{equation}\label{sigmalambdal2}
N-n+\gamma_{\tilde n,0}(\lambda)_l+1-j_{ \gamma_{\tilde n,0}(\lambda)_l+1} =l-1.
\end{equation}
By \eqref{sigmalambdal}--\eqref{sigmalambdal2}, we have
\begin{equation}\label{jandi}
j_{ \gamma_{\tilde n,0}(\lambda)_l+1}=i_{N-n-l+1}+1.
\end{equation}
Note that $\gamma_{\tilde n,0}(\mu)$ is obtained from  $\gamma_{\tilde n,0}(\lambda)$ by replacing $i_{N-n-l+1}$ with $i_{N-n-l+1}+1$. It follows from (a)-(c) and \eqref{jandi} that
$\gamma_{n,1}(\mu)$ is obtained from  $\gamma_{n,1}(\lambda)$ by replacing  $j_{ \gamma_{\tilde n,0}(\lambda)_l+1}$ with $j_{ \gamma_{\tilde n,0}(\lambda)_l+1}-1$. Now the result follows from \eqref{sigmalambdal2}
and induction assumption on $ \gamma_{\tilde n,0}(\lambda)$.
\end{proof}

\begin{example}\label{example111} Assume $(\ell, \underline{n}, \underline c, J_+)=(1, n, 1,\{1, 2, \ldots,  N\})$.
If  $$v_{\lambda}=v_{1}\wedge v_{2}\wedge \ldots \wedge v_{N-n-1}\wedge v_N\in \Lambda^{\tilde n} V_J,$$ then the corresponding $v_\lambda\in \Lambda^{n}W_J$ is
$w_{N-n}\wedge w_{N-n+1}\wedge \ldots \wedge w_{N-1}$ (see \eqref{conditionofj}).
By \eqref{alambda}--\eqref{lambdapa},
$\gamma_{\tilde n,0}(\lambda)=(n,0,0,\ldots,0)$ and $\gamma_{n,1}(\lambda)=(1^n)$ which is $\gamma_{\tilde n,0}(\lambda)'$.
 %Note that the two partitions here are not changed if $\text{Inf} J$ is changed to any integer.
\end{example}

The following result  is the first case of our main result. We explain  the idea   as follows. Using  Lemma~\ref{moduleisomofhe} and  Corollary~\ref{picell}, we establish an explicit relationship between parabolic
 (dual) Verma supermodules in $\mathcal O^{\underline n, \underline c}$  and the cell modules $ \tilde S^{\underline c}(\lambda')$'s of $\mathscr H_{\ell, r}$. This leads to Lemma~\ref{irrunderpi}, which gives an explicit relationship between  simple  modules in $\mathcal O^{\underline n, \underline c}$ and the simple $\mathscr H_{\ell, r}$-modules $D^{\underline c}(\lambda)$'s. For any $(\underline n, \underline c)$, we have a categorification  of the unique irreducible summand of $\wedge^{\underline n, \underline c} V_J$  with   highest weight $|\kappa_J|$ via  $\sum_{\alpha\in Q_+} [\mathscr H_{\ell, \alpha}\text{-mod}]$, where $\kappa_J$ is defined in \eqref{kaj}. Finally,  using Brundan-Losev-Webster's results on uniqueness of tensor product categorification (cf. Theorem~\ref{unicate}) and  the  $\mathfrak{sl}_J$-isomorphism $\wedge^{\underline n, \underline c} V_J \cong \wedge^{\underline {\tilde n}, \underline {c_0}} V_J$, we determine whether  $D^{\underline c} (\lambda)$  is isomorphic to  $D^{\underline {c_0}}(\mu)$ or not.

 Recall that $\Lambda^o$ and  $\gamma_{\tilde{\underline n}, \underline{c_0}}$ are given in Lemma~\ref{irrunderpi} and  \eqref{lambdapa} respectively.
\begin{Theorem}\label{main1} Suppose $\lambda\in \Lambda_{\ell, r}$. Then  $D^{\underline {c_0}}(\lambda)\neq 0$ if and only if  $\lambda\in \gamma_{\tilde{\underline n}, \underline {c_0}}(\Lambda^o)$ for some  $\underline n$ such that $r\le \min \{n_1, \ldots, n_\ell\}$. Moreover,  $D^{\underline {c_0}}(\lambda)\cong D^{\underline c}(\eta(\lambda))$ for any  $\lambda\in \gamma_{\tilde{\underline n}, \underline {c_0}}(\Lambda^o)$, where  $\eta(\lambda)$ is given in Lemma~\ref{bijectionofp}.
\end{Theorem}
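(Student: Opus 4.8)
The plan is to realize $\mathscr H_{\ell,r}$ in two ways as an endomorphism algebra of the tilting generator $T^{J,r}$ — once inside $\mathcal C_{J,\tilde{\underline n},\underline{c_0}}$ and once inside $\mathcal C_{J,\underline n,\underline c}$ for some $\underline n$ with $r\le\min_i n_i$ — and to transport the simple modules through the uniqueness equivalence for tensor product categorifications of \cite[Theorem~3.10]{BLW}. First I would fix $\underline c$, choose $\underline n$ with $r\le\min_i n_i$ (so also $r\le\min_i\tilde n_i$, since $\tilde n_i\in\{n_i,|J_+|-n_i\}$ and $|J_+|\ge 2\max_i n_i$), and record that $\Lambda^o$ and the $01$-tuple $\kappa_J$ of \eqref{kaj} are the same for $(\underline n,\underline c)$ and for $(\tilde{\underline n},\underline{c_0})$ under the identification $\bigwedge^{\underline n,\underline c}V_J\cong\bigwedge^{\tilde{\underline n},\underline{c_0}}V_J$ (the discussion around \eqref{isomofslj}--\eqref{definitionoftildeni}). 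By \eqref{block} one has $\End_{\mathcal C_{J,\underline n,\underline c}}(T^{J,r})^{op}\cong\bigoplus_{\text{ht}(\alpha)=r}\mathscr H_{\ell,\alpha}=\mathscr H_{\ell,r}$, compatibly with the generators $x_1,s_1,\dots,s_{r-1}$ of \eqref{operator--1}, and likewise for $(\tilde{\underline n},\underline{c_0})$. Since $T^{J,r}$ is projective in $\mathcal C$, Lemma~\ref{irrunderpi} then says that for $\nu\in\Lambda^o$ with $\text{ht}(\text{wt}(v_\nu))=r$ the modules $\pi_{\underline n,\underline c}(L(\phi_{\underline n,\underline c}(\nu)))\cong D^{\underline c}(\gamma_{\underline n,\underline c}(\nu))$, resp. $\pi_{\tilde{\underline n},\underline{c_0}}(L(\phi_{\tilde{\underline n},\underline{c_0}}(\nu)))\cong D^{\underline{c_0}}(\gamma_{\tilde{\underline n},\underline{c_0}}(\nu))$, are non-zero and, as $\nu$ varies, exhaust without repetition the simple $\mathscr H_{\ell,r}$-modules.

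This gives the first assertion at once. For $\Leftarrow$ I would take $\lambda=\gamma_{\tilde{\underline n},\underline{c_0}}(\nu)$ with $\nu\in\Lambda^o$; as $|\lambda|=r$ forces $\text{ht}(\text{wt}(v_\nu))=r$, Lemma~\ref{irrunderpi} yields $D^{\underline{c_0}}(\lambda)\cong\pi_{\tilde{\underline n},\underline{c_0}}(L(\phi_{\tilde{\underline n},\underline{c_0}}(\nu)))\ne0$. For $\Rightarrow$, given $\lambda\in\Lambda_{\ell,r}$ with $D^{\underline{c_0}}(\lambda)\ne0$, I would apply the previous paragraph to the datum $(\tilde{\underline n},\underline{c_0})$ (whose sign sequence is $\underline{c_0}$) and use that $\mu\mapsto D^{\underline{c_0}}(\mu)$ is injective on $\{\mu\in\Lambda_{\ell,r}:D^{\underline{c_0}}(\mu)\ne0\}$ to conclude $\lambda=\gamma_{\tilde{\underline n},\underline{c_0}}(\nu)$ for some $\nu\in\Lambda^o$.

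For the isomorphism $D^{\underline{c_0}}(\lambda)\cong D^{\underline c}(\eta(\lambda))$, the crucial input is that $\mathcal C_{J,\underline n,\underline c}$ and $\mathcal C_{J,\tilde{\underline n},\underline{c_0}}$ are tensor product categorifications of one and the same $\mathfrak{sl}_J$-module $\bigwedge^{\underline n,\underline c}V_J\cong\bigwedge^{\tilde{\underline n},\underline{c_0}}V_J$ with the same weight poset $\Lambda_{J,\underline n,\underline c}=\Lambda_{J,\tilde{\underline n},\underline{c_0}}$ (cf. Theorem~\ref{unicate}). By the uniqueness theorem \cite[Theorem~3.10]{BLW} there is an equivalence $\Phi\colon\mathcal C_{J,\underline n,\underline c}\to\mathcal C_{J,\tilde{\underline n},\underline{c_0}}$ of highest weight $\mathfrak{sl}_J$-categorifications with $\Phi(L(\phi_{\underline n,\underline c}(\mu)))\cong L(\phi_{\tilde{\underline n},\underline{c_0}}(\mu))$ and $\Phi\circ(-\otimes U)\cong(-\otimes U)\circ\Phi$ compatibly with the categorical action. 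I would then check $\Phi(L(\lambda_J))\cong L(\lambda_J)$ (it is the unique simple of its block), hence $\Phi(T^{J,r})\cong\Phi(L(\lambda_J))\otimes U^{\otimes r}\cong T^{J,r}$, and deduce that the automorphism of $\mathscr H_{\ell,r}\cong\End(T^{J,r})^{op}$ induced by $\Phi$ fixes $x_1,s_1,\dots,s_{r-1}$ from \eqref{operator--1} — because $\Phi$ intertwines the categorical operators — so it is the identity. Therefore $\pi_{\underline n,\underline c}(M)\cong\pi_{\tilde{\underline n},\underline{c_0}}(\Phi(M))$ as $\mathscr H_{\ell,r}$-modules; taking $M=L(\phi_{\underline n,\underline c}(\nu))$ with $\nu\in\Lambda^o$ of height $r$ and applying Lemma~\ref{irrunderpi} on both sides gives $D^{\underline c}(\gamma_{\underline n,\underline c}(\nu))\cong D^{\underline{c_0}}(\gamma_{\tilde{\underline n},\underline{c_0}}(\nu))$. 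Since Lemma~\ref{bijectionofp} identifies $\gamma_{\underline n,\underline c}(\nu)=\eta(\gamma_{\tilde{\underline n},\underline{c_0}}(\nu))$, writing $\lambda=\gamma_{\tilde{\underline n},\underline{c_0}}(\nu)$ yields $D^{\underline{c_0}}(\lambda)\cong D^{\underline c}(\eta(\lambda))$.

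I expect the delicate step to be this last one: passing from the abstract uniqueness equivalence to the concrete identification of the two Schur functors. Concretely, one must verify that $\Phi$ carries the distinguished tilting generator $T^{J,r}$ to $T^{J,r}$ and that the induced automorphism of $\End(T^{J,r})^{op}$ really is the identity of $\mathscr H_{\ell,r}$ — which requires exploiting that $\Phi$ is an equivalence of categorifications (compatible with $-\otimes U$ and with the affine-Hecke-type natural transformations underlying \eqref{operator--1}), not merely an equivalence of highest weight categories.
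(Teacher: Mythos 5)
Your overall strategy---realize $\mathscr H_{\ell,r}$ twice as $\End_{\mathcal C}(T^{J,r})^{op}$, transport simples through the BLW uniqueness theorem, and read off the $\ell$-multipartition labels via Lemma~\ref{irrunderpi} and Lemma~\ref{bijectionofp}---agrees with the paper's, and your handling of the first assertion is the same. Where you diverge is in how the two parameterizations are glued together. The paper decategorifies at once: both Schur functors $\pi_{\underline n,\underline c}$ and $\pi_{\tilde{\underline n},\underline{c_0}}$ induce $\mathfrak{sl}_J$-epimorphisms from the relevant Grothendieck groups onto $\bigoplus_\alpha[\mathscr H_{\ell,\alpha}\text{-mod}]$ with kernel spanned by the classes of simples outside $\Lambda^o$, both quotients are identified with the irreducible $\mathfrak{sl}_J$-module $V(|\kappa_J|)$, and conjugating the isomorphism $\varphi$ of Theorem~\ref{unicate}(b) by the two $\pi$'s produces an $\mathfrak{sl}_J$-automorphism of $V(|\kappa_J|)$ which fixes the unique highest-weight vector $[D^{\underline c}(\emptyset)]$ and is therefore the identity; chasing the formulas $\pi(L(\phi(\nu)))\cong D(\gamma(\nu))$ then gives the desired isomorphism of simples. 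You instead stay at the categorical level: you invoke the strongly equivariant equivalence $\Phi$, check $\Phi(T^{J,r})\cong T^{J,r}$ via the unique-simple-in-its-block argument, and then must show the automorphism of $\mathscr H_{\ell,r}$ induced by $\Phi$ together with a choice of isomorphism $\iota\colon\Phi(T^{J,r})\to T^{J,r}$ is the identity by verifying that it fixes $x_1,s_1,\dots,s_{r-1}$. That last step---which you flag yourself---is precisely where the paper's decategorification buys economy: to make your argument airtight you have to track that the coherence data of a strongly equivariant equivalence intertwines the natural transformations $x,s$ underlying \eqref{operator--1}, compatibly with the chosen $\Phi(L(\lambda_J))\cong L(\lambda_J)$, whereas the paper only needs the elementary fact that a nonzero $\mathfrak{sl}_J$-endomorphism of an irreducible module that fixes a highest-weight vector is the identity. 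Both routes reach the same conclusion; the paper's is shorter and sidesteps the delicate unpacking of the categorification machinery, while yours, if carried out in full, would additionally identify the two Schur functors on the nose rather than merely after passing to Grothendieck groups.
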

\begin{proof}We consider $\underline n$ such that $|\lambda|\le \min\{n_1, n_2, \ldots, n_\ell\}$. In this case,  the first result follows from Lemma~\ref{irrunderpi}.  Note that  $\Lambda_{J, \underline n, \underline c}$ can be identified  with $\Lambda_{J, \tilde{\underline n}, \underline {c_0}}$.
By  Theorem~\ref{unicate},
there is an  isomorphism $\varphi: [\mathcal C_{J,\underline n, \underline c}]\cong [\mathcal C_{J,\tilde{\underline n}, \underline {c_0}}]$ such that
  for any $\mu \in\Lambda_{J, \underline n, \underline c}$,
\begin{equation}\label{key113}\varphi([N^{\mathfrak p}(\phi_{\underline n, \underline c }(\mu))])=[ N^{\mathfrak p}(\phi_{\tilde{\underline n}, \underline {c_0} }(\mu))], \text{ and }  \varphi([L(\phi_{\underline n, \underline c }(\mu))])=[L(\phi_{\tilde{\underline n}, \underline {c_0} }(\mu))].\end{equation}
Define
\begin{equation}\label{defofschurpi}
\pi_{\underline n, \underline c }=\Hom_{\mathcal C_{J,\underline n, \underline c}}(
 \bigoplus_{r\in \mathbb N} T^{J,  r},-), \text{  and   $\pi_{ \tilde{\underline n}, \underline {c_0}} =\Hom_{\mathcal C_{J,\tilde{\underline n}, \underline {c_0}}}(\bigoplus_{r\in \mathbb N}T^{J,  r},-)$.}\end{equation}
 Then  both $\pi_{\underline n, \underline c }$ and $\pi_{ \tilde{\underline n}, \underline {c_0}} $ are exact and induce linear maps from $[C_{J,\underline n, \underline c}]$ and
 $[C_{J,\tilde{\underline n}, \underline {c_0}}]$  to $\sum_{\alpha\in Q_+}[\mathscr H_{\ell,\alpha}\text{-mod}] $, respectively,  where
 $\mathscr H_{\ell,\alpha}$ is the block of $\mathscr H_{\ell,r}$ with respect to $\alpha$ (see \eqref{block1}).
   Moreover, by  Lemma~\ref{polyofx} and  $\mathscr H_{\ell,r-1}$-isomorphism
  $$\Hom_{\mathcal C_{J,\underline n, \underline c}}(
  T^{J,  r},N^{\mathfrak p}(\phi_{\underline n, \underline c }(\mu))) \cong  \Hom_{\mathcal C_{J,\underline n, \underline c}}(
  T^{J,  r-1},N^{\mathfrak p}(\phi_{\underline n, \underline c }(\mu))\otimes U^*),$$
   $\pi_{\underline n, \underline c }$ and $\pi_{ \tilde{\underline n}, \underline {c_0}} $ are both $\mathfrak {sl}_J$-epimorphisms and the kernels of  $\pi_{\underline n, \underline c }$ and $\pi_{ \tilde{\underline n}, \underline {c_0}} $ are spanned by  $\{[L(\phi_{\underline n, \underline c }(\mu))]\mid \mu \in\Lambda_{J, \underline n, \underline c} \text{ and }  \mu\notin \Lambda^o\}$ and $\{[L(\phi_{\tilde{\underline n}, \underline {c_0} }(\mu))]\mid \mu \in\Lambda_{J, \tilde{\underline n}, \underline {c_0}} \text{ and }  \mu\notin \Lambda^o\}$, respectively (see Lemma~\ref{irrunderpi}).  Let $L_{\underline n, \underline c}:=[\mathcal C_{J, \underline n, \underline c}]/\text{ker }\pi_{\underline n, \underline c } $. By Theorem~\ref{unicate} and \cite[Corollary~2.3]{BK},  both $L_{\underline n, \underline c}$ and $L_{\tilde{\underline n}, \underline {c_0}}$ are isomorphic to $V(|\kappa_J|)$, the simple  $\mathfrak {sl}_J$-module with highest weight $|\kappa_J|$. Moreover, the corresponding isomorphisms send $ [L(\phi_{\underline n, \underline c }(\mu))]$ and $ [L(\phi_{\tilde{\underline n}, \underline {c_0} }(\mu))]$ to $b_\mu^*$ for any $\mu\in \Lambda^o$,
 where $\{b_\mu^*\mid \mu\in \Lambda^o \}$ is the dual canonical basis of $V(|\kappa_J|)$, which is the image of the canonical map from $\Lambda^{\tilde{\underline n}, \underline {c_0}  } V_J $ to $V(|\kappa_J|)$.

Abusing  notation, let  $\pi_{\underline n, \underline c}$ be the  isomorphism from $L_{\underline n, \underline c} $ to $\sum_{\alpha\in Q_+}[\mathscr H_{\ell, \alpha}\text{-mod}] $.  Then  there is an $\mathfrak {sl}_J$-automorphism of the irreducible module with highest weight $|\kappa_J|$
 $$ \pi_{\underline n, \underline c}\circ \varphi\circ \pi^{-1}_{\tilde{\underline n}, \underline {c_0}}:\sum_{\alpha\in Q_+}[\mathscr H_{\ell, \alpha}\text{-mod}] \rightarrow\sum_{\alpha\in Q_+}[\mathscr H_{\ell, \alpha}\text{-mod}]. $$
    Suppose $\lambda \in\Lambda_{J, \underline n, \underline c}$.  If $\text{wt}(v_{\lambda})=\alpha$ with $\text{ht}(\alpha)=r$ and $r\leq\min\{n_1, \ldots, n_\ell\}$, by Corollary~\ref{picell}, \eqref{isomoandc} and  Lemma~\ref{irrunderpi}, we have
 \begin{enumerate} \item  $\pi_{\underline n, \underline c }([N^{\mathfrak p}(\phi_{\underline n, \underline c }(\lambda))])=[\tilde {S}^{\underline c}(\mu)]$ and $\pi_{\underline n, \underline c }([L(\phi_{\underline n, \underline c }(\lambda))])=[D^{\underline c}(\mu')]$, \item
  $\pi_{ \tilde{\underline n}, \underline {c_0}}([ N^{\mathfrak p}(\phi_{\tilde{\underline n}, \underline {c_0} }(\lambda))=[\tilde {S}^{\underline {c_0}}(\nu)]$ and $\pi_{ \tilde{\underline n}, \underline {c_0}}([L(\phi_{\tilde{\underline n}, \underline {c_0} }(\lambda))])=[D^{\underline {c_0}}(\nu')]$, \end{enumerate}
\noindent where $\mu$ (resp., $\nu$) is the dual partition of $\gamma_{\underline n, \underline c}(\lambda)$
 (resp., $\gamma_{\underline{ \tilde n}, \underline {c_0}}(\lambda))$.  By (a)-(b) and \eqref{key113}, $\pi_{\underline n, \underline c}\circ \varphi\circ \pi^{-1}_{\tilde{\underline n}, \underline {c_0}}$  sends   $[D^{\underline {c_0}}(\gamma_{\tilde{\underline n}, \underline {c_0}}(\lambda))]$  to $[D^{\underline c}(\gamma_{\underline n, \underline c}(\lambda))]$.
 Since  $\gamma_{\underline n,\underline c}(\kappa_J)=\gamma_{\tilde{\underline n}, \underline {c_0}}(\kappa_J)=\emptyset$ and  $[D^{\underline c}(\emptyset)]$ is the unique highest weight vector of the irreducible module with highest weight $|\kappa_J|$, we have that
 $ \pi_{\underline n, \underline c}\circ \varphi\circ \pi^{-1}_{\tilde{\underline n}, \underline {c_0}}$ is the identity  map and hence  $[D^{\underline {c_0}}(\gamma_{\tilde{\underline n}, \underline {c_0}}(\lambda))]= [D^{\underline c}(\gamma_{\underline n, \underline c}(\lambda))]$,  forcing  $D^{\underline {c_0}}(\gamma_{\tilde{\underline n}, \underline {c_0}}(\lambda))\cong  D^{\underline c}(\gamma_{\underline n, \underline c}(\lambda))$.
 Now the result follows from Lemma~\ref{bijectionofp} and \eqref{defofsigma}.
\end{proof}

Finally, we consider the case when the  $\o$ is replaced by the $\o^\xi$ in \eqref{omxi} for   any  $\xi\in\mathfrak S_\ell$, where $\o=(\omega_1, \ldots, \omega_\ell)$.  For this purpose, we assume
$\omega_i\in \mathbb Z$ such that  $\omega_1\geq\ldots\geq \omega_\ell$.
Suppose $\lambda\in \Lambda_{\ell, r}$. Recall that  $S^\xi(\lambda)$
(resp., $\tilde {S}^\xi(\lambda)$)
is  the associated  right cell module of
$\H_{\ell, r}$ with respect to the cellular basis of $\H_{\ell, r}$ in  Corollary~\ref{Hc cellular}(c) (resp., (d)).
 The corresponding simple head is denoted by $D^\xi({\lambda})$ (resp., $\tilde D^\xi({\lambda}) $).
Let $$\Lambda=\Lambda_{\omega_1}+\Lambda_{\omega_2}+\ldots+\Lambda_{\omega_\ell},$$
where $\Lambda_i$ is the fundamental dominant weight of $\mathfrak {sl}_I$
for  a bounded below   interval $I$ of $\mathbb Z$.
Let Inf $I$ be the minimal element in $I$.
Then the simple $\mathfrak {sl}_I$-module  $V(\Lambda_{\omega_i})$ with highest weight $\Lambda_{\omega_i}$ is isomorphic to  $ \bigwedge^{\omega_i-\text{Inf }I+1}V_I$. Let $n_i=\omega_i-\text{Inf }I+1$. Then $n_1\ge \ldots \ge n_\ell$.  Let \begin{equation}\label{wedgemodule}
F(\Lambda)^{\xi}:=V(\Lambda_{\omega_{(1)\xi}})\otimes V(\Lambda_{\omega_{(2)\xi}})\otimes \ldots \otimes  V(\Lambda_{\omega_{(\ell)\xi}})\cong \bigwedge^{n_{(1)\xi}}V_I\otimes \bigwedge^{n_{(2)\xi}}V_I\otimes \ldots\otimes \bigwedge^{n_{(\ell)\xi}}V_I.
\end{equation}
Then the simple $\mathfrak{sl}_I$-module $V(\Lambda)$ with highest weight $\Lambda$  is a direct summand of $F(\Lambda)^{\xi}$,  for all $ \xi\in\mathfrak S_\ell$. Let $\pi^\xi: F(\Lambda)^{\xi}\rightarrow V(\Lambda)$ be  the  canonical projection.

 Following \cite[Section~2]{Br}, a
 $(\Lambda,\xi)$-tableau is the down-justified tableau with $n_{(i)\xi}$ boxes in the $i$-th column and each entry is an integer. Let $A(i,j)$ be  the entry in the $i$th row and $j$th column of $A$. If $A(i, j)\in I_+$ such that $A(i, j)>A(i+1, j)$ for    $1\le i\le n_{(j)\xi}$ and  $1\le j\le \ell$, then  $A$ is called a  \textit{column-strict tableau}. Let $\text{Col}^{\Lambda,\xi}$ be the set of all column-strict $(\Lambda,\xi)$-tableaux. For any $A\in\text{Col}^{\Lambda,\xi}$, let $$\gamma(A)=(a_1,a_2,\ldots, a_n)$$ be obtained by reading the entries of $A$ from top to bottom along the columns. Then  $n=\sum_{i=1}^\ell n_i$.
Let $P(\gamma(A))$ be the tableau corresponds to the first part of the  image of the word $\gamma(A)$ under the  Robinson-Schensted-Knuth  correspondence (see, e.g. \cite[Section~4.1]{F}). This is a  tableau
$$\emptyset\leftarrow a_1\leftarrow a_2\leftarrow\ldots \leftarrow a_n $$ of some partition, where $ ``\leftarrow"$ denotes the row insertion as in \cite[Section~1.1]{F}. See  Example~\ref{exampleofp}.
A column-strict tableau  $A$ is called standard if $P(\gamma(A))$  is of type $\underline n'$, where $\underline n'$ is the dual partition of $\underline n$. This makes sense since we are assuming  $n_1\geq n_2\geq \ldots\geq n_\ell$.

Let  $\text{Std}^{\Lambda,\xi}$ be  the set of all standard tableaux.
For any $A\in\text{Col}^{\Lambda,\xi}$,
define $$M^\xi_A:= v_{A(1,1)}\wedge v_{A(2,1)}\wedge \ldots \wedge v_{A(n_{(1)\xi},1)}\otimes \ldots \otimes v_{A(1,\ell)}\wedge v_{A(2,1)}\wedge \ldots\wedge v_{A(n_{(\ell)\xi},\ell)}.$$
Then $M_A^\xi$  corresponds to $v_\lambda$ for
$\lambda\in \Lambda_{I,\underline n,\underline {c_0}}$ such that
$\lambda_{ij}=1$ for $j=A(i,k)$, $k=1,2,\ldots,
n_{(i)\xi}$,  and $\lambda_{ij}=0$ otherwise.
 Then $\{M^\xi_A\mid A\in  \text{Col}^{\Lambda,\xi}\}$ is known as the monomial  basis of   $F(\Lambda)^{\xi}$. Let $\{L^\xi_A\mid A\in  \text{Col}^{\Lambda,\xi}\}$ be the   dual canonical basis of $F(\Lambda)^{\xi}$.
For any $A\in  \text{Col}^{\Lambda,\xi}$,  let $ S_A^\xi:=\pi^\xi( M^\xi_A)$ and $D^\xi_A:=\pi^\xi(L^\xi_A)$.
\begin{lemma}\label{isomofcrystal}\cite[Section~2, Theorem~26]{Br}
\begin{enumerate}
 \item There is a crystal graph structure on $\text{Col}^{\Lambda,\xi}$, which  corresponds to $F(\Lambda)^{\xi}$ such that $\text{Std}^{\Lambda,\xi}$ is a sub-crystal which corresponds  to $V(\Lambda)$.
Moreover, there is an isomorphism of crystals $R:\text{Std}^{\Lambda,\xi}\rightarrow  \text{Std}^{\Lambda,1}$ such that
the entries from top to bottom of the $i$th column of  $R(A)$
 are the entries from bottom to top  of the $i$th column  of $P(\gamma(A))$.
\item $ \pi^\xi(L^\xi_A)\neq 0$ if and only if $A\in \text{Std}^{\Lambda,\xi}$.
\item $\{S_A^\xi\mid A\in \text{Std}^{\Lambda,\xi}\}$ is  the monomial    basis of $V(\Lambda)$.
 \item  $\{D_A^\xi\mid A\in \text{Std}^{\Lambda,\xi}\}$ is the  dual canonical  basis of $V(\Lambda)$.
\item $D^\xi_A=D^1_{R(A)}$.
\end{enumerate}
\end{lemma}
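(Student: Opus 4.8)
The plan is to obtain this statement from the combinatorics of crystal bases for tensor products of fundamental $\mathfrak{sl}_I$-representations, following \cite{Br}. First I would make the crystal on $\text{Col}^{\Lambda,\xi}$ explicit. Each tensor factor $\bigwedge^{n_{(i)\xi}}V_I$ of $F(\Lambda)^\xi$ in \eqref{wedgemodule} has crystal realized by the strictly increasing columns of height $n_{(i)\xi}$ with entries in $I_+$, the Kashiwara operators acting by the usual signature rule; the tensor-product crystal then puts a crystal graph structure on the set of $\ell$-tuples of such columns, which is exactly $\text{Col}^{\Lambda,\xi}$ under the identification $A\mapsto M_A^\xi$. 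This realizes the crystal of $F(\Lambda)^\xi$ and proves the first assertion of (a).

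Next I would identify the sub-crystal attached to the summand $V(\Lambda)$. By the compatibility of Schensted row insertion with the Kashiwara operators (equivalently, by the plactic relations of Lascoux and Sch\"utzenberger), the insertion tableau $P(\gamma(A))$ is constant on each connected component of $\text{Col}^{\Lambda,\xi}$, and the component of the highest weight element is precisely $\{A\mid P(\gamma(A))\text{ has type }\underline{n}'\}=\text{Std}^{\Lambda,\xi}$, because $\underline{n}'$ is the type of the top constituent $V(\Lambda)$ of $\bigwedge^{n_1}V_I\otimes\cdots\otimes\bigwedge^{n_\ell}V_I$. This component is a highest weight crystal of highest weight $\Lambda$, hence the crystal of $V(\Lambda)$; running the same argument for $\xi=1$ and comparing highest weight elements shows that the column-reading recipe defining $R$ produces a bijection $\text{Std}^{\Lambda,\xi}\to\text{Std}^{\Lambda,1}$ intertwining the Kashiwara operators, i.e.\ the unique isomorphism of crystals. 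This is the content of \cite[Theorem~26]{Br} and gives (a).

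For (b) and (d) I would invoke the standard compatibility of (dual) canonical bases with based subquotients: since $V(\Lambda)$ is the constituent of maximal highest weight it is simultaneously a based submodule and a based quotient of $F(\Lambda)^\xi$, so the canonical projection $\pi^\xi$ carries each dual canonical basis vector $L_A^\xi$ either to a dual canonical basis vector of $V(\Lambda)$ or to $0$, and the image is nonzero exactly when the crystal vertex of $A$ lies in $\text{Std}^{\Lambda,\xi}$. This proves (b), and (d) is then the assertion that $\{D_A^\xi=\pi^\xi(L_A^\xi)\mid A\in\text{Std}^{\Lambda,\xi}\}$ is the dual canonical basis of $V(\Lambda)$. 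Part (c) is the analogous statement for the monomial bases: $\{M_A^\xi\mid A\in\text{Col}^{\Lambda,\xi}\}$ is the monomial basis of $F(\Lambda)^\xi$ and is unitriangular with respect to the crystal order against $\{L_A^\xi\}$, whence $\{S_A^\xi=\pi^\xi(M_A^\xi)\mid A\in\text{Std}^{\Lambda,\xi}\}$ is the monomial basis of $V(\Lambda)$. Finally, for (e), both $D_A^\xi$ and $D^1_{R(A)}$ are dual canonical basis vectors of $V(\Lambda)$; since the dual canonical basis of a highest weight module is canonically indexed by its crystal and $R$ carries the vertex of $A$ to the vertex of $R(A)$, we conclude $D_A^\xi=D^1_{R(A)}$.

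I expect the only genuine difficulty to be the combinatorial core of the second paragraph: checking that the condition ``$P(\gamma(A))$ has type $\underline{n}'$'' really isolates the connected component of the highest weight element, and that the explicit column-reading recipe for $R$ is the crystal isomorphism. Both reduce to the interaction of RSK/Schensted insertion with the Kashiwara operators, which is precisely the technical input assembled in \cite[Section~2]{Br}; once it is granted, everything above is formal.
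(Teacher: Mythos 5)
The paper gives no proof of this lemma; it is imported wholesale from \cite[Theorem~26]{Br}, so you are reconstructing Brundan's argument rather than the paper's. Your outline captures the right ingredients (tensor-product column crystals, RSK, based submodules/quotients and the behaviour of (dual) canonical and monomial bases under the projection $\pi^\xi$, and the crystal labelling of the dual canonical basis), and the overall strategy is the correct one.

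There is, however, one genuine slip in the second paragraph. You assert that ``the insertion tableau $P(\gamma(A))$ is constant on each connected component of $\text{Col}^{\Lambda,\xi}$.'' That is backwards: under RSK, what is constant on a connected component of the crystal is the \emph{recording} tableau $Q(\gamma(A))$ (equivalently, the \emph{shape} of $P(\gamma(A))$), while the insertion tableau $P(\gamma(A))$ varies and in fact parametrizes the vertices within a component. Knuth equivalence (the plactic relations) fixes $P$, but Knuth classes are transverse to crystal components, not equal to them. Taken literally, your statement would force $R(A)$ --- which you define via $P(\gamma(A))$ --- to be constant on each component, contradicting the bijectivity of $R$ that you invoke two sentences later. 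The fix is harmless: replace ``$P(\gamma(A))$ is constant on each component'' by ``the shape of $P(\gamma(A))$ is constant on each component,'' note that the top shape $\underline n'$ occurs with multiplicity one in the tensor product so it isolates a single component, and then use that $A\mapsto P(\gamma(A))$ restricts to a crystal isomorphism from that component onto the SSYT crystal of shape $\underline n'$. With that correction, the rest of (a) and your treatment of (b)--(e) are fine.
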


For any $\xi\in \mathfrak S_\ell$, let $\mathfrak p^\xi$ be the parabolic subalgebra of $\mathfrak {gl}_n$ with respect to the Levi subalgebra
 $\mathfrak{gl}_{n_{(1)\xi}}\oplus \mathfrak{gl}_{n_{(2)\xi}}\oplus  \ldots\oplus  \mathfrak{gl}_{n_{(\ell)\xi}}$.
Let $\mathcal O^{\mathfrak p^\xi}$ be the corresponding parabolic category $\mathcal O$.
For any $A\in \text{Col}^{\Lambda,\xi} $, if $\gamma(A)=(a_1,a_2,\ldots, a_n)$, then $\sum_{i=1}^n  (a_{i}+i-1 )\delta_i$ is a $\mathfrak p^\xi$-dominant weight.  The corresponding parabolic Verma, simple and  dual Verma modules with  highest weight $\sum _{i=1}^n (a_i+i-1)\delta_i$
are denoted by
$M^{\xi}(A)$, $L^\xi(A)$ and $N^\xi{(A)}$, respectively. For any $\alpha\in Q_+$, let $$\text{Col}_\alpha^{\Lambda,\xi}=\{A\in \text{Col}^{\Lambda,\xi}\mid \text{wt}(M^\xi_A)=\Lambda-\alpha\}, $$
where $\text{wt}(M^\xi_A)$ is the weight of $M^\xi_A$.
Let $\mathcal O_{\alpha}^{\Lambda,\xi}$ be the Serre subcategory of $\mathcal O^{\mathfrak p^\xi}$ with simple objects $\{L^\xi(A)\mid A\in \text{Col}_\alpha^{\Lambda,\xi}\}$.
When $\text{Col}_\alpha^{\Lambda,\xi}$ is not empty, $\mathcal O_{\alpha}^{\Lambda,\xi}$ is a single block of $\mathcal O^{\mathfrak p^\xi}$(see \cite[Theorem~2]{BR}).
Let $\mathcal O^{\Lambda,\xi}:= \bigoplus_{\alpha\in\ Q_+}\mathcal O_{\alpha}^{\Lambda,\xi}.$

\begin{Theorem}\label{bkcate}\cite[Theorem~3.1]{BK} There is an $\mathfrak {sl}_I$-isomorphism $ \varphi:F(\Lambda)^{\xi}\rightarrow [\mathcal O^{\Lambda,\xi} ]$
such that $\varphi(M^\xi_A)=[M^{\xi}(A)]$ and $\varphi(L^\xi_A)=[L^{\xi}(A)]$.

\end{Theorem}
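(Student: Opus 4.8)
This is \cite[Theorem~3.1]{BK}; the plan is to reconstruct its proof in three moves — put an $\mathfrak{sl}_I$-action on $[\mathcal O^{\Lambda,\xi}]$ by exact functors, match parabolic Verma classes with the monomial basis, and then match simple classes with the dual canonical basis. I would first introduce, for each $i\in I$, the exact endofunctors $E_i,F_i$ of the parabolic category $\mathcal O^{\mathfrak p^\xi}$ obtained by tensoring with the natural $\mathfrak{gl}_n$-module and with its dual and then projecting onto the summand on which a Jucys--Murphy-type central operator acts with generalized eigenvalue $i$; these are the counterparts, under the higher-level Schur functor, of the $e_i,f_i$ of \eqref{eifunctor}--\eqref{fifunctor}. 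They preserve the decomposition $\mathcal O^{\Lambda,\xi}=\bigoplus_{\alpha}\mathcal O_\alpha^{\Lambda,\xi}$ and shift $\alpha$ by $\pm\alpha_i$. The usual projective-functor bookkeeping shows that these functors, together with the natural transformations between their composites, satisfy the Chevalley relations of $\mathfrak{sl}_I$ on the Grothendieck group, so $\bigoplus_\alpha[\mathcal O_\alpha^{\Lambda,\xi}]$ becomes an integrable $\mathfrak{sl}_I$-module with finite-dimensional weight spaces (each $\mathcal O_\alpha^{\Lambda,\xi}$ having only finitely many simple objects).

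Next I would use that $E_i$ and $F_i$ send objects with a parabolic Verma flag to objects with a parabolic Verma flag, the sections being read off by removing, resp. adding, a single box of a column-strict $(\Lambda,\xi)$-tableau — the same box rule that underlies the crystal structure of Lemma~\ref{isomofcrystal}. Hence the classes $\{[M^\xi(A)]\mid A\in\text{Col}^{\Lambda,\xi}\}$ transform under $E_i,F_i$ exactly as the monomial basis $\{M^\xi_A\}$ transforms under $e_i,f_i$ inside $F(\Lambda)^\xi=\bigwedge^{n_{(1)\xi}}V_I\otimes\cdots\otimes\bigwedge^{n_{(\ell)\xi}}V_I$. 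Since the parabolic Verma module at the ground-state tableau $A_0$ has weight $\Lambda$, is killed by every $E_i$, and (by acting with the $F_i$) generates $\bigoplus_\alpha[\mathcal O_\alpha^{\Lambda,\xi}]$, the assignment $M^\xi_{A_0}\mapsto[M^\xi(A_0)]$ propagates uniquely to an $\mathfrak{sl}_I$-isomorphism $\varphi\colon F(\Lambda)^\xi\to[\mathcal O^{\Lambda,\xi}]$ with $\varphi(M^\xi_A)=[M^\xi(A)]$ for all $A$; existence and uniqueness are forced because $F(\Lambda)^\xi$ is generated by $M^\xi_{A_0}$.

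The remaining and substantive point is $\varphi(L^\xi_A)=[L^\xi(A)]$. I would transport BGG/Serre duality: $M^\xi(A)^\vee\cong N^\xi(A)$, $L^\xi(A)^\vee\cong L^\xi(A)$, and $E_i,F_i$ commute with $\vee$, so $\vee$ pulls back along $\varphi$ to a semilinear involution of $F(\Lambda)^\xi$ that fixes $M^\xi_{A_0}$ and commutes with the $f_i$; this forces it to be the bar-involution on the tensor product of fundamental $\mathfrak{sl}_I$-modules. Because $\mathcal O^{\Lambda,\xi}$ is a highest weight category, $[L^\xi(A)]=[M^\xi(A)]+\sum_{B}c_{B,A}[M^\xi(B)]$ with $c_{B,A}\in\mathbb Z$ and $B$ strictly below $A$ in the Bruhat-type order on tableaux; thus $\varphi^{-1}([L^\xi(A)])$ is bar-invariant and unitriangular against $\{M^\xi_B\}$, so by the uniqueness lemma for such families (Lusztig's lemma) it equals the dual canonical basis element $L^\xi_A$.

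The hard part will be exactly this last step — categorifying the bar-involution and establishing the triangularity — which is where the Kazhdan--Lusztig input of \cite{BK} really enters: either directly, via the identification of parabolic category $\mathcal O$ decomposition numbers with parabolic Kazhdan--Lusztig polynomials, or via the higher-level Schur functor together with the degenerate analogue of Ariki's categorification theorem. The first two moves are formal by comparison, so I expect no serious difficulty there; all the weight is carried by the identification of simples with the dual canonical basis.
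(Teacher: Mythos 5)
The paper does not prove this statement at all: it is quoted verbatim from \cite[Theorem~3.1]{BK}, so you are reconstructing Brundan--Kleshchev's argument rather than being compared against a proof given here. Your first two moves (building an $\mathfrak{sl}_I$-action on $[\mathcal O^{\Lambda,\xi}]$ via $i$-translation functors, then matching $[M^\xi(A)]$ with $M^\xi_A$ by tracking how those functors move boxes in a parabolic Verma flag) are correct and do match how the cited result is set up. The genuine gap is in your third step. BGG duality $\vee$ induces the \emph{identity} on the Grothendieck group: it fixes every simple class $[L^\xi(A)]$, and the simple classes form a basis of $[\mathcal O^{\Lambda,\xi}]$, so $\vee_*=\mathrm{id}$ on $K_0$. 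The observation $[M^\xi(A)^\vee]=[N^\xi(A)]=[M^\xi(A)]$ just restates that a Verma and its dual have the same composition factors. So $\vee$ cannot pull back along $\varphi$ to the bar-involution on $F(\Lambda)^\xi$; it pulls back to the identity, and Lusztig's lemma has nothing to bite on. More fundamentally, the bar-involution that characterizes the dual canonical basis is $q\mapsto q^{-1}$-semilinear on the $\mathbb Z[q,q^{-1}]$-form, and there is no $q$ present on the degenerate category-$\mathcal O$ side, so a characterization by bar-invariance plus unitriangularity is simply not available after specializing $q=1$.

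The alternative you name at the end is the one that actually works, and it is what \cite[Theorem~3.1]{BK} rests on (building on \cite{Br}). On the category $\mathcal O$ side, the parabolic Kazhdan--Lusztig conjecture gives $[M^\xi(A)]=\sum_B p_{B,A}(1)\,[L^\xi(B)]$ for parabolic KL polynomials $p_{B,A}$; on the $F(\Lambda)^\xi$ side, \cite{Br} shows $M^\xi_A=\sum_B p_{B,A}(1)\,L^\xi_B$ with the \emph{same} polynomials; since $\varphi$ already matches monomials with Verma classes and both transition matrices are unitriangular with identical entries, $\varphi(L^\xi_A)=[L^\xi(A)]$ follows by inverting. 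You should replace the duality/bar-involution mechanism with this direct KL-polynomial comparison; as written, that step of your argument would fail.
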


Let $A^\xi$ be the unique element in $\text{Col}_0^{\Lambda,\xi}$. Then $L^\xi(A^\xi)= M^\xi(A^\xi)$. It is  projective, injective and hence tilting.
Let $T^{\Lambda,\xi,r}$ be  obtained from $T^{J, r}$ in \eqref{tens321} by using $L^\xi(A^\xi)$ instead of $L(\lambda_J)$.
Recall the Schur functor $\pi_{(\underline n)\xi, \underline {c_0} }$ in \eqref{defofschurpi}.

%The following result is trivial.
\begin{lemma}\label{mapofpi}
For any $A\in \text{Col}_\alpha^{\Lambda,\xi}$, let $\lambda_A=(\lambda^{(1)},\lambda^{(2)},\ldots,\lambda^{(\ell)})$, where
 $$\lambda^{(i)}=(A(1,i)-A^\xi(1,i), A(2,i)-A^\xi(2,i),\ldots,A(n_{(i)\xi},i)-A^\xi(n_{(i)\xi},i))$$
for $1\leq i\leq \ell$. Then $\lambda_A$ is an  $\ell$-partition. Moreover, if $\lambda_A\in\Lambda_{\ell,r}$ with $r\leq\min\{n_1, \ldots, n_\ell\}$, then

 \begin{enumerate} \item
$\pi_{(\underline n)\xi, \underline {c_0} }(N^\xi(A))=\tilde S^\xi((\lambda_A)')$,   \item $\pi_{(\underline n)\xi, \underline {c_0} }(L^\xi(A))$ is $D^\xi(\lambda_A)$ if $A\in\text{Std}^{\Lambda,\xi}$, and $0$, otherwise.
\end{enumerate}\end{lemma}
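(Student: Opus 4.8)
The plan is to reduce Lemma~\ref{mapofpi} to the already-established results about the $\underline c$-version of the construction by recognizing $T^{\Lambda,\xi,r}$ as a special instance of $T^{J,r}$. Concretely, the category $\mathcal O^{\mathfrak p^\xi}$ is the purely even ($\underline{c_0}$) super parabolic category $\mathcal O^{(\underline n)\xi,\underline{c_0}}$, and the tilting module $L^\xi(A^\xi)=M^\xi(A^\xi)$ plays the role of $L(\lambda_J)$: one checks that the weight $\sum_i(A^\xi(i,k)+i-1)\delta_i$ of $L^\xi(A^\xi)$ is exactly $\lambda_J$ for the interval $J$ determined by $I$ and the permuted tuple $(\underline n)\xi$, using \eqref{kjw} with all $c_i=0$. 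Hence $T^{\Lambda,\xi,r}=T^{J,r}$ for the super data $((\underline n)\xi,\underline{c_0})$, and $\pi_{(\underline n)\xi,\underline{c_0}}=\Hom_{\mathcal C_{J,(\underline n)\xi,\underline{c_0}}}(T^{J,r},-)$ is the Schur functor studied in Section~3--4.

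The next step is to identify the combinatorial bijections. Given $A\in\text{Col}_\alpha^{\Lambda,\xi}$, reading the columns of $A$ produces the $\mathfrak p^\xi$-dominant weight $\phi_{(\underline n)\xi,\underline{c_0}}(\mu)$ for a unique $\mu\in\Lambda_{J,(\underline n)\xi,\underline{c_0}}$, and the formula for $\lambda_A$ is precisely subtracting the base weight $A^\xi$, column by column, which matches the definition of $\gamma_{(\underline n)\xi,\underline{c_0}}$ in \eqref{lambdapa}: thus $\lambda_A=\gamma_{(\underline n)\xi,\underline{c_0}}(\mu)$. First I would verify that each $\lambda^{(i)}$ is weakly decreasing — this follows from the column-strictness of $A$ together with $A^\xi$ being the minimal column-strict tableau (its $i$th column is $(\text{Inf}\,I+n_{(i)\xi}-1,\ldots,\text{Inf}\,I)$ read top to bottom), so $\lambda^{(i)}_j-\lambda^{(i)}_{j+1}=A(j,i)-A(j+1,i)-1\ge 0$. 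Hence $\lambda_A$ is an $\ell$-partition, and it lies in $\Lambda_{\ell,r}^{(\underline n)\xi}$ automatically by construction.

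With these identifications in place, part (a) follows directly from Corollary~\ref{picell} applied to the super data $((\underline n)\xi,\underline{c_0})$ (noting $r\le\min\{n_i\}$ implies $r\le\min\{n_{(i)\xi}\}$, so the hypothesis of Corollary~\ref{picell} is met): we get $\pi_{(\underline n)\xi,\underline{c_0}}(N^{\mathfrak p}(\phi_{(\underline n)\xi,\underline{c_0}}(\mu)))\cong\tilde S^{\underline{c_0}}(\nu)$ where $\nu$ is the dual of $\gamma_{(\underline n)\xi,\underline{c_0}}(\mu)=\lambda_A$, i.e. $\nu=(\lambda_A)'$. Here $\tilde S^{\underline{c_0}}$ for the $\omega^\xi$-reordered parameters is exactly $\tilde S^\xi$ by the definition preceding Corollary~\ref{Hc cellular}, so $\pi_{(\underline n)\xi,\underline{c_0}}(N^\xi(A))\cong\tilde S^\xi((\lambda_A)')$. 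Part (b) follows the same way from Lemma~\ref{irrunderpi}: $\pi_{(\underline n)\xi,\underline{c_0}}(L(\phi_{(\underline n)\xi,\underline{c_0}}(\mu)))$ is zero unless $\mu$ lies in the distinguished crystal component, in which case it is $D^{\underline{c_0}}$ of the appropriate partition, which is $D^\xi(\lambda_A)$; and by Lemma~\ref{isomofcrystal}(b) the nonvanishing condition $\mu\in\Lambda^o$ translates to $A\in\text{Std}^{\Lambda,\xi}$.

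The main obstacle I anticipate is the bookkeeping needed to match the two crystal/combinatorial parametrizations: one must check carefully that the distinguished connected component $\Lambda^o$ containing $\kappa_J$ (as used in Lemma~\ref{irrunderpi}) corresponds under the bijection $\mu\mapsto\lambda_A$ exactly to $\text{Std}^{\Lambda,\xi}$ as defined via the Robinson--Schensted--Knuth condition ``$P(\gamma(A))$ has shape $\underline n'$''. This requires knowing that the crystal of $F(\Lambda)^\xi$ decomposes so that the component of the highest-weight vector $A^\xi$ is the one corresponding to $V(\Lambda)$, which is Lemma~\ref{isomofcrystal}(a), and that $\kappa_J=\phi^{-1}_{(\underline n)\xi,\underline{c_0}}(\lambda_J)$ corresponds to $A^\xi$; once this is pinned down, the identification of $\Lambda^o$ with $\text{Std}^{\Lambda,\xi}$ is immediate since both are, by definition, the component of that highest-weight element. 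Everything else is a routine unwinding of definitions, so I would keep that part terse and focus the written proof on the weight-matching $T^{\Lambda,\xi,r}=T^{J,r}$ and the identity $\lambda_A=\gamma_{(\underline n)\xi,\underline{c_0}}(\mu)$.
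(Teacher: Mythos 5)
Your proposal is correct and follows exactly the route the paper takes: identify $T^{\Lambda,\xi,r}$ with $T^{J,r}$ for the data $((\underline n)\xi,\underline{c_0})$, match $\lambda_A$ with $\gamma_{(\underline n)\xi,\underline{c_0}}$, and then invoke Corollary~\ref{picell} for part (a) and Lemma~\ref{irrunderpi} (together with Lemma~\ref{isomofcrystal}) for part (b). The paper compresses all of this to one line, so your write-up simply makes explicit the weight-matching and bookkeeping the authors left implicit.
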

\begin{proof} It is easy to see that $\lambda_A$ is an $\ell$-partition. Now (a) follows from  Corollary~\ref{picell} and  (b) follows from Lemma~\ref{irrunderpi}. \end{proof}
\begin{example}\label{exampleofp}
Assume $\underline n=(3,2,2)$ , $I=\{1,2,\ldots, N-1\}$ such that  $N\gg0$. Then   $\o=(3,2,2)$, $F(\Lambda)=\Lambda^3V_I\otimes \Lambda^2V_I\otimes \Lambda^2V_I$ and $\underline n'=(3,3,1)$.
 Let $\xi=s_1\in \mathfrak S_3$. Then    $\o^{s_1}=(2,3,2)$, $F(\Lambda)^{s_1}=\Lambda^2V_I\otimes \Lambda^3V_I\otimes \Lambda^2V_I$ and
%$$\text{Col}^{\Lambda,s_1}= \{ \mid a_i\in I_+£¬ a_1>a_2,a_3>a_4>a_5, a_6>a_7\}.$$
%$$\text{Col}^{\Lambda,1}= \{ \mid a_i\in I_+£¬ a_1>a_2>a_3, a_4>a_5, a_6>a_7\}.$$

\begin{picture}(10, 5.6)%
\put(30,-2){\makebox(0,0){ $A^{s_1}$=}}
\put(35,-4){\line(1,0){12}}
\put(35,0){\line(1,0){12}}
\put(39,4){\line(1,0){4}}
\put(35,-8){\line(1,0){12}}
\put(35,0){\line(0,-1){8}}
\put(39,4){\line(0,-1){12}}
\put(43,4){\line(0,-1){12}}
\put(47,0){\line(0,-1){8}}
\put(52.8,-2){\makebox(0,0){and}}

\put(36.8,-1.8){\makebox(0,0){2}}\put(36.8,-5.8){\makebox(0,0){1}}
\put(40.8,-1.8){\makebox(0,0){2}}\put(40.8,-5.8){\makebox(0,0){1}}
\put(44.8,-1.8){\makebox(0,0){2}}\put(44.8,-5.8){\makebox(0,0){1}}
\put(40.8,1.8){\makebox(0,0){3}}
\put(69.8,-2){\makebox(0,0){$ ~A^1=\young(3,222,111)$ ~.}}

\end{picture}\vspace{8mm}

\noindent Suppose  $\alpha = 3\alpha_2+\alpha_3$ and $\beta= \alpha_1+2\alpha_2+\alpha_3$. Pick

\begin{picture}(10, 5.6)
\put(28,-2){\makebox(0,0){  $A$=}}
\put(35,-4){\line(1,0){12}}
\put(35,0){\line(1,0){12}}
\put(39,4){\line(1,0){4}}
\put(35,-8){\line(1,0){12}}
\put(35,0){\line(0,-1){8}}
\put(39,4){\line(0,-1){12}}
\put(43,4){\line(0,-1){12}}
\put(47,0){\line(0,-1){8}}
\put(36.8,-1.8){\makebox(0,0){3}}\put(36.8,-5.8){\makebox(0,0){1}}
\put(40.8,-1.8){\makebox(0,0){3}}\put(40.8,-5.8){\makebox(0,0){1}}
\put(44.8,-1.8){\makebox(0,0){3}}\put(44.8,-5.8){\makebox(0,0){1}}
\put(40.8,1.8){\makebox(0,0){4}}
\put(59.8,-2){\makebox(0,0){$\in \text{Col}_\alpha^{\Lambda,s_1}$ and }}

\put(75,-2){\makebox(0,0){$B$=}}
\put(80,-4){\line(1,0){12}}
\put(80,0){\line(1,0){12}}
\put(84,4){\line(1,0){4}}
\put(80,-8){\line(1,0){12}}
\put(80,0){\line(0,-1){8}}
\put(84,4){\line(0,-1){12}}
\put(88,4){\line(0,-1){12}}
\put(92,0){\line(0,-1){8}}
\put(81.8,-1.8){\makebox(0,0){3}}\put(81.8,-5.8){\makebox(0,0){1}}
\put(85.8,-1.8){\makebox(0,0){3}}\put(85.8,-5.8){\makebox(0,0){2}}
\put(89.8,-1.8){\makebox(0,0){2}}\put(89.8,-5.8){\makebox(0,0){1}}
\put(85.8,1.8){\makebox(0,0){4}}
\put(100.8,-2){\makebox(0,0){$\in \text{Col}_\beta^{\Lambda,s_1}.$}}
\end{picture}\vspace{8mm}

\noindent
Then $\gamma(A)=(3,1,4,3,1,3,1)$, $M^{s_1}_A=v_3\wedge v_1\otimes v_4\wedge v_3\wedge v_1\otimes v_3\wedge v_1$, with weight $\text{wt}(M_A^{s_1})=3\delta_1+3\delta_3+\delta_4$. Moreover,  $M^{s_1}_A$  corresponds to $v_\lambda$ in \eqref{defofvlambda1} with $\lambda=(\lambda^{(1)}, \lambda^{(2)}, \lambda^{(3)})$ such that  $\lambda^{(1)}=\lambda^{(3)}=(1,0,1,0,\ldots,0)$ and $\lambda^{(2)}=(1,0,1,1,0,\ldots,0)$. We compute $P(\gamma(A))$ as follows.
$$ \emptyset  \overset{3} \leftarrow  \young(3) \overset{1} \leftarrow \young(1,3)\overset {4} \leftarrow \young(14,3)\overset {3} \leftarrow \young(13,34)\overset{1} \leftarrow \young(11,33,4)
\overset {3} \leftarrow \young(113,33,4)\overset {1} \leftarrow \young(111,333,4)=P(\gamma(A)), $$
which is of type $(3,3,1)$.
So, $A\in \text{Std}^{\Lambda,s_1}$ and $R(A)= \young(4,333,111)$.
Let $\lambda_A=(\lambda_A^{(1)}, \lambda_A^{(2)},\lambda_A^{(3)})$. Then $\lambda_A^{(1)}=\lambda_A^{(3)}=(3,1)-(2,1)=(1,0)$, $\lambda_A^{(2)}=(4,3,1)-(3,2,1)=(1,1,0)$.
Similarly, we have $\lambda_{R(A)}=((1,1),(1),(1))$.
For $B$,  we have $$P(\gamma(B))=\young(112,23,3,4)$$ which is not of type $(3,3,1)$. So,  $B\notin \text{Std}^{\Lambda,s_1}$.
\end{example}

The following result follows from  Lemma~\ref{irrunderpi} and the arguments in the proof of Theorem~\ref{main1}, immediately.
\begin{Prop}\label{cateofhcxi} Suppose $\xi\in\mathfrak S_\ell$.
Let $\psi=\pi_{(\underline n)\xi, \underline {c_0} }\circ \varphi|_{V(\Lambda)}: V(\Lambda) \rightarrow \bigoplus_{\alpha\in Q_+}[\mathscr H_{\ell,\alpha}\text{-mod }]$,
 where $\varphi$ is given  in Theorem~\ref{bkcate}. Then  $\psi$ is an $\mathfrak{sl}_I$--isomorphism. Moreover,
  \begin{enumerate} \item $\psi([S^{\xi}_A])=[\pi_{(\underline n)\xi, \underline {c_0} }(N^\xi(A))]$ if  $A\in \text{Col}_\alpha^{\Lambda,\xi}$,
  \item  $\psi([D^{\xi}_A])=[\pi_{(\underline n)\xi, \underline {c_0} }(L^\xi(A))]$ if
 $A\in \text{Std}_\alpha^{\Lambda,\xi}$.
\end{enumerate}
\end{Prop}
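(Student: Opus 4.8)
The plan is to transplant the argument from the proof of Theorem~\ref{main1}, replacing the pair of quadruples used there by the single quadruple attached to $\mathcal O^{\Lambda,\xi}$. The starting observation is that $T^{\Lambda,\xi,r}$ is precisely the module $T^{J,r}$ of \eqref{tens321} for the parity sequence $\underline{c_0}$ and the dimension vector $(\underline n)\xi$, with $A^\xi$ (equivalently $\kappa_J$ of \eqref{kaj}) occupying the leftmost tensor slot. Consequently all of Sections~3--4 — in particular Lemma~\ref{irrunderpi}, Lemma~\ref{mapofpi} and the results of \cite{BK} — apply verbatim to $\mathcal O^{\Lambda,\xi}$ and to $\pi:=\pi_{(\underline n)\xi,\underline{c_0}}=\Hom_{\mathcal O^{\Lambda,\xi}}(\bigoplus_{r}T^{\Lambda,\xi,r},-)$.

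First I would record that $\pi$ is exact, since each $T^{\Lambda,\xi,r}$ is projective (the projective module $L^\xi(A^\xi)$ tensored with the finite-dimensional $U^{\otimes r}$), and that, via the isomorphism $\Hom(T^{\Lambda,\xi,r},N)\cong\Hom(T^{\Lambda,\xi,r-1},N\otimes U^*)$ used in the proof of Theorem~\ref{main1}, the induced map $[\mathcal O^{\Lambda,\xi}]\to\bigoplus_{\alpha\in Q_+}[\mathscr H_{\ell,\alpha}\text{-mod}]$ is a homomorphism of $\mathfrak{sl}_I$-modules; it is surjective because by \cite[Corollary~2.3]{BK} the target is the simple module $V(\Lambda)$ and the image contains the nonzero highest-weight line $\pi([L^\xi(A^\xi)])$. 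Next I would pin down $\ker\pi$: by Lemma~\ref{irrunderpi}, which imposes no bound on $r=\text{ht}(\alpha)$, together with the identification in Lemma~\ref{isomofcrystal}(a) of $\text{Std}^{\Lambda,\xi}$ with the crystal component of $A^\xi$, one gets $\pi([L^\xi(A)])\neq0$ if and only if $A\in\text{Std}^{\Lambda,\xi}$; since $\{[L^\xi(A)]\mid A\in\text{Col}^{\Lambda,\xi}\}$ is a basis of $[\mathcal O^{\Lambda,\xi}]$, this forces $\ker\pi=\langle\,[L^\xi(A)]\mid A\notin\text{Std}^{\Lambda,\xi}\,\rangle$.

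Then I would compare $\pi$ with the canonical projection $\pi^\xi\colon F(\Lambda)^\xi\to V(\Lambda)$. By Theorem~\ref{bkcate} the isomorphism $\varphi$ sends $L^\xi_A$ to $[L^\xi(A)]$, and by Lemma~\ref{isomofcrystal}(b) one has $\pi^\xi(L^\xi_A)=0$ exactly for $A\notin\text{Std}^{\Lambda,\xi}$; a dimension count then gives $\varphi(\ker\pi^\xi)=\ker\pi$. Hence $\pi\circ\varphi$ and $\pi^\xi$ are surjective $\mathfrak{sl}_I$-maps out of $F(\Lambda)^\xi$ with the same kernel, so they induce the same quotient and there is a unique $\mathfrak{sl}_I$-isomorphism $\theta\colon V(\Lambda)\to\bigoplus_\alpha[\mathscr H_{\ell,\alpha}\text{-mod}]$ with $\theta\circ\pi^\xi=\pi\circ\varphi$; restricting to $V(\Lambda)$, on which $\pi^\xi$ is the identity, gives $\theta=\pi\circ\varphi|_{V(\Lambda)}=\psi$, so $\psi$ is an $\mathfrak{sl}_I$-isomorphism. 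For (a) and (b) I would transport classes: $[S^\xi_A]=\pi^\xi([M^\xi_A])$ gives $\psi([S^\xi_A])=\pi(\varphi([M^\xi_A]))=[\pi(M^\xi(A))]=[\pi(N^\xi(A))]$, the last equality using $[M^\xi(A)]=[N^\xi(A)]$ in the Grothendieck group of the highest weight category $\mathcal O^{\Lambda,\xi}$ and exactness of $\pi$; and $[D^\xi_A]=\pi^\xi([L^\xi_A])$ gives $\psi([D^\xi_A])=\pi(\varphi([L^\xi_A]))=[\pi(L^\xi(A))]$.

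The point requiring care, and the one genuine difference from Theorem~\ref{main1}, is that here the dimension vector $\underline n$ is rigidly fixed by $\omega$ and $I$ and cannot be enlarged, so the vanishing $\pi([L^\xi(A)])=0$ for $A\notin\text{Std}^{\Lambda,\xi}$ must hold for all $r=\text{ht}(\alpha)$, not merely for small $r$. This is exactly what Lemma~\ref{irrunderpi} supplies: its proof is crystal-theoretic, resting on \cite[Theorem~2.24]{BLW}, and imposes no smallness condition on $r$. Everything else is either copied from the proof of Theorem~\ref{main1} or quoted from \cite{BK,BLW}.
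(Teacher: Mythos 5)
Your proposal is correct and takes essentially the same approach as the paper, which gives only a one-line proof ("follows from Lemma~\ref{irrunderpi} and the arguments in the proof of Theorem~\ref{main1}, immediately"); your write-up simply supplies the details that the authors left implicit. Your observation that Lemma~\ref{irrunderpi}'s vanishing statement imposes no bound on $r$, and your use of $[M^\xi(A)]=[N^\xi(A)]$ in $K_0$ together with exactness of $\pi$, correctly fill in the two small gaps in the cited one-liner.
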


The following result is the second case  of our main result. The idea of the proof is similar to that of Theorem~\ref{main1}. The difference   is that we use the $\mathfrak{sl}_I$-isomorphism $F(\Lambda)\cong F(\Lambda)^\xi$
 and the isomorphism of crystals of the irreducible $\mathfrak{sl}_I$-module $V (\Lambda) $ (which is also the unique irreducible direct summand of $F(\Lambda )^\xi$ with highest weight $\Lambda$) in Lemma~\ref{isomofcrystal}(e).

\begin{Theorem}\label{main2} $D^\xi(\lambda_A)\cong D^1(\lambda_{R(A)})$ for any $\xi\in\mathfrak S_\ell$ and $A\in\text{Std}^{\Lambda,\xi} $. \end{Theorem}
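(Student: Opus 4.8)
The plan is to follow the proof of Theorem~\ref{main1} almost verbatim, with two substitutions: the $\mathfrak{sl}_J$-isomorphism $\bigwedge^{\underline n,\underline c}V_J\cong\bigwedge^{\tilde{\underline n},\underline{c_0}}V_J$ used there is replaced by the $\mathfrak{sl}_I$-isomorphism $F(\Lambda)^\xi\cong F(\Lambda)$ obtained by permuting the tensor factors in \eqref{wedgemodule}, and the combinatorial bijection $\eta$ is replaced by the crystal isomorphism $R\colon\text{Std}^{\Lambda,\xi}\to\text{Std}^{\Lambda,1}$ of Lemma~\ref{isomofcrystal}. First I would enlarge the interval $I$, hence each $n_i=\omega_i-\text{Inf}\,I+1$, so that $r\le\min\{n_1,\dots,n_\ell\}$ for the integer $r$ with $\lambda_A\in\Lambda_{\ell,r}$; this is harmless because $\mathscr H_{\ell,r}$ with parameter $\omega^\xi$, the simple modules $D^\xi(\lambda_A)$ and $D^1(\lambda_{R(A)})$, the tableau $R(A)$, and the $\ell$-partitions $\lambda_A,\lambda_{R(A)}$ are all unchanged under a shift of $\text{Inf}\,I$, being defined by differences of entries and by the Robinson--Schensted--Knuth correspondence.

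By Proposition~\ref{cateofhcxi} applied to $\xi$ and to the identity element of $\mathfrak S_\ell$ we obtain two $\mathfrak{sl}_I$-isomorphisms $\psi^\xi$ and $\psi^1$ from $V(\Lambda)$ to $\bigoplus_{\alpha\in Q_+}[\mathscr H_{\ell,\alpha}\text{-mod}]$, onto the \emph{same} target, which is independent of $\xi$. Hence $(\psi^1)^{-1}\circ\psi^\xi$ is an $\mathfrak{sl}_I$-endomorphism of $V(\Lambda)$; since it preserves weight spaces, the $\Lambda$-weight space is one-dimensional, and $V(\Lambda)$ is generated by its highest weight vector, it is a scalar multiple of the identity. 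To see that the scalar is $1$, I would evaluate on the highest weight vector of $V(\Lambda)$, which is the dual-canonical-basis vector indexed by the unique element $A^\xi$ of $\text{Col}_0^{\Lambda,\xi}$ (resp.\ $A^1$ of $\text{Col}_0^{\Lambda,1}$): by Proposition~\ref{cateofhcxi}(b) and Lemma~\ref{mapofpi}(b), $\psi^\xi$ sends it to $[\pi_{(\underline n)\xi,\underline{c_0}}(L^\xi(A^\xi))]=[D^\xi(\lambda_{A^\xi})]=[D^\xi(\emptyset)]$ and $\psi^1$ sends it to $[D^1(\emptyset)]$, and since $\lambda_{A^\xi}=\lambda_{A^1}=\emptyset$ both classes equal that of the trivial $\mathscr H_{\ell,0}=\mathbb C$-module. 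Therefore $\psi^\xi=\psi^1$.

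Now fix $A\in\text{Std}^{\Lambda,\xi}$. Then $R(A)\in\text{Std}^{\Lambda,1}$ and $\text{wt}(M^1_{R(A)})=\text{wt}(M^\xi_A)$, so $\lambda_{R(A)}$ is an $\ell$-partition of the same $r$. By Lemma~\ref{isomofcrystal}(e), $D^\xi_A=D^1_{R(A)}$ as elements of the dual canonical basis of $V(\Lambda)$. Applying $\psi^\xi=\psi^1$ together with Proposition~\ref{cateofhcxi}(b) and Lemma~\ref{mapofpi}(b) gives $[D^\xi(\lambda_A)]=\psi^\xi([D^\xi_A])=\psi^1([D^1_{R(A)}])=[D^1(\lambda_{R(A)})]$ in $\bigoplus_{\alpha\in Q_+}[\mathscr H_{\ell,\alpha}\text{-mod}]$. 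Both $D^\xi(\lambda_A)$ and $D^1(\lambda_{R(A)})$ are nonzero simple $\mathscr H_{\ell,r}$-modules, nonzero because $A$ and $R(A)$ are standard by Lemma~\ref{mapofpi}(b) and Lemma~\ref{isomofcrystal}(b), so equality of their classes in the Grothendieck group forces $D^\xi(\lambda_A)\cong D^1(\lambda_{R(A)})$, which is the assertion.

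The hard part will be purely bookkeeping, exactly as in the proof of Theorem~\ref{main1}: I must make sure that the Schur functors $\pi_{(\underline n)\xi,\underline{c_0}}$ for varying $\xi$ genuinely have the common target $\bigoplus_{\alpha\in Q_+}[\mathscr H_{\ell,\alpha}\text{-mod}]$; that the tensor-factor-permuting isomorphism $F(\Lambda)^\xi\cong F(\Lambda)$ can be normalized to restrict to the canonical identity on the unique highest-weight direct summand $V(\Lambda)$, and that this is the normalization built into $\psi^\xi$ and $\psi^1$; and that the identification $D^\xi_A=D^1_{R(A)}$ of Lemma~\ref{isomofcrystal}(e) is compatible with these choices. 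No new idea beyond those used for Theorem~\ref{main1} should be required.
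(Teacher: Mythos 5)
Your proposal is correct and follows essentially the same route as the paper's proof: apply Proposition~\ref{cateofhcxi} to $\xi$ and to $1$, observe that the two resulting $\mathfrak{sl}_I$-isomorphisms coincide because they agree on the highest weight vector $[D^\cdot(\emptyset)]$, and then transfer the dual-canonical-basis identity $D^\xi_A=D^1_{R(A)}$ of Lemma~\ref{isomofcrystal}(e) through these maps to obtain $D^\xi(\lambda_A)\cong D^1(\lambda_{R(A)})$. Your explicit remark about enlarging $I$ so that $r\le\min\{n_1,\dots,n_\ell\}$ is a small but welcome point of hygiene that the paper leaves implicit.
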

\begin{proof}

By Proposition~\ref{cateofhcxi}, there are two  $\mathfrak {sl}_I$-isomorphisms
$$
\varphi^\xi, \varphi^1 : V(\Lambda)\rightarrow\bigoplus_{\alpha\in Q_+}[\mathscr H_{\ell,\alpha}\text{-mod }]
$$
where $\varphi^\xi=\pi_{(\underline n)^\xi, \underline {c_0} }\circ \varphi|_{V(\Lambda)} $ and $\varphi^1=\pi_{\underline n, \underline {c_0} }\circ \varphi|_{V(\Lambda)}$.
This implies the existence of an  $\mathfrak {sl}_I$-automorphism
$$ \varphi^1\circ(\varphi^\xi)^{-1} : \bigoplus_{\alpha\in Q_+}[\mathscr H_{\ell,\alpha}\text{-mod }]\rightarrow \bigoplus_{\alpha\in Q_+}[\mathscr H_{\ell,\alpha}\text{-mod }].$$
Note that $\bigoplus_{\alpha\in Q_+}[\mathscr H_{\ell,\alpha}\text{-mod }]\cong V(\Lambda)$.
By Proposition~\ref{cateofhcxi}(b), $$ \varphi^1\circ(\varphi^\xi)^{-1}([D^\xi(\emptyset )] )=[D^\xi(\emptyset)]=[D^1(\emptyset)],$$
where $[D^\xi(\emptyset)]$ is the unique highest weight vector of $\bigoplus_{\alpha\in Q_+}[\mathscr H_{\ell,\alpha}\text{-mod }]$.
So, $\varphi^1\circ(\varphi^\xi)^{-1}=1$, the identity map. Assume that $ D^\xi(\lambda_A)\cong D^1(\mu)$ for some $\ell$-multipartition $\mu$.
Then $(\varphi^\xi)^{-1}([D^\xi(\lambda_A)])=D_{\lambda_A}^\xi $ and $(\varphi^1)^{-1}([D^1(\mu)])=D_{\mu}^1 $. Since $\varphi^1\circ(\varphi^\xi)^{-1}=1$, we have $D_{\lambda_A}^\xi=D_{\mu}^1$. By
Lemma~\ref{isomofcrystal}(e), $\mu =\lambda_{R(A)}$.
\end{proof}

\begin{rem} When $\xi$ is the longest element of $\mathfrak S_\ell$, we have   $\omega^{\xi}=(\omega_\ell,\omega_{\ell-1}, \ldots,\omega_1)$.  In this case,    Theorem~\ref{main2} is \cite[Theorem~4.15(ii)]{BK}, whose proof depends on  Arkhipov's twisting functor
 to relate $M^{\xi}(A)$ with  $N^1(A)$. In general,  we do not know  whether one can use   Arkhipov's twisting functor to relate  $M^{\xi}(A)$ and $N^1(A)$.
 Our point is to use explicit descriptions on highest weight vectors of  $L^\xi(A^\xi)\otimes V^{\otimes r}$ (see  Theorem~\ref{hiofcyche})  to establish explicit relationships between parabolic Verma modules and cell modules. Finally, by Theorem~\ref{main1} and Theorem~\ref{main2},
 we know explicit relationships between simple modules defined via various cellular bases in Corollary~\ref{Hc cellular}.  When $\underline c=1^\ell$ and  $\xi$
 is the longest element in $\Sym_\ell$, the results of Theorem~3.15 and Theorem~3.21 (for Ariki-Koike algebras) yield  the generalized Mullineux involution in \cite{JL} when  $q$ is not a root of unity.
In this  case,
the relationship between $D^{\underline {c_0}}(\lambda)$ and $D^{\underline c}(\mu)$ given in Theorem~3.15 is the
first step in \cite[4.4]{JL}, and the relationship between $D^{\underline {c_0}}(\lambda)$ and $D^{\xi}(\mu)$ given in Theorem~3.21 is the second step in \cite[4.4]{JL}.
\end{rem}

\end{document}